\def\e{{\varepsilon}}
\def\beq{\begin{equation}}
	\def\eeq{\end{equation}}
\def\e{\epsilon}
\newcommand{\cC}{{\mathcal C}}
\newcommand{\cD}{{\mathcal D}}
\newcommand{\cW}{{\mathcal W}}
\newcommand{\cWcu}{{\mathcal W}^{cu}}
\newcommand{\cWcs}{{\mathcal W}^{cs}}
\newcommand{\cWc}{{\mathcal W}^{c}}
\newcommand{\cPH}{{\mathcal{PH}}}
\newcommand{\cU}{{\mathcal U}}
\newcommand{\cO}{{\mathcal O}}
\newcommand{\wt}{\widetilde}
\newcommand{\ol}{\overline}
\newcommand{\Z}{{\mathbb Z}}
\newcommand{\R}{{\mathbb R}}
\newcommand{\T}{{\mathbb T}}
\newcommand{\N}{{\mathbb N}}
\newtheorem{thm}{Theorem}[section]
\newtheorem{rem}[thm]{Remark}
\newtheorem{lem}[thm]{Lemma}
\newtheorem{df}[thm]{Definition}
\newtheorem{prop}[thm]{Proposition}
\newtheorem{cor}[thm]{Corollary}
\newtheorem{claim}[thm]{Claim}
\newtheorem*{main thm}{Main Theorem}
\newtheorem*{main thm bis}{Main Theorem (alternate version)}
\newtheorem{theoalph}{Theorem}
\providecommand{\norm}[1]{\lVert#1\rVert}
\begin{document}
\numberwithin{equation}{section}
	
\title[Some hyperbolicity revisited]{Some hyperbolicity revisited and robust transitivity}

\author[Luis Pedro Pi\~{n}eyr\'ua]{Luis Pedro Pi\~{n}eyr\'ua$^{*}$}
\thanks{$^{*}$L.P.P was partially supported by CAP's doctoral scholarship and CSIC group 618}


\subjclass[2010]{Primary: 37D30. Secondary: 37B05.
}

\begin{abstract}
	In this article we revisit the notion of \textit{Some Hyperbolicity} introduced by Pujals and Sambarino in \cite{PuSa}. We present a more general definition, that in particular can be applied to the symplectic context (something that the previous couldn't). As an application we construct $C^1$ robustly transitive derived from Anosov diffeomorphisms with mixed behaviour on center leaves.
\end{abstract}
	
\maketitle

\section{Introduction}
In short, dynamical system theory is the study of motion and we want to understand the behaviour of most orbits. Typically the structure of the orbits is very complicated, for example in some cases there are orbits that almost fill the whole space, making it indecomposable from the dynamical point of view. That is what is called \textit{transitivity}: a dynamical system is said to be \textit{transitive} if it has a dense forward orbit. Even more interesting are the systems that present a dynamical feauture that is stable or robust (meaning that it persists under perturbation). We say that a dynamical system is \textit{robustly transitive}, if there is a neighbourhood of the system (in some particular topology) such that every system in this neighbourhood is transitive.

The first example of a $C^1$ robustly transitive diffeomorphism was given by D. A. Anosov in \cite{An}, where he proved that uniformly hyperbolic diffeomorphisms (today called Anosov diffeomorphisms) are stable under $C^1$ perturbations. As a corollary every transitive Anosov diffeomorphism, is in fact $C^1$ robustly transitive. Years later M. Shub \cite{Sh} constructed the first non-Anosov $C^1$ robustly transitive diffeomorphism on the torus $\T^4$ and a few years later R. Ma\~{n}\'e improved this result and introduced an example on $\T^3$ \cite{Ma1}. Both Shub's and Ma\~{n}\'e's examples are isotopic to linear Anosov diffeomorphisms and by that reason they're called \textit{derived from Anosov} examples (from now on DA diffeomorphisms). Another way to construct $C^1$ robustly transitive diffeomorphisms was introduced by C. Bonatti and L. D\'iaz in \cite{BD}. Their technique is based on the existence of some particular hyperbolic subsets called \textit{blenders}. With this geometric approach, the authors were able to build examples $C^1$-close to time-$t$ maps of Anosov flows (hence, isotopic to the identity) as well as examples $C^1$-close to the product of Anosov times the identity (therefore, with trivial action on the center). All these non-hyperbolic examples are partially hyperbolic (although there are also $C^1$ robustly transitive examples that are not partially hyperbolic \cite{BV}).

In \cite{PuSa} Pujals and Sambarino introduced the SH Property (\textit{Some hyperbolicity}) for partially hyperbolic diffeomorphisms. This property, which is $C^1$ robust, in addition to minimality of the strong stable foliation implies $C^1$ robust minimality of the strong stable foliation, therefore $C^1$ robust transitivity. As an application of this approach, they re-obtained the examples of Shub and Ma\~{n}\'e. 

Our contribution in this article is the introduction of a more general concept of SH Property, that we called \textit{SH-Saddle property}. This new definition is a natural generalization of the previous SH definition and as a consequence it can be applied to a larger number of cases. In particular, it has the advantage of being applicable in the symplectic context (something that the previous definition couldn't). 

Let us be more precise. We say that a diffeomorphism $f:M \to M$ is \textit{partially hyperbolic} if there exists a nontrivial $Df$-invariant splitting $TM=E_f^{ss} \oplus E_f^c \oplus E_f^{uu}$ of the tangent bundle and numbers $\lambda _s,\lambda _c^-, \lambda _c^+, \lambda_u$ with $\lambda_s <1< \lambda_u$ and $\lambda_s < \lambda_c^- < \lambda_c^+< \lambda_u$ such that: 
\begin{equation*}
	\|D f_x|_{E^{ss}_f}\|<\lambda_s, \qquad  \lambda_c^- <\  \|D f_x|_{E^c_f}\|< \lambda_c^+ , \qquad  \lambda_u   <\  m(D f_x|_{E^{uu}_f}). 
\end{equation*}
We will denote by $\cPH(M)$ to the set of all partially hyperbolic diffeomorphisms of $M$. It is well known that the strong bundles $E_f^{uu}$ and $E_f^{ss}$ integrate into unique invariant foliations $\cW^{uu}_f$ and $\cW^{ss}_f$ respectively, called the \textit{strong unstable} and \textit{strong stable} foliations \cite{HPS}. For $* = uu,ss$, and  for any $x \in M$, we denote by $\cW_f^*(x)$ the leaf of $\cW_f^*$ through $x$. In the following, for any $*\in \{ss,uu\}$, we denote by $d_{\mathcal{W}_f^*}$ the leafwise distance, and for any $x \in M$ and for any  $\epsilon>0$, we denote by 
$\cW_f^*(x,\varepsilon):=\{y \in \cW_f^*(x): d_{\cW_f^*}(x,y)< \varepsilon\}$ the $\varepsilon$-ball in $\cW_f^*$ of center $x$ and radius $\varepsilon$. 

Now given a $\mathbb{R}$-vector space $V$ with an inner product, we say that a \textit{cone} in $V$ is a subset $\cC$ such that there is a non-degenerate quadratic form $B:V \to \mathbb{R}$ such that $\cC=\{v\in V: B(v)\leq 0\}$. Analogously we can express the cone $\cC$ according to a decomposition $V=E\oplus F$: 
\begin{equation*} \label{eqcono1}
	\cC=\{v=(v_E,v_F): \norm{v_E}\leq \theta \norm{v_{F}}\}
\end{equation*} for some $\theta>0$. In this case we observe that $B(v)=-\theta^2\norm{v_{F}}^2+\norm{v_{E}}^2$. We are going to say that the number $\theta$ in the equation above is the \textit{size} of the cone. In some cases we will note by $\cC_{\theta}$ instead of $\cC$ to make emphasis on the size of $\cC$. The \textit{dimension of a cone} is the maximal dimension of any subspace contained in the cone. 

Finally, given $f\in \cPH(M)$, we are going to say that a \textit{$d$-center cone} in $x\in M$ is simply a cone $\cC(x)$ in $E^{c}_f(x)$ of dimension $d\leq \text{dim}E^c_f$. 

We now introduce the main definition of the article.

\begin{df}[SH-Saddle property for unstable foliations] \label{defshuu}
	Given $f\in \cPH(M)$ we say that the strong unstable foliation $\cW^{uu}_f$ has the $SH$-Saddle property of index $d\leq \textnormal{dim}E^c_f$ if there are constants $L>0$, $\theta>0$, $\lambda_0>1$ and $C>0$ such that the following hold. For every point $x\in M$, there is a point $y\in \cW^{uu}_f(x,L)$ such that:
	\begin{enumerate}
		\item \label{sh1} There is a $d$-center cone field of size $\theta$ along the forward orbit of $y$ which is $Df$-invariant, i.e. there exist $\cC^u_{\theta}(f^l(y))\subset E^c_f(f^l(y))$ such that $Df(\cC^u_{\theta}(f^l(y)))\subset \cC^u_{\theta}(f^{l+1}(y))$ for every $l\geq 0$. \label{c1shu}
		\item $\norm{Df^n_{f^l(y)}(v)}\geq C\lambda_0^n\norm{v}$  for every $v\in \cC^u_{\theta}(f^{l}(y))$ and every $l,n\geq0$. \label{c2shu}
	\end{enumerate}
\end{df}

Notice that if the strong unstable foliation has SH-Saddle property of index $d=\text{dim}E^c_f$, we get the original definition of SH Property introduced in \cite{PuSa}. The only difference is that we express the uniform expanding behaviour in the center bundle in terms of a cone criterion. This allows us to treat the case where there is expansion in a subbundle of the center subspace instead of the whole center bundle. Moreover since properties that are presented in terms of cones are in general robust in the $C^1$ topology, we are able to prove that this new SH-Saddle property is $C^1$ open among partially hyperbolic diffeomorphisms (Theorem \ref{shisopen}). 

We can make an analogous definition of SH-Saddle property for the strong stable foliation. In this case we ask for the invariance of the cones for the past. 
\begin{df}[SH-Saddle property for stable foliations] \label{defshss}
	Given $f\in \cPH(M)$ we say that the strong stable foliation $\cW^{ss}_f$ has the $SH$-Saddle property of index $d\leq \textnormal{dim}E^c_f$ if there are constants $L>0$, $\theta>0$, $\lambda_{0}>1$ and $C>0$ such that the following hold. For every point $x\in M$, there is a point $y\in \cW^{ss}_f(x,L)$ such that: 
	\begin{enumerate}
		\item There is a $d$-center cone field of size $\theta$ along the backward orbit of $y$ which is $Df^{-1}$-invariant, i.e. there exist $\cC^s_{\theta}(f^l(y))$ such that $Df^{-1}(\cC^s_{\theta}(f^l(y)))\subset \cC^s_{\theta}(f^{l-1}(y))$  for every $l \leq 0$. \label{c1shs}
		\item $\norm{Df^n_{f^l(y)}(v)}\geq C\lambda_0^{-n}\norm{v}$  for every $v\in \cC^s_{\theta}(f^l(y)))$ and every $l,n\leq 0$. \label{c2shs}
	\end{enumerate}
\end{df}

With this new approach we first give a sufficient condition for a DA diffeomorphism to be $C^1$ robustly transitive (Theorem \ref{teorobtran}). As an application of this result, we are able to build new $C^1$ robustly transitive DA diffeomorphisms, in particular with any center dimension and with as many different behaviours on center leaves as desire. Moreover, these examples can be made in a way such that they have mixed behaviour on center leaves. In particular they present a dominated splitting that is not coherent with the hyperbolic splitting of their linear Anosov part, a difference with its predecessors DA examples (\cite{BD}, \cite{BV}, \cite{Ma1} \& \cite{Sh}).

\begin{theoalph} \label{teoejemixed}
	Let $n\geq 4$, let $A\in \textnormal{SL}(n,\Z)$ be a hyperbolic symmetric matrix with a splitting of the form $\R^n=E^{ss}_A\oplus E^{ws}_A\oplus E^{wu}_A \oplus E^{uu}_A$. Denote by $E^c_A=E^{ws}_A\oplus E^{wu}_A$ and let $k=\textnormal{dim}E^c_A\geq 2$. Then there exist a $C^1$ robustly transitive partially hyperbolic diffeomorphism $f:\T^n\to \T^n$, isotopic to $A$  with a splitting of the form $T\T^n=E^{ss}_f\oplus E^c_f\oplus E^{uu}_f$ such that $\textnormal{dim}E^*_f=\textnormal{dim}E^*_A$ for $*=ss,c,uu$, and with $k+1$ fixed points $p_0,p_1,\dots,p_k$ such that: 
	$\text{index}(p_j)=j+\textnormal{dim}E^{ss}_A$ for every $j=0,\dots,k$.
	
	Moreover the center bundle $E^c_f$ does not admit a dominated splitting. In particular the splitting of $f$ is not coherent with the hyperbolic splitting of $A$. 
\end{theoalph}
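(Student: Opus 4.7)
The plan is to construct $f$ as a $C^1$-small perturbation of $A$ supported in a small ball $B$ around the origin that creates the $k+1$ prescribed fixed points along the center leaf through $0$, while keeping cone fields for $E^{ss}_A$ and $E^{uu}_A$ invariant. A natural model is $f = \phi \circ A$, where $\phi$ is the identity outside $B$ and, inside $B$, in coordinates aligned with the $A$-invariant splitting, acts as the identity on the strong bundles and as a fixed-point-creating perturbation on the center. Writing the center as $\R^k \simeq \R^{\dim E^{ws}_A} \oplus \R^{\dim E^{wu}_A}$, one realizes the saddles $p_0, \ldots, p_k$ through a cascade of DA-type bifurcations along the $k$ center coordinates, arranged so that the Morse index of the center linearization at $p_j$ equals $j$. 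Because the perturbation is $C^1$-small and preserves the strong cone fields, partial hyperbolicity of $f$ is automatic and $\dim E^*_f = \dim E^*_A$ for $* = ss, c, uu$.

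To obtain $C^1$ robust transitivity via Theorem~\ref{teorobtran}, I would verify the SH-Saddle property for both $\cW^{uu}_f$ and $\cW^{ss}_f$. The key observation is that every strong-unstable leaf of $f$ meets the complement of $B$ in a large set, since the strong unstable foliation of $A$ is minimal and the perturbation is localized; hence for every $x \in M$ there is $x^u \in \cW^{uu}_f(x, L)$ whose forward orbit spends a positive density of iterations in $M \setminus B$, where $f$ coincides with $A$ and the center behavior is hyperbolic and uniform. A Pliss-type selection combined with the preservation of a suitable $Df$-invariant center cone field then yields the SH-Saddle property of index $d = \dim E^{wu}_A$ for $\cW^{uu}_f$, and symmetrically for $\cW^{ss}_f$ with $d = \dim E^{ws}_A$. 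Together with the minimality of the strong foliations (robustly inherited by small perturbations of $A$), Theorem~\ref{teorobtran} provides $C^1$ robust transitivity.

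For the non-existence of a dominated splitting on $E^c_f$, the strategy is to use the variety of fixed points together with a direct linear-algebraic obstruction. A non-trivial dominated splitting $E^c_f = F_1 \oplus F_2$ would restrict at each $p_j$ to an invariant splitting of $E^c_f(p_j)$ into a $d_1$-dimensional subspace of eigenvectors of strictly smaller modulus than those of the $d_2$-dimensional complement. The construction of the fixed points can be carried out so that, for every possible dimension $d_1 \in \{1, \ldots, k-1\}$, no continuous invariant selection matching this prescription at all $p_j$ exists; iterating the argument over a hypothetical finest dominated splitting reduces to ruling out a decomposition of $E^c_f$ into one-dimensional invariant subbundles, which can be obstructed by installing a pair of complex conjugate eigenvalues in the center linearization at one of the intermediate saddles (refining the local perturbation inside the center plane spanned by two coordinates). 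Since the robust transitivity of $f$ places all the $p_j$ in a common chain recurrence class, any hypothetical global dominated splitting of $E^c_f$ would extend through all of them, giving the contradiction.

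The main obstacle is the simultaneous control of all three requirements: the perturbation must be large enough to create the prescribed fixed points and to install the complex-eigenvalue obstruction at some intermediate saddle, yet small enough in $C^1$ norm to preserve the strong cone fields and the SH-Saddle property. The most delicate step is the cone analysis underlying the SH-Saddle condition, as it requires exhibiting, for every $x \in M$, a point $x^u \in \cW^{uu}_f(x,L)$ whose forward orbit admits a $Df$-invariant center cone of uniformly expanding vectors of the prescribed dimension. This is expected to follow from Pliss-type selection along $\cW^{uu}_A$-dense orbits outside $B$ together with quantitative bounds on the perturbation, but balancing these estimates against the non-domination requirement is where the bulk of the technical work lies.
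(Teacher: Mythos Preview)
Your proposal has a genuine gap: you never verify the hypothesis $\Lambda(f)<\rho(f)$ of Theorem~\ref{teorobtran}. That theorem does not use minimality of the strong foliations at all; its input is the SH-Saddle property \emph{together with} a bound on the diameter of the fibers of the Franks semiconjugacy $H_f$. You establish (or at least sketch) the SH-Saddle part, but say nothing about $\Lambda(f)$, and for a map that has already had its center eigenvalues pushed across $1$ at several fixed points there is no reason to expect $\Lambda(f)$ to be small. Relatedly, your framing of $f$ as a ``$C^1$-small perturbation of $A$'' cannot be right: producing fixed points whose center index runs over the full range $0,\dots,k$ forces center eigenvalues to cross $1$, which is not a $C^1$-small change.

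The paper resolves exactly the tension you flag in your last paragraph by a two-step construction that you are missing. First it builds an \emph{intermediate} map $f_1$, equal to $A$ outside finitely many small balls, whose derivative on the center at each chosen fixed point is the identity, and which is carefully engineered to be expansive (hyperbolic off the fixed set). Expansivity gives $\Lambda(f_1)=0$ for free, so Theorem~\ref{teorobtran} yields a whole $C^1$-neighborhood $\mathcal U$ of $f_1$ consisting of transitive maps. Only then, in a second step, does one use Franks' Lemma to adjust the derivatives at the fixed points \emph{inside} $\mathcal U$: since $Df_1|_{E^c}=\mathrm{Id}$ at those points, arbitrarily $C^1$-small perturbations suffice to realize any index between $\dim E^{ss}_A$ and $\dim E^{ss}_A+k$ and to install a complex center eigenvalue. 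This decoupling is precisely what lets one avoid the ``simultaneous control'' problem you identify.

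Two smaller points. For the SH-Saddle property the paper uses a much simpler mechanism than your Pliss-type selection: since $f_1=A$ off a small set $U$ and the strong-unstable direction is very expanding, one finds by an elementary nested-disk argument a point in every bounded $\cW^{uu}$-leaf whose entire forward orbit avoids $U$, so the center cone and expansion are literally those of $A$. And for the non-domination of $E^c_f$, your idea of a complex eigenvalue at one fixed point is exactly what the paper does; the elaborate obstruction over all possible dimensions $d_1$ is unnecessary.
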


We remark that in the theorem above $E^c_A$ is strictly hyperbolic but the proof for the case where $E^c_A$ is entirely contracting or expanding works as well. In these last cases, the result is basically contained in \cite{PuSa} with the difference of dealing with the minimality of the strong unstable foliation instead of transitivity.

Let us mention that in his PhD thesis R. Potrie \cite{Po} (page 152) constructed a $C^1$ robustly transitive example on $\T^3$ but in this case, the example's dominated splitting is not coherent with its Anosov part, although the definition of partial hyperbolicity here is a bit different. Recently P. Carrasco and D. Obata showed in \cite{CaOb} that the example introduced in \cite{BC} is $C^1$ robustly transitive. This example although it is a skew product on $\T^4$, it has the particularity of having mixed behaviour on the center (which is two-dimensional) and therefore makes it a new example. The authors mention in the paper that their example can't have the SH Property (the original version). However, it follows directly from the calculations in their article, that the example has the SH-Saddle property. Therefore, at the moment every known example of $C^1$ robustly transitive partially hyperbolic diffeomorphism verifies the SH-Saddle property of some index.

Besides the examples in the Theorem \ref{teoejemixed}, we also present two additional examples of $C^1$ robustly transitive partially hyperbolic diffeomorphisms. These examples are in a sense similar to the ones in Theorem \ref{teoejemixed} but with a different flavor. The first one has the particularity of being symplectic and it has the SH-Saddle property. Recall that the original SH property is incompatible with being symplectic, so this example shows that SH-Saddle property is useful in the symplectic context. The second example is different to the last ones since the set of points of the manifolds where hyperbolicity fails is not localized in small neighborhoods of fixed points.  

Finally let us mention that the \textit{robust} minimality of the strong foliations, which is the main purpose of the original SH definition in \cite{PuSa}, is out of reach, since the same strategy Pujals and Sambarino made is not adaptable to the saddle case, and some new approach is needed. In fact, just the minimality of the strong foliations for a DA diffeomorphism, like the ones in Theorem \ref{teoejemixed}, is not easy to get, since even for true Anosov diffeomorphisms this is a very difficult problem. Recently it was announced by Avila-Crovisier-Eskin-Potrie-Wilkinson-Zhang that $\mathcal{W}^{uu}$ is minimal for every $C^{1+\alpha}$ Anosov diffeomorphism of $\T^3$.

We end this introduction by presenting a few questions that at the moment we don't know the answers. 
\newline \,
\newline
\textbf{Question 1.} \textit{Is it possible to obtain a criterion for the $C^1$ robust minimality of the strong foliations in the SH-Saddle case, as the one obtained by Pujals and Sambarino in \cite{PuSa}?}
\newline \,
\newline
\textbf{Question 2.} \textit{Does every $C^1$ robustly transitive partially hyperbolic diffeomorphism verify the SH-Saddle property of some index?}
\newline\,
\newline 
\textbf{Question 3.} \textit{Does transitivity in addition to SH-Saddle property imply $C^1$ robust transitivity?}
\newline

\subsection*{Organization of the paper} 
In Section \ref{sectionshproperty} we prove tha the SH-Saddle property is a $C^1$ open condition among partially hyperbolic diffeomorphisms. 
In Section \ref{sderivedfromanosov} we present a criterion for DA diffeomorphisms that guarantees $C^1$ robust transitivity. Finally in Section \ref{ssymplexample} we apply the previous results to build new DA examples and prove Theorem \ref{teoejemixed}. 

\section*{Acknowledgments}

This work is part of the author's PhD thesis, and he would like to thank his advisors Rafael Potrie and Mart\'in Sambarino, for various discussions and suggestions about this work. The author also wish to thank Enrique Pujals for many conversations as well as the hospitality of the Graduate Center of the CUNY, where this work begun. Finally the author wish to thank the anonymous referees for their careful reading and suggestions that helped us to improve the article.

\section{SH-Saddle property is $C^1$ open} \label{sectionshproperty} 

The definitions \ref{defshuu} and \ref{defshss} of SH-Saddle property are given for the strong unstable and strong stable foliations respectively. In many parts of the article we'll need the presence of the two simultaneously, thus for simplicity we give the following definition of SH-Saddle property for diffeomorphisms by grouping together these two. 

\begin{df}[SH-Saddle property for diffeomorphisms] \label{defshf}
	We say that $f\in \cPH(M)$ has $(d_1,d_2)$ SH-Saddle property if the following conditions hold:
	\begin{enumerate} 
		\item $\cW^{ss}_f$ has the $SH$-Saddle property of index $d_1$.
		\item $\cW^{uu}_f$ has the $SH$-Saddle property of index $d_2$.
	\end{enumerate}
\end{df}	

\begin{rem}
	Notice that not necessarily we have $d_1+d_2=\textnormal{dim}E^c_f$, in fact in many cases we are going to have $d_1+d_2<\textnormal{dim}E^c_f$. For simplicity in some parts of the article, we are going to omit the indexes $(d_1,d_2)$ and we're just going to say that a partially hyperbolic diffeomorphism has the SH-Saddle property. 
\end{rem}

\begin{rem}
	The SH-Saddle property does not depend on the choice of the Riemannian metric.
\end{rem}




In consequence of the previous remark we get the following fact.

\begin{prop} \label{fshifffksh}
	A partially hyperbolic diffeomorphism $f$ has the SH-Saddle property if and only if $f^N$ has the SH-Saddle property for some $N\in \N$.
\end{prop}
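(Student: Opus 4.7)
The plan rests on the observation that $f$ and $f^N$ share the same invariant splitting $TM=E^{ss}\oplus E^c\oplus E^{uu}$ and hence the same strong foliations $\cW^{ss}$ and $\cW^{uu}$; the geometric framework of the SH-Saddle definition is identical for the two diffeomorphisms.

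The easy implication $f\Rightarrow f^N$ is by restriction: for each $x$ take the same point $x^u\in\cW^{uu}_f(x,L)$ and sub-sample the $Df$-invariant cone field to $\{\cC^u_\theta(f^{Nl}(x^u))\}_{l\ge 0}$. It inherits $Df^N$-invariance, the same uniform size $\theta$, the same constant $C$, and expansion rate $\lambda_0^N>1$. The stable case is symmetric.

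For the converse $f^N\Rightarrow f$, the plan is to extend the invariant cone field from the $f^N$-orbit of the provided point $x^u$ to its full $f$-orbit by pushforward:
\begin{equation*}
\tilde\cC(f^k(x^u)):=Df^j\!\left(\cC^u_\theta(f^{Nq}(x^u))\right)
\qquad\text{for }k=Nq+j,\ 0\le j<N.
\end{equation*}
$Df$-invariance is tautological when $j+1<N$ and becomes precisely the $Df^N$-invariance hypothesis when $j+1=N$; each cone is $d$-dimensional as the linear image of such. For expansion, write $v=Df^j(w)\in\tilde\cC(f^l(x^u))$ with $w\in\cC^u_\theta(f^{Nq}(x^u))$; for $n\ge 0$ with $n+j=Nm+r$, $0\le r<N$, one has $Df^n(v)=Df^r\circ(Df^N)^m(w)$, and combining the $f^N$-expansion with uniform bounds on $Df^r|_{E^c_f}$ and $Df^j|_{E^c_f}$ for $0\le r,j<N$ yields $\|Df^n(v)\|\ge C'(\lambda_0^{1/N})^n\|v\|$ with rate $\lambda_0^{1/N}>1$. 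The stable case is handled symmetrically via $Df^{-1}$.

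The main technical point I foresee is that the pushforward cones $\tilde\cC(f^k(x^u))$ should retain a uniformly positive ``size'' in the sense of the paper. Setting $K:=\sup_{x\in M}\max(\|Df|_{E^c_f(x)}\|,\|Df^{-1}|_{E^c_f(x)}\|)<\infty$, which is finite by compactness of $M$, a direct calculation in the decomposition $Df^j(E)\oplus Df^j(F)$ sandwiches each $\tilde\cC(f^k(x^u))$ between cones of sizes $\theta K^{\pm 2N}$; one then accommodates this by setting $\theta':=\theta K^{-2N}$ and absorbing the bounded multiplicative loss into the expansion constant $C'$. This is purely a bookkeeping step and represents the only real subtlety in the argument; the rest is forced by the two structural observations that (i) $f$ and $f^N$ share the same invariant bundles and foliations, and (ii) the SH-Saddle property is a cone-based condition with expansion, both of which interact well with passing to fixed powers.
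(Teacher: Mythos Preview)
Your argument is correct in substance. The paper, however, does not give a direct proof: it simply records the proposition as an immediate corollary of the preceding result that the SH-Saddle property is independent of the Riemannian metric. The link is this: your pushforward cone $\tilde\cC(f^{Nq+j}(x^u))=Df^j\bigl(\cC^u_\theta(f^{Nq}(x^u))\bigr)$ has size \emph{exactly} $\theta$ with respect to the pushforward decomposition $Df^j(E)\oplus Df^j(F)$ once lengths are measured in the pulled-back inner product $\langle u,w\rangle':=\langle Df^{-j}u,\,Df^{-j}w\rangle$ on $E^c_f(f^{Nq+j}(x^u))$. These pointwise inner products are uniformly equivalent to the ambient metric with constants $K^{\pm N}$, so the metric-independence proposition absorbs all the distortion in one stroke. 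That is precisely the ``bookkeeping step'' you flag, handled abstractly rather than by hand.

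Your explicit route---filling in the intermediate times by pushforward, verifying $Df$-invariance by cases on $j$, and bounding the expansion through $K$---is equivalent and arguably more transparent, since the paper's one-line deduction leaves the reader to reconstruct exactly this computation. The only imprecision is the phrase ``one then accommodates this by setting $\theta':=\theta K^{-2N}$'': replacing the pushforward cones by cones of a single fixed size $\theta'$ (in the original metric) would in general spoil either the containment or the $Df$-invariance. What actually closes the gap is that the pushforward family already satisfies the definition verbatim in an equivalent metric, as above; equivalently, uniform boundedness of the cone sizes (which you do establish) is all the definition really uses.
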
	

Let us introduce some notation that will be useful along the article, and will help us to get a better understanding of what it means the SH-Saddle property. Let $f\in \cPH(M)$ be such that its unstable foliation has the SH-Saddle property of index $d\leq \text{dim}E^c_f$ and let $L>0$, $\theta>0$, $\lambda_0>1$ and $C>0$ be the constants given by Definition \ref{defshuu}. We can define the following subset:
\begin{equation*} \label{defH+}
	H^+_{\lambda_0,d}(f)=\{y\in M: \text{conditions \ref{c1shu} and \ref{c2shu} of Definition \ref{defshuu} are satisfied}\}.
\end{equation*}  
Then the unstable foliation has the SH-Saddle property of index $d$ if and only if $$H^+_{\lambda_0,d}(f)\cap \cW^{uu}_f(x,L)\neq \emptyset \  \ \text{for every} \ x\in M.$$ In the same way let $f \in \cPH(M)$ be such that its stable foliation has the SH-Saddle property of index $d$ and let $L>0$, $\theta>0$, $\lambda_0>1$ and $C>0$ be the constants given by Definition \ref{defshss}, then we can define the following subset:
\begin{equation*}\label{defH-}
	H^{-}_{\lambda_0,d}(f)=\{y\in M: \text{conditions \ref{c1shs} and \ref{c2shs} of Definition \ref{defshss} are satisfied}\}.
\end{equation*} 
and the stable foliation has the SH-Saddle property of index $d$ if and only if  $$H^-_{\lambda_0,d}(f)\cap \cW^{ss}_f(x,L)\neq \emptyset \ \ \text{for every} \ x\in M.$$ 

\begin{rem}
	The sets $H^{*}_{\lambda_0,d}(f)$ are closed subsets of $M$, for $*=+,-$.
\end{rem}

	In the reminder of this section we are going to prove that the SH-Saddle property is $C^1$ open among $\cPH(M)$. According to Definition \ref{defshf} we only have to prove that having an unstable manifold with SH-Saddle property \ref{defshuu}, and having a stable manifold with SH-Saddle property \ref{defshss} are $C^1$ open properties. We are going to focus on the unstable case, since the stable case is completely symmetric. We begin with a few simple lemmas that only uses the properties of the $C^1$ topology.

	\begin{lem} \label{lextensionconos1}
		
		Suppose that the unstable foliation of $f\in \cPH(M)$ has SH-Saddle property of index $d$. Then there is $\delta_0>0$ such that if $d(y,H^+_{\lambda_0,d}(f))<\delta_0$, then $y$ and $f(y)$ have  $d$-center cones $\cC^u(y)$ and $\cC^u(f(y))$ such that $Df(\cC^u(y))\subseteq \cC^u(f(y))$. 
	\end{lem}
	
	\begin{proof}
		We know that for every $x\in H^+_{\lambda_0,d}(f)$ there is a cone $\cC^u(x)$ which is $Df$-invariant. Now for the first part of the lemma just notice that since the family of center cones comes from a non-degenerate quadratic form, we can extend this quadratic form to neighbours by continuity. For the invariance just observe that $Df$ is uniformly continuous.
	\end{proof}
	Since the family of cones varies continuously, the same family of cones in the lemma above is still invariant for every $g$ sufficiently $C^1$ close to $f$. Then we obtain the following.
	\begin{lem} \label{lextensionconos2}
		Suppose that the unstable foliation of $f\in \cPH(M)$ has the SH-Saddle property of index $d$, and let $\delta_0>0$ be as in Lemma \ref{lextensionconos1}. Then there is a $C^1$-neighbourhood $\cU_0(f)$ of $f$ such that if $g\in \cU_0(f)$ and $d(y,H^+_{\lambda_0,d}(f))<\delta_0$ then $Dg(\cC^u(y))\subseteq \cC^u(g(y))$. 
	\end{lem}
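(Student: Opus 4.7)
The plan is to exploit the $C^1$-closeness of $g$ to $f$ together with the uniform continuity of the cone field $\cC^u$ constructed in Lemma~\ref{lextensionconos1}. Writing $\cC^u(y) = \{v \in E^c_f(y) : B_y(v) \leq 0\}$ for a continuous family of non-degenerate quadratic forms $B_y$ defined on the $\e_0$-neighborhood of $H^+_{\lambda_0,d}(f)$, the conclusion of Lemma~\ref{lextensionconos1} is equivalent to $B_{f(y)}(Df_y v) \leq 0$ whenever $B_y(v) \leq 0$, for every $y$ with $d(y, H^+_{\lambda_0,d}(f)) < \delta_0$. The goal is to establish the analogous inequality with $f$ replaced by any $g$ sufficiently $C^1$-close to $f$.

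First I would upgrade this inclusion to \emph{strict} invariance on the closed $(\delta_0/2)$-neighborhood of $H^+_{\lambda_0,d}(f)$. Using the freedom provided by $\delta_0 < \e_0$ in Lemma~\ref{lextensionconos1}, I would slightly enlarge the cones at each point of this smaller neighborhood (from size $\theta$ to some $\theta' > \theta$) via a continuous extension of an appropriate quadratic form, so that the $Df$-invariance of the original size-$\theta$ cones translates into strict containment of their images inside the enlarged target cones. Compactness of $M$ together with continuity of $y \mapsto Df_y$ and of $y \mapsto B_y$ then furnishes a uniform constant $\rho > 0$ such that $B_{f(y)}(Df_y v) \leq -\rho \|v\|^2$ for every $y$ in the closed $(\delta_0/2)$-neighborhood of $H^+_{\lambda_0,d}(f)$ and every vector $v \in \partial \cC^u(y)$.

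With strict invariance in hand, the standard $C^1$-perturbation argument closes the proof. Because $g$ being $C^1$-close to $f$ forces both $g(y)$ to be $C^0$-close to $f(y)$ and $Dg_y$ to be operator-norm close to $Df_y$, uniformly in $y$, and because $(z,A,v) \mapsto B_z(Av)$ is uniformly continuous on the compact set of relevant arguments, one gets $|B_{g(y)}(Dg_y v) - B_{f(y)}(Df_y v)| < \rho/2\,\|v\|^2$ whenever $g$ lies in a sufficiently small $C^1$-neighborhood $\cU_0(f)$ of $f$. This yields $B_{g(y)}(Dg_y v) < 0$ for every $v \in \partial \cC^u(y)$, and hence $Dg(\cC^u(y)) \subseteq \cC^u(g(y))$, as required. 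The main technical hurdle I foresee is precisely the strictification step: if the naive continuous enlargement of cones does not immediately produce the required strict invariance, I would fall back on Proposition~\ref{fshifffksh} and replace $f$ by a sufficiently high iterate $f^N$, where the strong expansion factor $C\lambda_0^N$ on the cone field should push images of boundary vectors deep into the interior of the target cones, providing the needed uniform margin.
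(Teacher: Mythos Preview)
Your approach is correct and is essentially the same as the paper's: both rest on the observation that cone invariance, being a continuous (open) condition expressed via a non-degenerate quadratic form, persists under $C^1$ perturbation. The paper's entire argument is the single sentence preceding the lemma (``Since the family of cones varies continuously, the same family of cones in the lemma above is still invariant for every $g$ sufficiently $C^1$ close to $f$''), so you have in fact supplied considerably more detail than the source; in particular, your identification of the strictification step as the one nontrivial point, and your fallback via Proposition~\ref{fshifffksh}, are exactly the kind of care the paper elides.
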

	
	Now we are ready to prove the main theorem of this section.
	
	\begin{thm}\label{shisopen}
		Suppose that the unstable foliation of $f\in \cPH(M)$ has SH-Saddle property of index $d$. Then there are constants $\lambda>1$, $L>0$ and a  $C^1$-neighbourhood $\mathcal{V}$ of $f$ such that, if $g\in \mathcal{V}$ then
		$H^+_{\lambda,d}(g)\cap \cW^{uu}_g(x,L)\neq \emptyset$ for every $x\in M$ (i.e.: the unstable foliation $\cW^{uu}_g$ has the SH-Saddle property of index $d$ with constants $\lambda>1$ and $L>0$). 
	\end{thm}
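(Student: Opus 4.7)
The idea is to quantify the asymptotic expansion in the definition of SH-Saddle as a uniform finite-iterate expansion, extend the cone field to a neighborhood of $H^+_{\lambda_0,d}(f)$, and then transfer everything to any $g$ sufficiently $C^1$-close to $f$ by combining the $C^1$-robustness of cones (Lemmas \ref{lextensionconos1} and \ref{lextensionconos2}) with the $C^0$-stability of the strong unstable foliation.

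Concretely, I first use Proposition \ref{fshifffksh} together with the SH-Saddle estimate $\|Df^n(v)\| \geq C\lambda_0^n\|v\|$ on cones over $H^+_{\lambda_0,d}(f)$ to pick $N_0$ large enough that $C\lambda_0^{N_0} \geq 2\mu^{N_0}$ for some $\mu \in (1,\lambda_0)$. Replacing $f$ by $f^{N_0}$ if convenient, I may then assume a single-iterate expansion $\|Df(v)\| \geq \mu\|v\|$ at all points of $H^+_{\lambda_0,d}(f)$. Lemma \ref{lextensionconos1} extends the cone field continuously to an $\epsilon_0$-neighborhood of $H^+_{\lambda_0,d}(f)$; continuity of $Df$ and compactness then yield, after shrinking this neighborhood to an open set $N$, the uniform expansion $\|Df(v)\| \geq \mu_1\|v\|$ on extended cones over $N$, for some $\mu_1 \in (1,\mu)$. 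Lemma \ref{lextensionconos2} combined with the $C^1$-continuity of the derivative then produces a $C^1$-neighborhood $\mathcal{V} \subset \cU_0(f)$ such that, for every $g \in \mathcal{V}$, both $Dg$-invariance of the extended cones and the expansion $\|Dg(v)\| \geq \lambda\|v\|$ hold on a subneighborhood $N' \subset N$, for some $\lambda \in (1,\mu_1)$.

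Next, for each $x \in M$ and $g \in \mathcal{V}$, the SH-Saddle property of $f$ provides a witness $x^u \in \cW^{uu}_f(x,L) \cap H^+_{\lambda_0,d}(f)$. By the standard $C^0$-stability of strong unstable foliations under $C^1$-perturbations of partially hyperbolic diffeomorphisms, after further shrinking $\mathcal{V}$ if necessary, one can choose $y^u \in \cW^{uu}_g(x, L+1)$ within $\delta_0$ of $x^u$, so that $y^u \in N'$ and the extended center cone $\cC^u(y^u)$ is defined.

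The main obstacle is to propagate cone invariance and expansion along the full forward $g$-orbit of $y^u$, rather than only for a few iterates, since $g^n(y^u)$ may a priori drift away from the $f$-orbit $\{f^n(x^u)\} \subset H^+_{\lambda_0,d}(f)$ and exit $N'$. I would address this by choosing $y^u$ not merely $\delta_0$-close to $x^u$, but as the image of $x^u$ under the natural correspondence between the strong unstable leaves of $f$ and of $g$, and then using the forward $f$-invariance of $H^+_{\lambda_0,d}(f)$ together with the partial hyperbolicity of $g$ to show inductively that the entire orbit $\{g^n(y^u)\}_{n \geq 0}$ remains in $N'$. Once the orbit is confined to $N'$, the cones $\cC^u(g^n(y^u))$ are defined for all $n \geq 0$, they are $Dg$-invariant, and they are uniformly expanding by rate $\lambda$, so $y^u \in H^+_{\lambda,d}(g) \cap \cW^{uu}_g(x, L')$ with $L' = L+1$, which is the desired conclusion.
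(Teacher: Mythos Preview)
Your reduction to a single-iterate expansion on an open neighborhood $N'$ of $H^+_{\lambda_0,d}(f)$ is fine, and steps 1--4 are essentially the same preparatory work as in the paper. The gap is in the last paragraph: there is no ``natural correspondence'' between the strong unstable leaves of $f$ and of $g$ with the property you need, and the inductive argument you sketch cannot work as stated. Even if $y^u$ is chosen arbitrarily close to $x^u$, the distance $d(g^n(y^u),f^n(x^u))$ is only controlled for a \emph{bounded} number of iterates (roughly $m_2$ in the paper's notation); beyond that it drifts in the center direction and blows up along the strong unstable direction. Partial hyperbolicity gives you nothing here, since $y^u$ lies neither on a $g$-stable nor on a $g$-center manifold of $x^u$, and partially hyperbolic systems do not enjoy shadowing in the center. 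So you cannot conclude that the whole forward $g$-orbit of a single initially-well-chosen $y^u$ stays in $N'$.

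The paper bypasses this difficulty by a nested-disk construction rather than by tracking one $f$-orbit. One picks $z_0^u\in\cW^{uu}_g(x,L)$ close to $H^+_{\lambda_0,d}(f)$ and controls expansion only for a fixed block of $k_0=m_1+m_2$ iterates. The key extra ingredient (your $m_1$) is that $g^{k_0}$ expands unstable disks enough that $g^{k_0}(\cW^{uu}_g(z_0^u,\delta_1/4))$ again contains an unstable ball of radius $L$, so one may pick a \emph{new} point $z_1^u$ in it close to $H^+_{\lambda_0,d}(f)$, and repeat. Pulling back, the points $y_n^u:=g^{-nk_0}(z_n^u)$ lie in shrinking nested unstable disks inside $\cW^{uu}_g(x,2L)$; any accumulation point $y$ then has its entire forward $g^{k_0}$-orbit $\delta_1$-close to $H^+_{\lambda_0,d}(f)$, hence in $N'$, and one concludes via Proposition~\ref{fshifffksh}. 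In short, the paper does not shadow a fixed $f$-orbit; it repeatedly resets using the growth of unstable leaves and extracts the witness as a limit.
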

	
	\begin{proof}
		Take $f\in \cPH(M)$ such that its strong unstable foliation has the SH-Saddle property of index $d$. That means there are constants $\lambda_0>1$, $L_0>0$ and $C>0$ such that Definition \ref{defshuu} holds. Then we have:
		$$H^+_{\lambda_0,d}(f)\cap \cW^{uu}_f(x,L_0)\neq \emptyset \ \text{ for every } x\in M.
		$$ Let $\delta_0>0$ and $\cU_0(f)$ be as in Lemma \ref{lextensionconos1} and Lemma \ref{lextensionconos2}. Take $c>0$ such that $\frac{\lambda_{0}}{1+c}= \lambda_{1}>1$. Take $\e>0$, $\delta_1\in (0,\delta_0)$ and $\mathcal{U}_{1}(f)\subseteq \mathcal{U}_{0}(f)$ such that if $g\in \mathcal{U}_{1}(f)$, $d(x,y)<\delta_1$ and $v\in T_xM$ has $\norm{v}=1$ then:
		\begin{equation*}
			\norm{Df_x(v)-Dg_y(w)}<\e
		\end{equation*}
		where $w=P_{x,y}(v) \in T_yM$ is the parallel transport of $v$ from $x$ to $y$. 
		We can take $\e>0$ small enough such that if $d(x,y)<\delta_1$ and $g\in \mathcal{U}_{1}(f)$ then:
		\begin{equation} \label{constantdelta1}
			\frac{1}{1+c}\leq \frac{\norm{Df_x}}{\norm{Dg_y}} \leq 1+c \ \textnormal{ and } \ \frac{1}{1+c}\leq \frac{m\{Df_x\}}{m\{Dg_y\}} \leq 1+c.
		\end{equation}
		Finally let $K^+=\sup \{\norm{Df|_{E^c(x)}}: x\in M\}$ and $K^-=\inf \{ m\{Df|_{E^c(x)}\}: x\in M\}$. We can assume that $K^+$ and $K^-$ are $C^1$-uniform on a neighbourhood $\cU_2(f)\subseteq \cU_1(f)$.
		
		Let $m_1\in \Z^+$ be large enough such that 
		\begin{equation} \label{m2}
			(\lambda_u)^{m_1}>2 
		\end{equation} and moreover, for any $g\in \cU_2(f)$ and any $x\in M$ we have
		\begin{equation} \label{mepsilon}
			\cW^{uu}_g(g^{m_1}(x),L_0) \subset g^{m_1}(\cW^{uu}_g(x,\delta_1/4)).
		\end{equation}  
		Now take $m_2\in \Z^+$ sufficiently large, and take $\lambda_2$ such that
		\begin{equation} \label{eqlambda}
			C\lambda_1^{m_2}(K^-)^{m_1}\geq \lambda_2>1.
		\end{equation}
		Let $\cU_3(f)$ and $\delta_2\in (0,\delta_1/2)$ be such that if $d(x,y)<\delta_2$ and $g\in \cU_3(f)$, then $d(f^j(x),g^j(y))<\delta_1$, for $0\leq j \leq m_2$. 
		
		Finally take $\cU_4(f) \subset \cU_3(f)$ such that for every $g \in \cU_4(f)$ we have 
		\begin{equation*}
			d_H(\cW^{uu}_g(x,L_0),H^+_{\lambda_0,d}(f))<\delta_2.
		\end{equation*}
		We claim that every $g\in \mathcal{V}=\cU_4(f)$ has unstable manifold with SH-Saddle property of index $d$. In fact, we are going to see that $g^{k_0}$ has this property for $k_0=m_1+m_2$, with constants $L=2L_0$ and $\lambda_2>1$ (where $\lambda_2$ comes from Equation \eqref{eqlambda}). Then we conclude by Proposition \ref{fshifffksh}.
		
		To see this, take $g\in \mathcal{V}$ and $x\in M$. We know there are points $x^u_0 \in H^+_{\lambda_0,d}(f)$ and $z^u_0 \in \cW^{uu}_g(x,L_0)$ such that $d(x^u_0,z^u_0)<\delta_2$. Notice that since $\delta_2<\delta_0$ we know there is a center cone $\cC^u(z^u_0)$.
		
		Now let $v \in \cC^u(z^u_0)$. Since  $d(x^u_0,z^u_0)<\delta_2$ we have that $d(f^j(x^u_0),g^j(z^u_0))<\delta_1$ for $0\leq j \leq m_2$. Then we have:
		\begin{equation} \label{cexpansion1}
			\norm{Dg^{m_2}_{z^u_0}(v)}\geq \frac{\norm{Df^{m_2}_{x^u_0}(w)}}{(1+c)^{m_2}} \geq C \left(\frac{\lambda_0}{1+c}\right)^{m_2}\norm{w}=C\lambda_1^{m_2}\norm{w} 
		\end{equation}
		where $w=P_{z^u_0,x^u_0}(v)$ is the parallel transport of $v$ from $z^u_0$ to $x^u_0$. Now, 
		\begin{equation} \label{cexpansion2}
			\norm{Dg^{k_0}_{z^u_0}(v)}=\norm{Dg^{m_1}_{g^{m_2}(z^u_0)}(Dg^{m_2}_{z^u_0}(v))} \geq (K^-)^{m_1}C\lambda_1^{m_2} \norm{v} \geq \lambda_2 \norm{v}.
		\end{equation}	
		Now by \eqref{mepsilon}, we can apply the same argument to $\cW^{uu}_g(g^{k_0}(z^u_0),L_0)$, and we can find points $x^u_1\in H^+_{\lambda_0,d}(f)$ and $z^u_1 \in \cW^{uu}_g(g^{k_0}(z^u_0),L_0)$ such that $d(x^u_1,z^u_1)<\delta_2$. Then, there is a center cone $\cC^u(z^u_1)$ and for every vector $v\in \cC^u(z^u_1)$ we have $\norm{Dg^{k_0}_{z^u_1}(v)}\geq \lambda_2\norm{v}$. Call $y^u_1=g^{-k_0}(z^u_0)$. Now, by \eqref{mepsilon} we have that $g^{-m_1}(z^u_1)\in \cW^{uu}_g(g^{m_2}(y^u_1),\delta_1/4)$ and this implies that
		$$d(x^u_0,y^u_1)\leq d(x^u_0,y^u_0)+d(y^u_0,y^u_1)<\delta_2 +\frac{\delta_1}{4}\leq \frac{\delta_1}{2}+\frac{\delta_1}{4}<\delta_1 < \delta_0$$ and there is a $d$-center cone $\cC^u(y^u_1)$. Moreover we have that 
		$$ d(g^j(y^u_1),g^j(z^u_0))<\delta_1 \ \ \text{for every} \ \ 0\leq j \leq m_2 
		$$
		and by applying the same calculations as in \eqref{cexpansion1} and \eqref{cexpansion2} we have 
		\begin{equation*}
			\norm{Dg^{2k_0}_{y^u_1}(v)}\geq (\lambda_2)^2\norm{v}.
		\end{equation*} 	
		Inductively, we can find sequences $\{z^u_n\}_{n\in \N}$, $\{y^u_n\}_{n\in \N}$ and $\{x^u_n\}_{n\in \N}$, which verify the following:
		\begin{itemize}
			\item $z^u_{n}\in \cW^{uu}_g(g^{k_0}(z^u_{n-1}),L_0)$.
			\item $x^u_n\in H^+_{\lambda_0,d}(f)$.
			\item $d(z^u_n,x^u_n)<\delta_2$.
			\item $y^u_n=g^{-k_{0}n}(z^u_n)$.
		\end{itemize} 
		Notice that since $z^u_{n}\in \cW^{uu}_g(g^{k_0}(z^u_{n-1}),L_0)$, we have $d(g^{-k_0}(z^u_n),z^u_{n-1})<\delta_1/4$ by \eqref{mepsilon}. Then by \eqref{m2} we have
		\begin{eqnarray*}
			d(y^u_{n-1},y^u_n)&=&d(g^{-k_0(n-1)}(z^u_{n-1}),g^{-k_0n}(z^u_n)) \\
			&=&d(g^{-k_0(n-1)}(z^u_{n-1}), g^{-k_0(n-1)}(g^{-k_0}(z^u_n)) 
			\leq \frac{\delta_1}{4} \left( \frac{1}{2} \right)^{n-1} =\delta_1 \left( \frac{1}{2} \right)^{n+1}.
		\end{eqnarray*}
		By the triangular inequality, the distance between $x^u_0$ and $y^u_n$ is 
		\begin{equation*}
			d(x^u_0,y^u_n) \leq d(x^u_0,y^u_0)+\sum_{j=1}^{n} d(y^u_{j-1},y^u_j)\leq \delta_2+\sum_{j=1}^{n} \delta_1 \left(\frac{1}{2} \right)^{j+1} < \sum_{j=0}^{n} \left(\frac{1}{2} \right)^{j+1}\delta_1 <\delta_1
		\end{equation*}
		since $\delta_2 <\delta_1/2$. 
		Then there is a $d$-center cone $\cC^u(g^j(y^u_n))$ such that $Dg(\cC^u(g^j(y^u_n))) \subset \cC^u(g^{j+1}(y^u_n))$ for every $j\in \{0,\dots,nk_0\}$. Moreover $y^u_n\in \cW^{uu}_g(x,2L_0)$. 
		
		By the same reasons than above, if $v\in \cC^u(g^{ik_0}(y^u_n))$ we have 
		$$\norm{Dg^{jk_0}_{g^{ik_0}(y^u_n)}(v)}\geq (\lambda_2)^j\norm{v} \ \ \text{for every} \ \ 0\leq i+j \leq n.$$
		Finally, if we take $y \in \cW^{uu}_g(x,2L_0)$ as an accumulation point of $\{y^u_n\}_{n\in \N}$ we obtain that there is a $d$-center cone  $\cC^u(g^l(y))\subset E^c_g(g^l(y))$ such that $Dg(\cC^u(g^l(y)))\subset \cC^u(g^{l+1}(y))$ for every $l\geq 0$ and $\norm{Dg^{jk_0}(v)}\geq \lambda_2^j\norm{v}$, for every $v\in \cC^u(g^{lk_0}(y))$ and $j,l>0$. 
	\end{proof}
	
	Since the $C^1$-openess of the SH-Saddle property for stable manifolds is completely analogous we get the following corollary.
	
	\begin{cor}
		The SH-Saddle property \eqref{defshf} is $C^1$ open among $\cPH(M)$. 
	\end{cor}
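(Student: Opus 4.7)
The plan is to deduce the corollary from Theorem \ref{shisopen} together with its symmetric analogue for the strong stable foliation. Recall that by Definition \ref{defshf}, a diffeomorphism $f\in\cPH(M)$ has the $(d_1,d_2)$ SH-Saddle property precisely when $\cW^{ss}_f$ has the SH-Saddle property of index $d_1$ and $\cW^{uu}_f$ has the SH-Saddle property of index $d_2$. Theorem \ref{shisopen} already furnishes a $C^1$-neighbourhood $\cV^u$ of $f$ such that every $g\in\cV^u$ has its strong unstable foliation satisfying the SH-Saddle property of index $d_2$.

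Next I would establish the corresponding openness for the strong stable case. The cleanest way is to observe that the stable SH-Saddle property for $f$ is exactly the unstable SH-Saddle property for $f^{-1}$: indeed, $E^{ss}_f = E^{uu}_{f^{-1}}$, $\cW^{ss}_f = \cW^{uu}_{f^{-1}}$, $E^c_f = E^c_{f^{-1}}$, and the $Df^{-1}$-invariance of the cones along the backward $f$-orbit (Definition \ref{defshss}) translates verbatim into $D(f^{-1})$-invariance along the forward $f^{-1}$-orbit, with the expansion constant $\lambda_0^{-n}$ of $Df^n$ in reverse time becoming expansion by $\lambda_0^n$ for $D(f^{-1})^n$ in forward time. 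Thus Theorem \ref{shisopen}, applied to $f^{-1}$, yields a $C^1$-neighbourhood $\widetilde\cV$ of $f^{-1}$ whose elements all enjoy the unstable SH-Saddle property of index $d_1$. Pulling back through the $C^1$-continuous inversion map $g\mapsto g^{-1}$ provides a $C^1$-neighbourhood $\cV^s$ of $f$ such that every $g\in\cV^s$ has $\cW^{ss}_g$ with the SH-Saddle property of index $d_1$.

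Finally I would set $\cV=\cV^u\cap\cV^s$, which is a $C^1$-neighbourhood of $f$ in $\cPH(M)$ (using that partial hyperbolicity is itself $C^1$ open, so shrinking $\cV$ if necessary keeps us inside $\cPH(M)$). By construction, every $g\in\cV$ has both its strong unstable and its strong stable foliation satisfying the appropriate SH-Saddle property, hence has the $(d_1,d_2)$ SH-Saddle property as required. No substantial obstacle arises here, since all the real work was carried out in Theorem \ref{shisopen}; the only point requiring a moment of care is the symmetry argument identifying the stable SH-Saddle property of $f$ with the unstable one for $f^{-1}$, but this is immediate from unwinding the definitions.
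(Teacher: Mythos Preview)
Your proposal is correct and matches the paper's approach: the paper simply remarks that the stable case is ``completely analogous'' to Theorem \ref{shisopen} and states the corollary, which is precisely your argument of intersecting the two neighbourhoods. Your reduction of the stable case to Theorem \ref{shisopen} applied to $f^{-1}$ is a clean way to make the symmetry explicit, though the paper leaves this implicit.
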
 
	We end this section with a key corollary from Theorem \ref{shisopen} that we're going to use in the next sections. First let us say that $D$ is a center disk of dimension $d\leq \textnormal{dim}E^c_f$ if it is a $d$-dimensional embedded disk contained in some center plaque. 
	
	\begin{cor} \label{corotamanodisco}
		
		Let  $f\in \cPH(M)$ be such that its unstable foliation has the SH-Saddle property of index $d$ and let $\lambda>1$, $\delta_1>0$ and $\mathcal{V}$ as in the Theorem \ref{shisopen}. Take $g\in \mathcal{V}$, $x^u \in H^+_{\lambda,d}(g)$ and $D^u$ a center disk of dimension $d$ tangent to $\cC^u(x^u)$. Then there is $N>0$ such that $g^n(D^u)$ contains a center disk of dimension $d$, centered at $g^n(x^u)$ of diameter bigger than $2\delta_1$ for every $n\geq N$. 
		
		Analogously with the stable foliation. 
	\end{cor}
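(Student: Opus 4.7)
The plan is to show that $D^u$ grows exponentially in intrinsic diameter under forward iteration by $g$, thanks to cone invariance along the forward orbit of $x^u$ and the uniform expansion guaranteed by $x^u \in H^+_{\lambda,d}(g)$. Once the intrinsic diameter of $g^n(D^u)$ around $g^n(x^u)$ exceeds $2\delta_1$, we are done.

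To implement this, I would first extend the cones $\cC^u(g^n(x^u))$ continuously to $\rho$-balls around each orbit point for some radius $\rho>0$ independent of $n$. This is the content of Lemma \ref{lextensionconos1} read with uniform constants: the cones are cut out by nondegenerate quadratic forms on the continuous bundle $E^c_g$, so compactness of $M$ together with the $C^1$-uniformity of $g$ yields a uniform extension radius and, after shrinking $\rho$ if necessary, the invariance $Dg(\cC^u(y))\subset \cC^u(g(y))$ on every such $\rho$-ball (this is the analogue of Lemma \ref{lextensionconos2}). Next, after possibly shrinking $D^u$ to a subdisk $D'$ of intrinsic radius $r_0>0$ centered at $x^u$, I may arrange that every tangent space $T_yD'$ with $y\in D'$ lies in $\cC^u(y)$, by continuity of the cone field and the hypothesis $T_{x^u}D^u\subset \cC^u(x^u)$. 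An induction on $n$ shows that, while $g^n(D')$ stays in the $\rho$-tube around the orbit of $x^u$, its tangent spaces remain in $\cC^u$ and are expanded by $\|Dg^n(v)\|\geq C\lambda^n\|v\|$. Integrating along any $C^1$ path in $D'$ emanating from $x^u$, its image in $g^n(D')$ has intrinsic length at least $C\lambda^n$ times the original one, so that $g^n(D^u)$ contains an embedded $d$-disk around $g^n(x^u)$ of intrinsic diameter at least $C\lambda^n r_0$ (provided we harvest the disk before it leaves the $\rho$-tube). Choosing $N$ with $C\lambda^N r_0 > 2\delta_1$ delivers the conclusion.

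The main obstacle is ensuring that the constants $\rho$, the invariance radius of the extended cone field, and the modulus of nonlinearity of $g^n$ restricted to $D'$ do not degenerate as $n\to\infty$. Compactness of $M$ and uniform continuity of $Dg$ take care of the first two; the third is controlled because the exponential expansion inside $\cC^u$ dominates the distortion, so starting with $D'$ small enough one can arrange that tangent spaces of $g^n(D')$ never tilt out of the cone while the iterate remains in the $\rho$-tube. A minor additional subtlety is that $g^n(D')$ must stay embedded near $g^n(x^u)$; this follows from the fact that $\cC^u$ is strictly transverse to the directions of $E^c_g$ complementary to any $d$-plane it contains, which prevents the iterate from folding back on itself within the tube.
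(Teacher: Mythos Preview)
Your argument is correct and follows the same idea as the paper: cone invariance along the forward orbit of $x^u$ together with the uniform expansion $\|Dg^n(v)\|\geq C\lambda^n\|v\|$ force the intrinsic diameter of $g^n(D^u)$ around $g^n(x^u)$ to grow geometrically until it exceeds $2\delta_1$. The paper's proof is much terser because the constant $\delta_1$ was \emph{defined} inside the proof of Theorem~\ref{shisopen} (see Equation~\eqref{constantdelta1}) precisely so that, within distance $\delta_1$ of a point where the cone and expansion hold, the extended cone is still $Dg$-invariant and vectors in it are still expanded by the factor $\lambda$; hence your auxiliary radius $\rho$ and the re-derivation via Lemmas~\ref{lextensionconos1}--\ref{lextensionconos2} are already absorbed into the meaning of $\delta_1$, and the extra worries about distortion and embeddedness are unnecessary.
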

	
	\begin{proof} 
		First recall that $\mathcal{U}_1(f)$ and $\delta_1>0$ come from Equation \eqref{constantdelta1}. Then just notice that if $g\in \mathcal{V}\subseteq \mathcal{U}_1(f)$ and $d(x,x^u)<\delta_1$, then their distance is expanded by $\lambda>1$ for the future in the $\mathcal{C}^u$ direction. In particular their distance in the center leaf is $d(g(x),g(x^u))\geq \lambda d(x,x^u)\geq \lambda\delta_1>\delta_1$. Then, no matter how small is the center disk $D^u$, eventually by induction we obtain a center disk with diameter bigger than $2\delta_1$. 
	\end{proof}

	\section{Derived from Anosov revisited}  \label{sderivedfromanosov}
	
	In this section we are going to present a sufficient condition for derived from Anosov diffeomorphisms to be $C^1$ robustly transitive. We begin by explaining what we mean with derived from Anosov diffeomorphisms.
	
	Take $n\in \N$ with $n\geq 4$ and let $p:\R^n\to \R^n/\Z^n=\T^n$ be the canonical projection. Take $A\in \text{SL}(n,\Z)$ a hyperbolic matrix with a dominated spiltting of the form 
	\begin{equation}\label{matrixA} \R^n=E^{ss}_A\oplus E^{ws}_A\oplus E^{wu}_A\oplus E^{uu}_A
	\end{equation} and denote by $f_A$ to the linear Anosov diffeomorphism induced in the torus $\T^n$, i.e. $f_A \circ p =p\circ A$. By a slightly abuse of notation we are going to note $f_A=A$. In addition to this decomposition, we can group together the two middle bundles and call $E^c_A=E^{ws}_A\oplus E^{wu}_A$ to the center bundle. This way we get a dominated spiltting of the form $$\R^n=E^{ss}_A\oplus E^{c}_A\oplus E^{uu}_A.$$ With this splitting we can think of $A$ as a partially hyperbolic diffeomorphism too. We say that $f:\T^n\to\T^n$ is a \textit{derived from Anosov} diffeomorphism if it is isotopic to $A$. Now we let $\cPH_A(\T^n)$ be the set 
	\begin{equation*}
		\cPH_{A}(\mathbb{T}^n)= \left\{
		f \in \cPH(\mathbb{T}^n): f \simeq A, \ \textnormal{dim}E^{*}_{f}=\textnormal{dim}E^{*}_{A}, \ \text{for} \ *=ss,c,uu 
		\right\}
	\end{equation*} 
	where $f\simeq A$ means the maps are isotopic. Then $\cPH_{A}(\mathbb{T}^n)$ is the set of partially hyperbolic derived from Anosov diffeomorphism, such that the dimensions of the subbundles coincide with the dimensions of the linear subbundles. By the results of \cite{FPS} we know that every $f\in \cPH_{A}(\mathbb{T}^n)$ is dynamically coherent, i.e. the bundles $E^{ss}_f\oplus E^c_f$, $E^c_f\oplus E^{uu}_f$ and $E^c_f$ integrate to invariant foliations denoted by $\cWcs_f$, $\cWcu_f$ and $\cWc_f$ respectively.

	\subsection{Derived from Anosov with SH-Saddle property}
	In this subsection we are going to show that for every linear Anosov $A:\T^n\to \T^n$ as above, there is a derived from Anosov diffeomorphism with the SH-Saddle property with a given index (actually the same index as its linear part). 
	
	
	\begin{lem} \label{DAesSH}
		Let $A\in \textnormal{SL}(n,\Z)$ be a hyperbolic matrix with a dominated splitting as in Equation \eqref{matrixA}. Take $\e>0$ and call $U=B(0,\e) \subset \T^n=\R^n/\Z^n$. Take $f_t:\T^n\to \T^n$ an isotopy such that:
		\begin{enumerate} 
			\item $f_0=A$ and $f_1=f$, \label{remdeff1}
			\item $f_t|_{U^c}=A|_{U^c}$, for every $t\in [0,1]$, \label{remdeff2}
			\item $\text{dim}E^{*}_{f_t}=\text{dim}E^{*}_{A}$, for every $*=ss,c,uu$ and every $t\in [0,1]$. \label{remdeff3}
		\end{enumerate}
		Then, if $\e$ is sufficiently small, $f\in \cPH_A(\T^n)$ has the $(d_1,d_2)$ SH-Saddle property, where $d_1=\textnormal{dim}E^{ws}_A$ and $d_2=\textnormal{dim}E^{wu}_A$.
		
	\end{lem}
	\begin{proof}
		Take $f$ as in the hypotesys. We can assume that $\e$ is small enough in order to send $f_t$ to the quotient $\T^n=\R^n/\Z^n$. It is clear that a diffeomorphism $f$ built this way belongs to $\cPH_A(\T^n)$. By taking an iterate we can suppose that $\norm{Df_{x}|_{E^{uu}_f(x)}}>4$ for every $x\in \T^n$. Now take $0<\epsilon<1/4$. Then for every $x\in \T^n$, there is a point $z^{x}_0 \in \mathcal{W}^{uu}_{f}(x,1)$ such that $\mathcal{W}^{uu}_{f}(z^{x}_0,1/4)\cap U =\emptyset$. Call $\cD_{0}=\overline{\mathcal{W}^{uu}_{f}(z^{x}_0,1/4)}$. In the same way since  $f(\cD_{0})\supseteq\mathcal{W}^{uu}_{f}(f(z^x_0),1)$, we can find a disk $\cD_{1}=\overline{\mathcal{W}^{uu}_f(z^{x}_1,1/4)} \subset f(\cD_0)$ such that $\cD_1\cap U=\emptyset$. Inductively we get a sequence of unstable disks $\{\cD_j\}_{j\geq0}$ such that  $\cD_j\cap U=\emptyset$ for every $j\geq 0$ and $f^{-1}(\cD_j) \subset \cD_{j-1}$ (see Figure \ref{DASH} above). Finally the point $x^u=\bigcap_{j\geq 0}f^{-j}(\cD_j)$ never meets $U$ in the future.  Since $f$ is equal to $A$ outside $U$ we get that the point $x^u$ is hyperbolic for the future, and so the unstable manifold $\cW^{uu}_f$ has SH-Saddle property of index $d_2$. 
		
		\begin{figure}[H]
			\begin{center}
				\includegraphics [width=10cm]{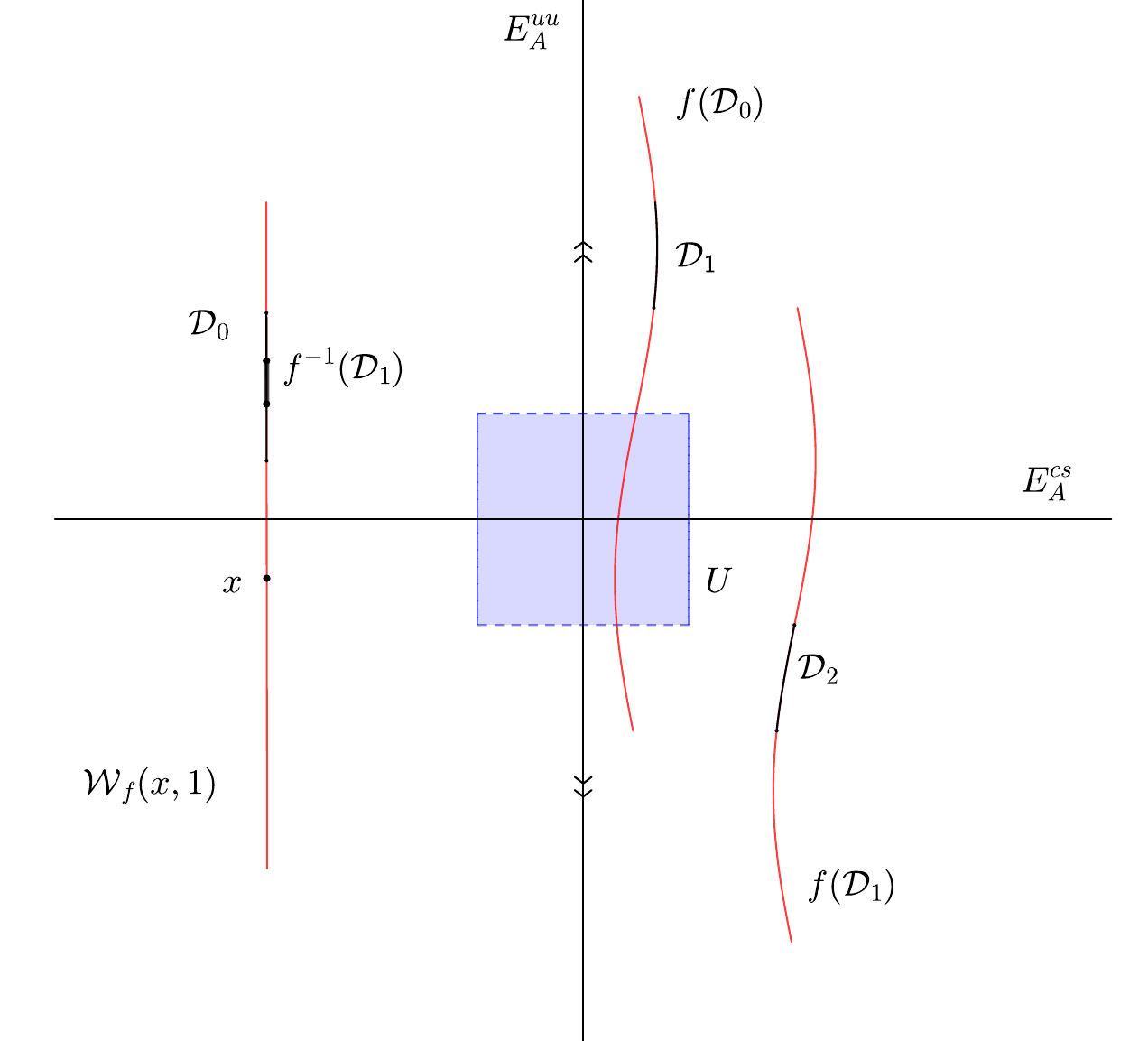}
				\caption{Finding a point whose forward orbit never meets $U$} \label{DASH}
			\end{center}
		\end{figure} 
		\vspace{-0.5cm}
		In the same way we can find a point $x^s$ in every strong stable leaf of large 1, such that the past orbit of $x^s$ never meets $U$. Once again since $f=A$ outside $U$, the same argument as above shows that $\cW^{ss}_f$ has SH-Saddle property of index $d_1$.	
	\end{proof}

	\subsection{A criterion for openess} \label{scopeness}
	
	In this subsection we present a result that we are going to apply for the proof of the main theorem. Roughly speaking it says that given a continuous function between topological spaces of the same dimension, and such that the fibers (preimages of points) of the funcion are small enough, then the image of the funtion must contain an open set. The version we are going to use comes from \cite{LZ} which is an improvement from a result of \cite{BK} (Proposition 3.2). We begin with a few definitions.
	
	\begin{df}
		Suppose $f:X\to Y$ is a continuous function between metric spaces. We say that $y\in Y$ is a \textit{stable value} if there is $\epsilon>0$ such that if $d_{C^0}(f,g)<\epsilon$ then $y\in Im(g)$.
	\end{df}
	
	\begin{rem} \label{remcontieneabierto}
		Let $Y=\mathbb{R}^n$ and suppose that $f:X \to \mathbb{R}^n$ has a stable value $y$, then $Im(f)$ contains an open set. To see this, take $\epsilon>0$ from the definition of stable value, and take a vector $v\in \mathbb{R}^d$ with $\norm{v}<\epsilon$. Then the map $g:X\to \mathbb{R}^d$ defined by $g(x)=f(x)-v$ satisfies $d_{C^0}(f,g)= \norm{v}<\epsilon$. Since $y$ is a stable value, there is a point $x\in X$ such that $g(x)=y$ and this is equivalent to $f(x)=y+v$. Since $v$ was arbitrary we get $B_{\mathbb{R}^n}(y,\epsilon)\subset Im(f)$.
	\end{rem}
	
	\begin{df} \label{dflight}
		Given a continuous function $f:X\to Y$ and $\rho>0$ we say that $f$ is $\rho-$light if for every $y \in Y$ the connected components of $f^{-1}(y)$ have diameter smaller than $\rho$.
	\end{df}
	
	\begin{prop}[Theorem F in \cite{LZ}] \label{propLZadapted} Given $d\in \mathbb{N}$ and $r>0$ there is $\rho=\rho(d,r)>0$ such that every $\rho$-light map $f:[-r,r]^d\to \mathbb{R}^d$ has a stable value. 
	\end{prop}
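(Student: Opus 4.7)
The plan is to establish the existence of a stable value via Brouwer degree theory. The enabling observation is: if $f : Q \to \R^d$ (with $Q = [-r,r]^d$) is continuous, $y_0 \notin f(\partial Q)$, and there is a relatively compact open set $U \subset \mathrm{int}(Q)$ with Brouwer degree $\deg(f, U, y_0) \neq 0$, then by homotopy invariance of the degree every $g$ with $\|g - f\|_{C^0} < \mathrm{dist}(y_0, f(\partial U))$ satisfies $\deg(g, U, y_0) \neq 0$, so in particular $y_0 \in g(U) \subseteq g(Q)$; thus $y_0$ is a stable value in the sense of Definition \ref{dflight}. The task therefore reduces to producing such a pair $(y_0, U)$ out of the $\rho$-lightness hypothesis.

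First I would use $\rho$-lightness to localize preimages. Choosing $\rho < r$, for any $x \in Q$ whose distance to $\partial Q$ exceeds $\rho$, the connected component $C(x) \subset f^{-1}(f(x))$ containing $x$ has diameter smaller than $\rho$, so $\overline{C(x)}$ is compactly contained in $\mathrm{int}(Q)$. Since $C(x)$ is a connected component of the compact set $f^{-1}(f(x))$, a standard separation argument produces an open neighborhood $U$ of $C(x)$, compactly contained in $\mathrm{int}(Q)$, with $\overline{U} \cap f^{-1}(f(x)) = C(x)$. In particular $f(x) \notin f(\partial U)$, and the local degree $\deg(f, U, f(x))$ is well-defined.

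The main obstacle---and the step that forces the quantitative dependence $\rho = \rho(d,r)$---is to exhibit at least one $x$ for which this local degree is non-zero. Here I would invoke topological dimension theory: $\rho$-lightness with $\rho$ arbitrarily small forces the map to be light, and the Hurewicz--Wallman dimension theorem then yields $\dim f(Q) \geq \dim Q = d$, so $f(Q) \subseteq \R^d$ has non-empty interior. Picking $y_0 \in \mathrm{int}(f(Q)) \setminus f(\partial Q)$---a non-empty choice for $\rho$ small since $f(\partial Q)$ has topological dimension at most $d-1$ and hence is nowhere dense in $\R^d$---additivity of the Brouwer degree over the disjoint preimage components of $y_0$, combined with a simplicial approximation of $f$ by PL maps in general position (where $\rho$-lightness is used to keep the approximating components small and interior), forces some local degree to be non-zero: otherwise a small homotopy supported near $y_0$ would push it out of $f(Q)$ entirely, contradicting $y_0 \in \mathrm{int}(f(Q))$. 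The precise quantitative choice of $\rho$ depends on $d$ and $r$ through the modulus that controls this dimension-preservation argument, as executed explicitly in \cite{LZ}.
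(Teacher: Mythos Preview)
The paper does not prove this proposition: it is quoted as Theorem~F of \cite{LZ} (itself an improvement of Proposition~3.2 in \cite{BK}), with only the remark that the statement for $[0,1]^d$ adapts to $[-r,r]^d$. So there is no in-paper argument to compare against; the relevant question is whether your sketch actually stands on its own. It does not, for the following reasons.

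Your appeal to the Hurewicz--Wallman dimension theorem is not valid here. That theorem needs control on the \emph{dimension} of fibers (light maps have totally disconnected, hence $0$-dimensional, fibers). A $\rho$-light map only has fiber \emph{components of small diameter}; those components can perfectly well be small $d$-dimensional balls. So for any fixed $\rho>0$ the hypothesis gives no bound on fiber dimension, and the conclusion $\dim f(Q)\ge d$ does not follow from Hurewicz--Wallman. Relatedly, the assertion that $f(\partial Q)$ has topological dimension at most $d-1$ is unjustified: continuous maps can raise dimension (Peano curves), and $\rho$-lightness does not preclude this on $\partial Q$.

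The degree step is also a genuine gap. Even granting $y_0\in \mathrm{int}\,f(Q)\setminus f(\partial Q)$, the implication ``all local degrees vanish $\Rightarrow$ a small homotopy pushes $y_0$ out of $f(Q)$'' is false in general: degree zero at $y_0$ does not make $y_0$ removable by $C^0$-small perturbations (a fold map already shows this). What one actually needs is a mechanism that forces $f|_{\partial Q}$, or $f$ restricted to the boundary of a well-chosen sub-cube, to have non-zero degree around some point; this is precisely the non-trivial content of \cite{BK,LZ}, and your final sentence effectively concedes that by deferring the ``precise quantitative choice of $\rho$'' back to \cite{LZ}. In short: the degree-theoretic framing is the right one, but the substantive step---extracting a non-zero degree from the $\rho$-lightness hypothesis with a uniform $\rho(d,r)$---is exactly what is missing from your outline and what the cited references supply.
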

	
	The version stated in \cite{LZ} is for maps $f:[0,1]^d \to \mathbb{R}^d$ but the proof can be adapted to maps $f:[-r,r]^d\to \mathbb{R}^d$ for a fixed $r>0$. Now combining this proposition and Remark \ref{remcontieneabierto} we have the following corollary.
	
	\begin{cor} \label{corcontieneabierto}
		Fix $d\in \mathbb{N}$ and $r>0$, and take the corresponding $\rho=\rho(d,r)>0$ from Proposition \ref{propLZadapted}. Then the image of every $\rho$-light map $f:[-r,r]^d\to \mathbb{R}^d$ contains an open set. 
	\end{cor}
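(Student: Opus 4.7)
The plan is to simply chain together the two ingredients that are explicitly set up immediately before the statement: Proposition \ref{propLZadapted} (which produces a stable value for any $\rho$-light map from $[-r,r]^d$ to $\R^d$) and Remark \ref{remcontieneabierto} (which says that a stable value of a map into $\R^n$ forces its image to contain an open neighborhood of that value). Since the corollary is really just the conjunction of these two facts specialized to $X=[-r,r]^d$ and $Y=\R^d$, there is no new technical content to introduce.

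More concretely, I would take an arbitrary $\rho$-light continuous map $f:[-r,r]^d\to \R^d$, with $\rho=\rho(d,r)$ chosen by Proposition \ref{propLZadapted}. The first step is to invoke Proposition \ref{propLZadapted} directly to obtain a stable value $y_0\in \R^d$ of $f$, i.e.\ an $\e>0$ such that every continuous $g:[-r,r]^d\to \R^d$ with $d_{C^0}(f,g)<\e$ has $y_0\in \mathrm{Im}(g)$. The second step is to repeat the short argument recorded in Remark \ref{remcontieneabierto}: for any $v\in \R^d$ with $\|v\|<\e$ the translate $g(x):=f(x)-v$ satisfies $d_{C^0}(f,g)=\|v\|<\e$, so by stability there exists $x\in [-r,r]^d$ with $f(x)-v=y_0$, i.e.\ $y_0+v\in \mathrm{Im}(f)$. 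Since $v$ was arbitrary in the open $\e$-ball, we conclude $B_{\R^d}(y_0,\e)\subseteq \mathrm{Im}(f)$, which is an open subset of $\R^d$ contained in the image.

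There is no real obstacle here; the entire content is the quoted proposition of Liu–Zhou, and the corollary is just a repackaging. The one cosmetic point worth being careful about is that the statement in \cite{LZ} is phrased for $f:[0,1]^d\to \R^d$, but as the author already remarked, the proof transfers verbatim to the cube $[-r,r]^d$ for any fixed $r>0$ (one may also simply precompose with the affine homeomorphism $[0,1]^d\to [-r,r]^d$, which preserves connectedness of fibers and only rescales diameters by a fixed constant, so that $\rho$-lightness is preserved up to adjusting $\rho$ by the scaling factor absorbed into the choice $\rho(d,r)$).
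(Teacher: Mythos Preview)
Your proof is correct and is exactly the argument the paper intends: the corollary is stated immediately after Proposition~\ref{propLZadapted} and Remark~\ref{remcontieneabierto} precisely as their conjunction, and the paper gives no further proof beyond ``combining this proposition and Remark~\ref{remcontieneabierto}.'' One cosmetic slip: the reference \cite{LZ} is Leguil--Zhang, not ``Liu--Zhou.''
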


	\subsection{Robust transitivity for DA diffeomorphisms}
	In this subsection we are going to present a robust transitivity criterion for DA diffeomorphisms. This result will be used in the next section for the proof of Theorem \ref{teoejemixed}. 
	
	Let $A\in \text{SL}(n,\Z)$ be a hyperbolic matrix with a dominated splitting as in Equation \eqref{matrixA}. Take $f\in \cPH_A(\T^n)$ and let $\wt{f}$ be a lift to $\R^n$. By \cite{Fr} we know there exist a continuous and surjective map $H_f:\R^n \to \R^n$ such that $A \circ H_f=H_f\circ \wt{f}$. The map $H_f$ is $\Z^n$-invariant and therefore it induces a continuous and surjective map $h_f:\T^n\to\T^n$ such that $h_f \circ f=A \circ h_f$. Moreover, the map $H_f$ varies continuously with the diffeomorphism $f$ in the $C^0$-topology and the distance $d_{C^0}(H_f ,\textit{Id}_{\wt{M}})=d_{C^0}(h_f,\textit{Id}_M)<\infty$. In particular we have that $d_{C^0}(H_f,\textit{Id}_{\wt{M}})\to 0$ when $f\to A$ in the $C^0$ topology.
	
	Notice that we are making an abuse of notation since the map $H_f$ is determined by $\wt{f}$ instead of $f$. But this is not a problem since given two lifts $\wt{f}_1$ and $\wt{f}_2$ there is an integer vector $v\in \Z^n$ such that $\wt{f}_1-\wt{f}_2=v$ and this implies that $H_{f_2}=H_{f_1}+w$, where $w=-(A-\textit{Id})^{-1}(v)$: 
	\begin{eqnarray*}
		H_{f_2} \circ \wt{f_2}(\wt{x}) &=& H_{f_1}(\wt{f_2}(\wt{x}))+w=H_{f_1}(\wt{f_1}(\wt{x})-v)+w \\
		&=& H_{f_1}\circ \wt{f_1}(\wt{x})-v+w= A \circ H_{f_1}(\wt{x})-v+w \\
		&=& A(H_{f_1}(\wt{x})+w)-Aw +w-v = A \circ H_{f_2}(\wt{x}) -(A-\textit{Id})^{-1}(w)-v \\
		&=& A \circ H_{f_2}(\wt{x}).
	\end{eqnarray*}
	Observe that the matrix $A-\textit{Id}$ is invertible since $A$ is hyperbolic. 
	
	Now given $f\in \cPH_A(\T^n)$ and $\wt{x}\in \R^n$ we are going to call the \textit{fiber} of $\wt{x}\in \R^n$ to the set $H_f^{-1}(H_f(\wt{x}))$. By the previous observation given two lifts $\wt{f}_1$ and $\wt{f}_2$ there is a vector $w\in \R^n$ such that $H_{f_2}=H_{f_1}+w$ and this implies that $$ H_{f_2}^{-1}(H_{f_2}(\wt{x})) = H_{f_1}^{-1}(H_{f_1}(\wt{x}))$$ and the fiber does not depend on the choice of the lift. As a result we can define the function \textit{size of the fiber} $$\Lambda: \cPH_A(\T^n)\times \R^n \to \R_{\geq 0} \ \ \ \text{by} \ \ \  \Lambda(f,\wt{x})=\text{diam}(H_f^{-1}(H_f(\wt{x}))).
	$$ We also note by 
	\begin{equation*} 
		\Lambda(f)=\sup\{\Lambda(f,\wt{x}):\wt{x}\in \R^n\}
	\end{equation*} to the supremum of sizes within all fibers. Since $d_{C^0}(H_f,\textit{Id}_{\R^n})<\infty$ this supremum is always finite and we get a well defined function $\Lambda: \cPH_A(\T^n) \to \R_{\geq 0}$. Notice that $f$ is conjugated to $A$ if and only if $\Lambda(f)=0$, since $H_f$ is always surjective and $\Lambda(f)=0$ is equivalent to injectivity.
	
	It's easy to see that the function $\Lambda$ does not depend continuously on $f$, however we have an upper semicontinuity property as the following lemma shows.
	
	\begin{lem} \label{controlfibras}
		Let $f\in \cPH_A(\mathbb{T}^n)$. Then for every $\e>0$ there exist $\delta>0$ such that:
		if $d_{C^0}(f,g)<\delta$ then $\Lambda(g)< \Lambda(f)+\e$. 
	\end{lem}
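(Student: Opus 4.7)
The plan is to argue by contradiction, combining the $C^0$ continuity of the semiconjugacy $f \mapsto H_f$ (noted in the paragraph preceding the lemma) with a compactness argument exploiting the $\Z^n$-equivariance $H_f(\wt{x}+v) = H_f(\wt{x}) + v$ for $v \in \Z^n$. Suppose, for some $\varepsilon > 0$, no such $\delta$ works. Then I would extract a sequence $g_n \to f$ in $C^0$ with $\Lambda(g_n) \geq \Lambda(f) + \varepsilon$. By the definition of $\Lambda(g_n)$ as the supremum of fiber diameters, for each $n$ I can pick $\wt{x}_n, \wt{y}_n \in \R^n$ in a common fiber of $H_{g_n}$ (i.e.\ $H_{g_n}(\wt{x}_n) = H_{g_n}(\wt{y}_n)$) with $|\wt{x}_n - \wt{y}_n| \geq \Lambda(g_n) - \tfrac{1}{n} \geq \Lambda(f) + \varepsilon - \tfrac{1}{n}$. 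Using equivariance, I translate the pair by an integer vector so that $\wt{x}_n \in [0,1]^n$; this preserves both $|\wt{x}_n - \wt{y}_n|$ and the property of lying in a common fiber of $H_{g_n}$.

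Next I note that $\{|\wt{x}_n - \wt{y}_n|\}$ stays uniformly bounded: any two points in the same fiber of $H_{g_n}$ differ by at most $2\, d_{C^0}(H_{g_n}, \mathrm{Id}_{\R^n})$ (write $\wt{x}_n - \wt{y}_n = (\wt{x}_n - H_{g_n}(\wt{x}_n)) - (\wt{y}_n - H_{g_n}(\wt{y}_n))$), and the latter stays bounded as $g_n \to f$ in $C^0$ by the continuity statement. Thus $\{\wt{y}_n\}$ is confined to a fixed compact region of $\R^n$, and after extracting subsequences $\wt{x}_n \to \wt{x}$ in $[0,1]^n$ and $\wt{y}_n \to \wt{y} \in \R^n$, with $|\wt{x} - \wt{y}| \geq \Lambda(f) + \varepsilon$.

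Finally, the $C^0$ continuity of $f \mapsto H_f$ together with $\Z^n$-equivariance yields uniform convergence $H_{g_n} \to H_f$ on any compact subset of $\R^n$, in particular on one containing all $\wt{x}_n$ and $\wt{y}_n$. Combined with continuity of $H_f$, this gives $H_{g_n}(\wt{x}_n) \to H_f(\wt{x})$ and $H_{g_n}(\wt{y}_n) \to H_f(\wt{y})$; passing to the limit in $H_{g_n}(\wt{x}_n) = H_{g_n}(\wt{y}_n)$ produces $H_f(\wt{x}) = H_f(\wt{y})$, so $(\wt{x}, \wt{y})$ lies in a single fiber of $H_f$. Therefore $\Lambda(f) \geq |\wt{x} - \wt{y}| \geq \Lambda(f) + \varepsilon$, the desired contradiction.

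The only subtlety worth flagging is bookkeeping on the lifts: for the $C^0$-proximity of $g_n$ to $f$ on $\T^n$ to translate into $C^0$-proximity of $H_{g_n}$ to $H_f$ on $\R^n$ (as opposed to a translate of $H_f$ by some vector $w$, per the paper's discussion of the ambiguity $\wt{f}_1 - \wt{f}_2 \in \Z^n$), I would fix once and for all the lift $\wt{f}$ and then, for $g$ sufficiently $C^0$-close to $f$, take the unique lift $\wt{g}$ with $d_{C^0}(\wt{g},\wt{f})$ small; this locks in a single choice of $H_g$ and makes the fiber $H_g^{-1}(H_g(\wt{x}))$ depend only on $g$, not on the lift. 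Once this normalization is in place, the compactness argument above is routine.
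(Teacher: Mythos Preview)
Your argument is correct and follows essentially the same contradiction scheme as the paper's proof: take a sequence $g_n \to f$ with pairs $\wt{x}_n,\wt{y}_n$ in a common $H_{g_n}$-fiber at distance $\geq \Lambda(f)+\varepsilon-o(1)$, pass to a limit using $C^0$-continuity of $f\mapsto H_f$, and obtain a fiber of $H_f$ of diameter $\geq \Lambda(f)+\varepsilon$. You are in fact more careful than the paper about the compactness step (translating $\wt{x}_n$ into $[0,1]^n$ and bounding $\wt{y}_n$ via $2\,d_{C^0}(H_{g_n},\mathrm{Id})$) and about normalizing the lifts, points the paper's proof simply glosses over.
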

	\begin{proof}
		Take $f\in \cPH_A(\T^n)$ and $\e>0$. Suppose by contradiction that the lemma is false. Then for every $k>0$ there is $g_k\in \cPH_A(\T^n)$ with $d_{C^0}(g_k,f)\leq 1/k$, and points $\wt{x_k}, \wt{y_k} \in \R^n$ such that $d(\wt{x_k},\wt{y_k})\geq \Lambda(f)+\e$ and $H_{g_k}(\wt{x_k})=H_{g_k}(\wt{y_k})$. We can assume that $\wt{x_k}\to \wt{x}$ and $\wt{y_k}\to \wt{y}$, and in consequence $d(\wt{x},\wt{y})\geq \Lambda(f)+\e$. 
		Since the map $g\mapsto H_g$ is continuous, for every $\delta>0$ there is $k_0>0$ such that for every $k\geq k_0$, we have $d_{C^0}(H_{g_k},H_f)<\delta$. Then by the triangular inequality we have
		\begin{eqnarray*}
			d(H_f(\wt{x}),H_f(\wt{y})) & \leq & d(H_f(\wt{x}),H_f(\wt{x_k}))+d(H_f(\wt{x_k}),H_{g_k}(\wt{x_k}))+d(H_{g_k}(\wt{x_k}),H_{g_k}(\wt{y_k})) \\
			& + & d(H_{g_k}(\wt{y_k}),H_f(\wt{y_k}))+d(H_f(\wt{y_k}),H_f(\wt{y})) \\
			& \leq & d(H_f(\wt{x}),H_f(\wt{x_k}))+2\delta + d(H_f(\wt{y}),H_f(\wt{y_k})) \to 2\delta
		\end{eqnarray*} and this implies $H_f(\wt{x})=H_f(\wt{y})$, since $\delta$ was arbitrary. As a result, the points $\wt{x}$ and $\wt{y}$ belong to the same fiber which implies $d(\wt{x},\wt{y})\leq \Lambda(f)$. But then we have $\Lambda(f)+\epsilon \leq d(\wt{x},\wt{y}) \leq \Lambda(f)$ which is a contradiction.
	\end{proof}

	Recall that a diffeomorphism $f:M \to M$ is said to be \textit{transitive} if there is $x\in M$ such that $\ol{\cO^+ (f,x)}=M$. Equivalently, $f$ is transitive if for every pair of open sets $U$ and $V$ there is $N\in \mathbb{Z}^+$ such that $f^N(U)\cap V \neq \emptyset$.

	Now we are ready to prove the main theorem of this section.
	
	\begin{thm}[Robust transitivity criterion] \label{teorobtran}
		Let $A\in \textnormal{SL}(n,\Z)$ be a hyperbolic matrix with a dominated splitting as in Equation \eqref{matrixA}. Take $f \in \cPH_A(\T^n)$ with $(d_1,d_2)$ SH-Saddle property where $d_1=\textnormal{dim}E^{ws}_A$ and $d_2=\textnormal{dim}E^{wu}_A$. Then there is $\rho=\rho(f)>0$ such that if $\Lambda(f)<\rho$ then $f$ is $C^1$ robustly transitive. In fact $C^1$ robustly topologically mixing.
	\end{thm}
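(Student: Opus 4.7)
The plan is to combine three ingredients: the SH-Saddle property (for both $\cW^{uu}_g$ and $\cW^{ss}_g$) to produce, inside forward (resp.\ backward) iterates of any open set, embedded disks of dimension $\dim E^u_A$ (resp.\ $\dim E^s_A$); the semi-conjugacy $h_g\circ g = A\circ h_g$ together with the topological mixing of the hyperbolic linear part $A$ on $\T^n$; and the light-map Corollary \ref{corcontieneabierto}, used in conjunction with the hypothesis $\Lambda(g)<\rho$, to transfer $A$-side intersections back to intersections in $\T^n$.

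First, I would fix a $C^1$-neighbourhood $\mathcal{V}$ of $f$ on which the $(d_1,d_2)$ SH-Saddle property holds with uniform constants $L,\lambda,\delta_1$ (Theorem \ref{shisopen}) and on which $\Lambda(g)<\Lambda(f)+\e$ (Lemma \ref{controlfibras}), with $\e$ chosen small relative to the threshold $\rho$ to be fixed later. For any $g\in\mathcal{V}$ and any open $U\subset\T^n$, I would build a $\dim E^u_A$-dimensional embedded disk $\Delta^u_n\subset g^n(U)$ for large $n$ by: (i) taking a small strong-unstable plaque inside $U$; (ii) iterating it forward until it contains a $\cW^{uu}_g$-ball of radius $L$, which by the SH-Saddle property meets $H^+_{\lambda,d_2}(g)$ at some $x^u$; (iii) applying Corollary \ref{corotamanodisco} to a small $d_2$-dimensional center disk tangent to $\cC^u(x^u)$ so that further iterates contain a $d_2$-dimensional center disk of diameter at least $2\delta_1$; and (iv) saturating this center disk with $\cW^{uu}_g$ to assemble a disk of dimension $d_2+\dim E^{uu}_A=\dim E^u_A$. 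The symmetric construction using the SH-Saddle property of $\cW^{ss}_g$ produces, for every open $V$, a $\dim E^s_A$-dimensional disk $\Delta^s_m\subset g^{-m}(V)$ for $m\geq N_V$.

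Next, I would project everything through $h_g$. Applying Corollary \ref{corcontieneabierto} to the restrictions of $h_g$ to small $n$-dimensional cubes $C_U\subset U$ and $C_V\subset V$ (chosen so that $d(C_V,V^c)>\Lambda(f)+\e$), the hypothesis $\Lambda(g)<\rho$ ensures that these restrictions are light enough for the images $W_U:=h_g(C_U)$ and $W_V:=h_g(C_V)$ to contain non-empty open subsets of $\T^n$. Since the hyperbolic matrix $A$ is topologically mixing on $\T^n$, there is $N_0$ with $A^n(W_U)\cap W_V\neq\emptyset$ for every $n\geq N_0$. For such $n$, pick $y_U\in W_U$ with $A^n(y_U)\in W_V$, write $y_U=h_g(x_U)$ with $x_U\in C_U\subset U$ and $A^n(y_U)=h_g(x_V)$ with $x_V\in C_V\subset V$. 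The intertwining $h_g(g^n(x_U))=A^n(h_g(x_U))=h_g(x_V)$ places $g^n(x_U)$ and $x_V$ in a common fiber of $h_g$, so $d(g^n(x_U),x_V)\leq\Lambda(g)<d(x_V,V^c)$ forces $g^n(x_U)\in V$. This yields $g^n(U)\cap V\neq\emptyset$ for every $n\geq N_0$, which is the desired robust topological mixing.

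The hardest step is the coherent calibration of the threshold $\rho=\rho(f)$: it has to be simultaneously smaller than the light-map parameter of Corollary \ref{corcontieneabierto} at a chosen universal cube scale (fixed by the SH-Saddle constants $L,\delta_1$), smaller than the depth-budget $d(C_V,V^c)$ reserved inside $V$ for the fiber diameter, and compatible with the $\e$-slack allowed by Lemma \ref{controlfibras}. The built-in lower bound $2\delta_1$ on the disks $\Delta^u_n,\Delta^s_m$ supplied by the SH-Saddle property is what makes this universal cube scale intrinsic to $f$, so that $\rho$ depends only on $f$. Promoting robust transitivity to full robust topological mixing for arbitrarily small open sets $V$ is the most delicate point and requires combining the forward and backward disk constructions, so that the light-map step can be applied on cubes drawn from a large backward iterate $g^{-m}(V)$ (in which the stable disk $\Delta^s_m$ has already grown to a definite size) rather than from $V$ itself.
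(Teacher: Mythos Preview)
Your disk construction in the first paragraph is exactly the right setup, but the argument in the second paragraph does not use it and, as you partly recognise, cannot be made uniform. The gap is in how you invoke Corollary~\ref{corcontieneabierto}: you apply it to $h_g$ restricted to $n$-dimensional cubes $C_U\subset U$, $C_V\subset V$, so the light-map parameter you get is $\rho(n,r)$ with $r$ the side of those cubes. Since $U,V$ are arbitrary open sets, $r$ is not bounded below, and your suggested fix of passing to $g^{-m}(V)$ does not help: backward iteration elongates $g^{-m}(V)$ only in the $E^s$-directions while shrinking it in the $E^u$-directions, so it never contains an $n$-cube of a size fixed in advance. The disks $\Delta^u_n,\Delta^s_m$ you built have dimensions $\dim E^u_A$ and $\dim E^s_A$, not $n$, so they cannot serve as domains for the $n$-dimensional light-map either.

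The paper's way out is to apply the light-map corollary not to $H_g$ but to the compositions $\Pi^s\circ H_g$ and $\Pi^u\circ H_g$, where $\Pi^s,\Pi^u$ are the orthogonal projections of $\R^n$ onto $E^{ss}_A\oplus E^{ws}_A$ and $E^{wu}_A\oplus E^{uu}_A$. Restricted to the lifts of $g^{-k}(D_1)$ and $g^{k}(D_2)$ (your $\Delta^s,\Delta^u$), these are maps between spaces of equal dimensions $\dim E^s_A$ and $\dim E^u_A$, and now the domains do contain cubes of the fixed size $\delta_1$ coming from Corollary~\ref{corotamanodisco}; hence one can set $\rho=\min\{\rho(\dim E^s_A,\delta_1),\rho(\dim E^u_A,\delta_1)\}$, which depends only on $f$. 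An extra step is needed to check that $\Pi^{s}\circ H_g$ is still $\rho$-light on these disks (the backward-expanding cone $\cC^s$ forces $H_g(\tilde D_1)$ to hit each translate of $E^{wu}_A\oplus E^{uu}_A$ at most once, so the only collapsing comes from the fibers of $H_g$). The output is that the $H_g$-images of the two disks contain topological disks with open projections onto the complementary subspaces $E^s_A$ and $E^u_A$; the hyperbolicity of $A$ then forces their $A$-iterates to intersect, and one transfers back to $g$ using that $H_g$ is at bounded distance from the identity together with the robustness of such complementary intersections, rather than the single-fiber argument you sketched.
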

	
	\begin{proof} 
		Take $f \in \cPH_A(\T^n)$ with $(d_1,d_2)$ SH-Saddle property where $d_1=\text{dim}E^{ws}_A$ and $d_2=\text{dim}E^{wu}_A$.	Let $\mathcal{V}$, $\lambda>1$ and $\delta_1>0$ be as in Theorem \ref{shisopen}.
		
		Let us define the following constants:
		\begin{eqnarray*}
			\rho_s &=& \rho(\text{dim}(E^{ss}_A\oplus E^{ws}_A),\delta_1) \\
			\rho_u &=& \rho(\text{dim}(E^{wu}_A\oplus E^{uu}_A),\delta_1) \\
			\rho &=& \min \{\rho_s,\rho_u\}
		\end{eqnarray*} where $\rho(*,\delta_1)$ are given by Proposition \ref{propLZadapted} for $*=\text{dim}(E^{ss}_A\oplus E^{ws}_A), \text{dim}(E^{wu}_A\oplus E^{uu}_A)$. We claim that the theorem holds for  this $\rho>0$ and for proving this we are going to find a $C^1$-neighbourhood $\cU(f)$ of $f$ such that every $g\in \cU(f)$ is transitive.  
		
		First notice that since $\Lambda(f)<\rho(f)$, by Lemma \ref{controlfibras} applied to $\e= \rho(f) - \Lambda(f)>0$, we know there is $\delta_0>0$ such that if $d_{C^0}(f,g)<\delta_0$ then $\Lambda(g)<\Lambda(f)+\e=\rho(f)$. 
		
		Now take $\cU(f)=\mathcal{V}\cap \{g\in \cPH_A(\T^n): d_{C^0}(f,g)<\delta_0\}$. We claim that every $g\in \cU(f)$ is transitive (in fact topologically mixing). In order to get transitivity, we have to prove that for any two open sets $U_1, U_2 \subset \T^n$ there is $k \in \Z^+$ such that $g^k(U_1)\cap U_2 \neq \emptyset$. 
		
		Take two points $x_1\in U_1$ and $x_2\in U_2$, and let $k_1 \in \N$ be such that $g^{-k_1}(U_1)\supset \cW^{ss}_g(g^{-k_1}(x_1),L)$ and $g^{k_1}(U_2)\supset \cW^{uu}_g(g^{k_1}(x_2),L)$. Take $x^s \in H^-_{\lambda,d_1}(g) \cap \cW^{ss}_g(g^{-k_1}(x_1),L)$ and $x^u \in H^+_{\lambda,d_2}(g) \cap \cW^{uu}_g(g^{k_1}(x_2),L)$ given by $(d_1,d_2)$ SH-Saddle property. Now take $D^s \subset \cW^c_g(x^s)$ a center disk of dimension $d_1$ tangent to $\cC^s(x^s)$ and $D^u \subset \cW^c_g(x^u)$ a center disk of dimension $d_2$ tangent to $\cC^u(x^u)$. We can take $D^s, D^u$ small enough such that $D^s \subset g^{-k_1}(U_1)$ and $D^u \subset g^{k_1}(U_2)$. Recall that $\cC^s$ and $\cC^u$ are the cones invariant for the past and the future respectively given by SH-Saddle property. Moreover, $\cC^s$ and $\cC^u$ uniformly expand vectors for the past and the future respectively. 		
		\begin{figure}[H]
			\begin{center}
				\includegraphics [width=15cm]{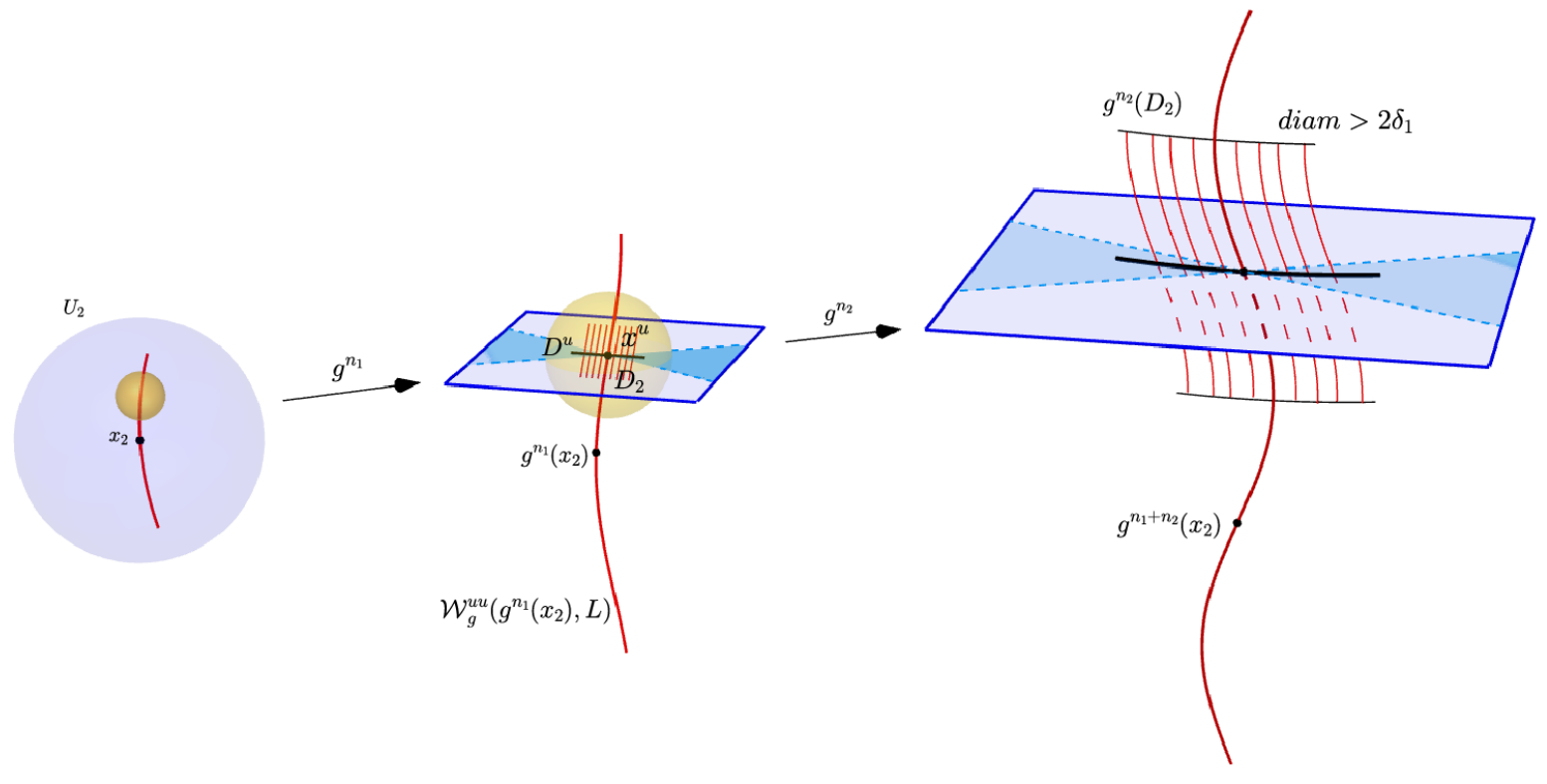}
				\caption{Obtaining a disk of diameter bigger than $2\delta_1$}
			\end{center}
		\end{figure}
		\vspace{-0.5cm}
		Now take $D_1=\cup_{x\in D^s} \cW^{ss}_g(x,l)$ and $D_2=\cup_{x\in D^u}\cW^{uu}_g(x,l)$. We can choose $l>0$ small enough such that $D_1 \subset g^{-k_1}(U_1)$ and $D_2 \subset g^{k_1}(U_2)$. 
		Notice that $D_1$ is a disk of dimension equal to $\text{dim}(E^{ss}_A\oplus E^{ws}_A)$ and $D_2$ is a disk of dimension equal to $\text{dim}(E^{wu}_A\oplus E^{uu}_A)$. Now by Corollary \ref{corotamanodisco} there is $k_2\in \N$ such that $g^{-k}(D^s)$ contains a disk of diameter bigger than $2\delta_1$ and $g^k(D^u)$ contains a disk of diameter bigger than $2\delta_1$ for every $k\geq k_2$. 
		
		Now let $\Pi^{s}:\R^n\to E^{ss}_A\oplus E^{ws}_A$  and $\Pi^{u}:\R^n\to E^{wu}_A\oplus E^{uu}_A$ be the orthogonal projections. Then the idea is to use Corollary \ref{corcontieneabierto} applied to the functions $\Pi^s \circ H_g$ and $\Pi^u \circ H_g$ to conclude that for every $k\geq k_2$ the images of the sets $g^{-k}(D_1)$ and $g^{k}(D_2)$ by $H_g$ contain topological disks of complementary dimensions and with the appropriate inclination. Then the hyperbolicity of the matrix $A$ will do the mixing, and we can translate this mixing of $A$ to the diffeomorphism $g$. 
		
		Observe that $g \in \cU(f)$ which implies that $\Lambda(g)<\rho$ and in particular we have that $H_g$ is $\rho$-light (see Definition \ref{dflight}). Moreover we claim the following.
		
		\begin{claim}
			The function $\Pi^{s}\circ H_g$ is $\rho$-light when restricted to $\wt{g}^{-k}(\wt{D_1})$ and the function $\Pi^u\circ H_g$ is $\rho$-light when restricted to $\wt{g}^{k}(\wt{D_2})$, for every $k\geq k_2$. 
		\end{claim}	
		\begin{proof}
			We are going to see the case $\Pi^{s}\circ H_g$ since the other one is symmetric. Now notice that $\wt{g}^{-k}(\wt{D_1})$ contains a disk of size bigger than $2\delta_1$ for every $k \geq k_2$ and the disk $\wt{g}^{-k}(\wt{D_1})$ is tangent to a cone $\cC^s$ which is uniformly expanding for the past. Thus by the semiconjugacy relation $H_g \circ \wt{g} =A \circ H_g$ we know that $H_g(\wt{D_1})$ can not intersect $E^{wu}_A\oplus E^{uu}_A$ more than once, otherwise there would be different points in $\wt{D_1}$ such that their distance by past iterates of $\wt{g}$ goes to zero, and this is impossible since the cones $\cC^s$ are expanding for the past. In consequence the fibers of $\Pi^s \circ H_g $ have the same size of the fibers of $H_g$, and so $\Pi^s \circ H_g$ is $\rho$-light restricted to $\wt{g}^{-k}(\wt{D_1})$ for every $k\geq k_2$. 
		\end{proof}
		
		To sum up, we have a continuous map $\Pi^s \circ H_g: \wt{g}^{-k}(\wt{D_1}) \to E^{ss}_A \oplus E^{ws}_A \simeq \R^{\text{dim}(E^{ss}_A\oplus E^{ws}_A)}$ such that its domain $\wt{g}^{-k}(\wt{D_1})$ contains a disk $[-\delta_1,\delta_1]^{\text{dim}(E^{ss}_A\oplus E^{ws}_A)}$ and by our choice of $\rho$ we have that $\rho \leq \rho(\text{dim}(E^{ss}_A\oplus E^{ws}_A),\delta_1)$. Then just notice that we are in hypothesys of Corollary \ref{corcontieneabierto} and therefore $\Pi^s\circ H_g(\wt{g}^{-k}(\wt{D_1})) \subset E^{ss}_A\oplus E^{ws}_A$ contains an open set for every $k\geq k_2$. The same argument shows that $\Pi^u \circ H_g(\wt{g}^{k}(\wt{D_2})) \subset E^{wu}_A\oplus E^{uu}_A$ contains an open set. 
		
		Since $A$ is a hyperbolic matrix and the topological disks have complementary dimensions and with the right inclination, we know there is $k_3 \in \N$ such that for every $k \geq k_3$ we have that $A^{k}(H_g(\wt{g}^{k_2}(\wt{D_2})))\cap (H_g(\wt{g}^{-k_2}(\wt{D_1}))+V_k) \neq \emptyset$ for some $V_k \in \Z^n$. This implies that $H_g\circ \wt{g}^k(\wt{g}^{k_2}(\wt{D_2}))\cap (H_g(\wt{g}^{-k_2}(\wt{D_1}))+V_k) \neq \emptyset$. Since $H_g$ is at bounded distance to the identity, we know that there is $k_4 \in \N$ such that for every $k \geq k_4$, we have $\wt{g}^k(\wt{g}^{k_2}(\wt{D_2}))\cap (\wt{g}^{-k_2}(\wt{D_1})+V_k) \neq \emptyset$. Then since $p:\R^n \to \T^n$ satisfies $p \circ \wt{g}=g \circ p$ we have that: 
		$$\emptyset \neq g^k(g^{k_2}(D_2))\cap g^{-k_2}(D_1) \subset g^{k+k_1+k_2}(U_2)\cap g^{-k_1-k_2}(U_1)$$ for every $k \geq k_4$ and this is equivalent to 
		$$ \emptyset \neq g^{k+2k_1+2k_2}(U_2)\cap U_1, \ \ \text{for every} \ k \geq k_4.$$ 
		Finally if we take $N=k_4+2(k_1+k_2)$ we have that $g^k(U_2)\cap U_1\neq \emptyset$ for every $k \geq N$ proving that $g$ is topologically mixing. This ends the proof. 
	\end{proof}
	
	\begin{cor} \label{coroSHtransitivo}
		Let $A\in \textnormal{SL}(n,\Z)$ be a hyperbolic matrix with a dominated splitting as in Equation \eqref{matrixA} and let $f \in \cPH_A(\T^n)$ with $(d_1,d_2)$ SH-Saddle property where $d_1=\textnormal{dim}E^{ws}_A$ and $d_2=\textnormal{dim}E^{wu}_A$. If $\Lambda(f)=0$ then $f$ is $C^1$ robustly transitive. 
	\end{cor}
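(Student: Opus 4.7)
The plan is to apply Theorem \ref{teorobtran} directly. The hypotheses on $A$, on $f \in \cPH_A(\T^n)$, and on the matching of indices $d_1 = \textnormal{dim}\,E^{ws}_A$ and $d_2 = \textnormal{dim}\,E^{wu}_A$ are exactly those required by the theorem, so there is a positive threshold $\rho = \rho(f) > 0$ such that $\Lambda(f) < \rho$ already implies that $f$ is $C^1$ robustly transitive (in fact, $C^1$ robustly topologically mixing). Assuming $\Lambda(f) = 0$, the inequality $\Lambda(f) < \rho(f)$ holds trivially, so the conclusion follows at once.

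The role of this statement is thus to record a conceptually clean sufficient condition: as noted right after the definition of $\Lambda$, the equality $\Lambda(f)=0$ is equivalent to the Franks semi-conjugacy $h_f$ being an actual topological conjugacy between $f$ and the linear Anosov $A$. The corollary therefore says that any $f \in \cPH_A(\T^n)$ which is simultaneously topologically conjugate to $A$ and carries the $(\textnormal{dim}\,E^{ws}_A,\textnormal{dim}\,E^{wu}_A)$ SH-Saddle property is automatically $C^1$ robustly transitive.

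There is no real obstacle to surmount beyond Theorem \ref{teorobtran} itself; all the technical input (the $C^1$-openness of the SH-Saddle property from Theorem \ref{shisopen}, the upper semi-continuity of $\Lambda$ from Lemma \ref{controlfibras}, and the light-map / open-image argument of Corollary \ref{corcontieneabierto}) has been absorbed into the proof of that theorem. Consequently, this corollary is the form in which the robust transitivity criterion will actually be invoked in the next section when building the examples of Theorem \ref{teoejemixed}, for which the constructed DA diffeomorphisms will be conjugate to their linear model and hence satisfy $\Lambda(f)=0$ by construction.
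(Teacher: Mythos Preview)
Your proof is correct and follows exactly the paper's own argument: note that $\rho(f)>0$ by Theorem \ref{teorobtran}, so $\Lambda(f)=0<\rho(f)$ trivially satisfies the criterion. The additional commentary you provide about the role of the corollary is accurate but not part of the paper's (two-line) proof.
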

	\begin{proof}
		Since $f$ has SH-Saddle property, we know that $\rho(f)>0$. Then we trivially have $\Lambda(f)=0<\rho(f)$ and we conclude by Theorem \ref{teorobtran}. 
	\end{proof}

	
	\section{Proof of Theorem \ref{teoejemixed}} \label{ssymplexample}
	In this section we prove Theorem \ref{teoejemixed}, i.e. we construct $C^1$ robustly transitive diffeomorphisms with any center dimension and with as many different behaviours on center leaves. Along the proof we are going to perform different isotopies depending on the type of local behaviour we are looking for, i.e. increase or decrease the index of a fixed point, mix two subbundles, etc. For the construction of the local isotopies, we are going to use an auxiliary function that will be used many times.

	\begin{lem} \label{lemauxfunction}
		Let $b>0$. Then for every $\epsilon>0$ there is a function $\beta: \R^+ \cup \{0\}\to \R$ such that:
		\begin{enumerate}
			\item $\beta$ is $C^{\infty}$ and decreasing. 
			\item $\beta$ is supported in $[0,\epsilon]$. 
			\item $\beta(0)=b$.
		\end{enumerate}
	\end{lem}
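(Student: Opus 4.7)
The plan is to construct $\beta$ as a logarithmic rescaling of a fixed smooth cutoff. The constraint $|\beta'(t)t|\leq \e$ forces $|\beta'(t)|\leq \e/t$, so any admissible drop from $b$ to $0$ must be carried out on a logarithmic scale near $0$: on an interval $[a,\e]$, the largest allowed total variation is $\int_a^\e (\e/t)\,dt = \e\ln(\e/a)$, and requiring this to exceed $b$ just means $a\leq \e\, e^{-b/\e}$, which is achievable for any $\e>0$.

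To carry this out, I would first fix once and for all a smooth non-increasing auxiliary function $\phi:\R\to[0,1]$ with $\phi\equiv 1$ on $(-\infty,0]$ and $\phi\equiv 0$ on $[1,\infty)$, and set $M:=\sup_{s\in\R}|\phi'(s)|$. Given $b>0$ and $\e>0$, choose $a\in(0,\e)$ small enough that $\ln(\e/a)\geq bM/\e$ (possible since $\ln(\e/a)\to\infty$ as $a\to 0^+$). Define
\[
\beta(t):= b\cdot \phi\!\left(\frac{\ln(t/a)}{\ln(\e/a)}\right)\text{ for }t>0, \qquad \beta(0):=b.
\]

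The verification of the three required properties is then direct. Smoothness at $t=0$ and $t=a$ follows from $\phi\equiv 1$ on $(-\infty,0]$, which makes $\beta\equiv b$ on $[0,a]$; elsewhere smoothness is immediate. The function is non-increasing because $\phi$ is non-increasing and its argument is increasing in $t$. The support lies in $[0,\e]$ because the argument of $\phi$ equals $1$ at $t=\e$ and exceeds $1$ beyond. Finally, $\beta(0)=b$ by construction, and for $t\in(a,\e)$ a chain-rule computation gives
\[
\beta'(t)\cdot t \;=\; \frac{b}{\ln(\e/a)}\,\phi'\!\left(\frac{\ln(t/a)}{\ln(\e/a)}\right),
\]
whose absolute value is bounded by $bM/\ln(\e/a)\leq \e$ by the choice of $a$; outside $(a,\e)$ the derivative vanishes identically, so $-\e\leq \beta'(t)t\leq 0$ holds throughout $[0,\infty)$.

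There is no real obstacle here: the only delicate point is that the allowed derivative bound shrinks as $t\to 0$, so the drop from $b$ to $0$ must concentrate near the origin. The logarithmic substitution $s=\ln(t/a)/\ln(\e/a)$ converts this into a bounded rescaling of a fixed cutoff, and picking $a$ sufficiently small absorbs simultaneously the constant $\|\phi'\|_\infty$ and the ratio $b/\e$.
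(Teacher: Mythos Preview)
Your proof is correct. Both your argument and the paper's rest on the same observation, namely that the constraint $|\beta'(t)t|\leq \e$ means $-\beta'\leq \e/t$, and since $\int_0^{r_0}\e/t\,dt$ diverges there is room to accumulate total variation $b$ on an arbitrarily short interval near the origin. The paper carries this out by directly choosing a smooth $\psi\leq \e/t$ supported in $[0,r_0]$ with $\int\psi=b$ (sketched via a figure) and setting $\beta(t)=b-\int_0^t\psi$. You instead make the substitution $s=\ln(t/a)/\ln(\e/a)$ to pull back a fixed cutoff $\phi$, which has the advantage of producing an explicit closed-form $\beta$ and making the verification of the derivative bound a one-line chain-rule computation. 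The two constructions are equivalent in spirit; yours is slightly more self-contained since it avoids appealing to a picture for the existence of $\psi$.
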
 
	\begin{proof}
		First take a $C^{\infty}$ function $\psi$ supported in $[0,\epsilon]$ such that $\int_0^{\epsilon}\psi(t)dt=b$. Now just take $\beta$ as: $\beta(t)=b-\int_0^t \psi(s)ds$. This function satisfies the lemma.
	\end{proof}

	\subsection{Expansive DA diffeomorphisms} \label{ssexpda}
	
	In this subsection we are going to build expansive DA diffeomorphisms which are partially hyperbolic, but not Anosov. 
	
	\begin{df}[Expansive homeomorphisms] \label{dfexpansive}
		Let $f:M\to M$ be a homeomorphism on a metric space $(M,d)$. We say that $f$ is expansive if there is $\gamma>0$ such that the following holds: if $x,y\in M$ are such that $d(f^n(x),f^n(y)) \leq \gamma$ for every $n \in \Z$, then $x=y$. We call $\gamma$ the expansivity constant.
	\end{df}
	
	In short, expansivity means that two different points in $M$ are $\gamma$ separated eventually in time. Hyperbolic diffeomorphisms are clearly expansive. Then the idea to construct a non-Anosov expansive diffeomorphism, is to start with a linear Anosov and introduce an isotopy in a small neighbourhood of a fixed point $p$, in order to make the derivative of $p$ restricted to some center subbundle equal to the identity, and keeping the dynamics hyperbolic in the rest of the manifold. We remark that dealing with expansivity is quite delicate, so the construction has to be made with some care. In order to prove expansivity we are going to use the following criterium due to J. Lewowicz.
	
	\begin{prop}[\cite{L}] \label{LemaLew}
		Let $f:M\to M$ be a homeomorphism on a compact metric space $(M,d)$. Suppose there is $\beta>0$ and a continuous funcion $V:\{ (x,y)\in M\times M: d(x,y)\leq \beta \} \to \R$ such that $V(x,x)=0$ for every $x\in M$. Let $\Delta V(x,y)=V(f(x),f(y))-V(x,y)$. If there is $\gamma>0$ such that $\Delta V(x,y)>0$ if $0<d(x,y)\leq \gamma$, then $f$ is $\gamma$-expansive. 
		
		The function $V$ is called a Lyapunov function.
	\end{prop}
	
	
	\subsubsection{Two dimensional center bundle}
	
	We begin with the case of a partially hyperbolic diffeomorphism with two dimensional center bundle. We are going to focus on the case where the center bundle $E^c_A$ behaves hyperbolic, since the case where it is completely contractive or expanding will be included later as a special case of the higher dimensional center case. In Lemma \ref{lemmixsub} we construct the example in $\R^2$ and then in Lemma \ref{ejendim4} we insert the example as the restriction of a partially hyperbolic diffeomorphism to one of its center leaves. 
	
	
	\begin{lem} \label{lemmixsub}
		Let $A\in \textnormal{M}_{2\times 2}(\R)$ be a hyperbolic matrix, such that $A(x,y)=(\lambda x, \mu y)$ with $0<\lambda<1<\mu$, and take $\e>0$. Then there exists a diffeomorphism $g:\R^2\to \R^2$ such that:
		
		
		\begin{itemize}
			\item $g(x,y)=A(x,y)$, for every $(x,y)\in B(0,\e)^c$.
			\item $Dg_{(0,0)}=\text{Id}$.
			\item There exists a Lyapunov function $V: \R^2 \times \R^2 \to \R$ such that $\Delta V((x_1,y_1),(x_2,y_2))>0$ for every pair of points $(x_1,y_1),(x_2,y_2) \in \R^2 \times \R^2$.
			
		\end{itemize}
	\end{lem}
	
	
	\begin{proof} 
		Take a hyperbolic matrix $A \in \textnormal{M}_{2 \times 2}(\R)$ as in the hypothesis and let $\e>0$. Let $\beta_{\lambda}$ be the function given by Lemma \ref{lemauxfunction} for $b=1-\lambda$ and its corresponding function $\psi_{\lambda}$, and let $\beta_{\mu}$ be the function given by Lemma \ref{lemauxfunction} for $b=\mu-1$ and its corresponding function $\psi_{\mu}$. Now we define the function $g:\R^2\to \R^2$ by the equation:
		\begin{equation} \label{eqdefg} g(x,y)=(\lambda x,\mu y) + (\beta_{\lambda}(r)x,-\beta_{\mu}(r)y) 
		\end{equation} where $r=x^2+y^2$. Notice that if $r\geq \epsilon$ then $g=A$. In particular $Dg_{(x,y)}=A$ for every $(x,y)$ such that $x^2+y^2\geq \epsilon$. In case $r<\epsilon$ the differential is:
		\begin{equation*} Dg_{(x,y)}=
			\begin{bmatrix}
				\lambda +\beta_{\lambda}(r) +2x^2 \beta_{\lambda}'(r) & 2xy\beta_{\lambda}'(r) \\
				-2xy\beta_{\mu}'(r) & \mu - \beta_{\mu}(r)-2y^2 \beta_{\mu}'(r) 
			\end{bmatrix}. 
		\end{equation*}
		In particular we have that $Dg_{(0,0)}=\textit{Id}$ and therefore $g$ is not hyperbolic. In case $r>0$ we have that
		$$ \lambda+ \beta_{\lambda}(r)+2x^2\beta_{\lambda}'(r)< \lambda + \beta_{\lambda}(0)=1 
		$$ and
		$$ \mu -\beta_{\mu}(r)-2y^2\beta_{\mu}'(r)> \mu -\beta_{\mu}(0)=1. 
		$$ 
		Now take the family of cones in $\R^2$
		$$ \cC^u(x,y)=\{ (a,b)\in \R^2: |a|\leq |b| \}.
		$$ We claim that this familiy of cones is $Dg$-invariant. Therefore, we have to prove that if $(a,b)\in \cC^u$ then $Dg(a,b)=(a_1,b_1)\in \cC^u$, and this occurs if and only if $|a_1|\leq |b_1|$. By the equations above we have that:
		\begin{eqnarray*}
			a_1 &=& a \left( \lambda+ \beta_{\lambda}(r)+2x^2\beta_{\lambda}'(r) \right) + b \left( 2xy\beta_{\lambda}'(r)  \right) \\
			b_1 &=& a \left( 	-2xy\beta_{\mu}'(r) \right) + b \left( \mu - \beta_{\mu}(r)-2y^2 \beta_{\mu}'(r) \right)
		\end{eqnarray*}
		Notice that if $r>\epsilon$ the cones are $Dg$-invariant since $g=A$. On the other hand, for points close to zero ($r$ small) this is not so clear. Therefore we have to take a little more care with the choice of the functions $\beta_{\lambda}$ and $\beta_{\mu}$, in particular with the functions $\psi_{\lambda}$ and $\psi_{\mu}$. 
		
		In order to find a suitable $C^{\infty}$ function $\psi_{\lambda}$, we are going to approximate it by two $C^0$ functions $\psi_{\lambda}^1$ and $\psi_{\lambda}^0$ by above and below, which implies that $\beta_{\lambda}$ will be approximated by below and above by their corresponding $C^1$ functions $\beta_{\lambda}^1$ and $\beta_{\lambda}^0$ given by Lemma \ref{lemauxfunction}. Then we use the density of $C^{\infty}$ functions to conclude. 
		
		Let $\rho>0$ be sufficienly small such that $3\rho<\epsilon$ and $2\rho^2<\text{min}\{1-\lambda, \mu-1\}$, and let $m\in \R^+$ be such that $2m\rho^2 <1-\lambda<2(m+1)\rho^2$. Now for $i=0,1$ we define the function $\psi_{\lambda}^i$ in the following way (see Figure \ref{bump1}):
		
		$$\psi_{\lambda}^i(r)= \left\{ \begin{array}{lll}
			(m+i)r &   \textit{if}  & r \in [0,\rho]  \\
			\\ (m+i)\rho &  \textit{if} & r \in (\rho, 2\rho ) \\
			\\ (m+i)(-r+3\rho)  &  \textit{if}  & r \in [2\rho,3\rho] 
		\end{array}
		\right.$$
		
		\begin{figure}[H]
			\begin{center}
				\includegraphics [width=9cm]{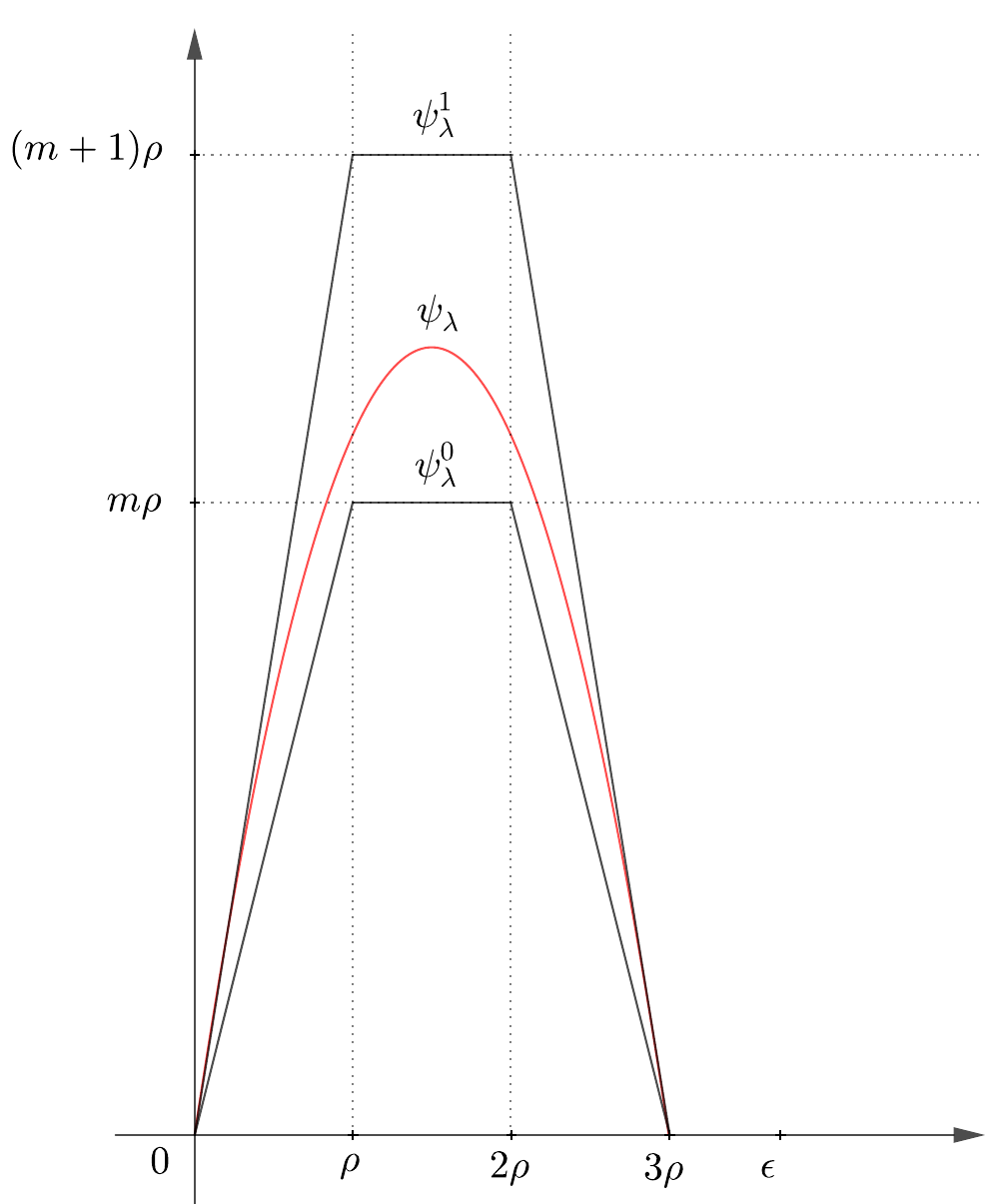}
				\caption{Bump functions $\psi_{\lambda}^0, \psi_{\lambda}^1$ and $\psi_{\lambda}$} \label{bump1}
			\end{center}
		\end{figure}
		Notice that: $$\int_0^{\epsilon}\psi_{\lambda}^0(s)ds=2m\rho^2 < 1-\lambda<2(m+1)\rho^2=\int_0^{\epsilon}\psi_{\lambda}^1(s)ds.$$ 
		Since $C^{\infty}$ functions are $C^0$-dense, we can find a $C^{\infty}$ function $\psi_{\lambda}$ such that 
		\begin{enumerate}
			\item supp$(\psi_{\lambda})\subseteq [0,\epsilon]$.
			\item $\psi_{\lambda}^0(s) \leq \psi_{\lambda}(s) \leq \psi_{\lambda}^1(s)$, for every $s\in[0,\epsilon]$.
			\item $\int_0^{\epsilon}\psi_{\lambda}(s)ds=1-\lambda$.
		\end{enumerate} 
		Notice that point (2) implies that $\beta_{\lambda}^1(r)\leq \beta_{\lambda}(r)\leq \beta_{\lambda}^0(r)$, for every $r\in [0,\epsilon]$. 
		\begin{rem}
			The function $\psi_{\lambda}$ above (Figure \ref{bump1}) is not $C^{\infty}$ at the points $0$ and $3\rho$. However, this can be fixed as follows. First we can defined $\psi_{\lambda}$ in the interval $(-\epsilon,0)$ by symmetry using the equation $\psi_{\lambda}(t)=-\psi_{\lambda}(-t)$. Then $\psi_{\lambda}$ is $C^{\infty}$ at $0$, and $\beta_{\lambda}$ become $C^{\infty}$ at $0$ (notice that 	$\beta_{\lambda}$ is obtained by integrating $\psi_{\lambda}$ only at the non-negative real line, so this extension to negative numbers is not a problem). To deal with the point $3\rho$, we only have to take a local perturbation of $\psi_{\lambda}$ at this point to have the desire properties. Since $3\rho$ is far enough from 0, this can be done easily.  
		\end{rem}	
		
		In the same way we can find a suitable $C^{\infty}$ function $\psi_{\mu}$ by approximating it by two $C^0$ functions $\psi_{\mu}^1$ and $\psi_{\mu}^0$ by above and below, which implies that $\beta_{\mu}$ will be approximated by below and above by their corresponding $C^1$ functions $\beta_{\mu}^1$ and $\beta_{\mu}^0$ according to Lemma \ref{lemauxfunction}. 
		
		Recall that $2\rho^2<\mu-1$, then let $n\in \R^+$ be such that $2n\rho^2<\mu-1<2(n+1)\rho^2$. Now for $i=0,1$ we define the function $\psi_{\mu}^i$ in the following way:
		$$\psi_{\mu}^i(r)= \left\{ \begin{array}{lll}
			(n+i)r &   \textit{if}  & r \in [0,\rho]  \\
			\\  (n+i)\rho &  \textit{if} & r \in (\rho, 2\rho) \\
			\\ (n+i)(-r+3\rho)  &  \textit{if}  & r \in [2\rho,3\rho] 
		\end{array}
		\right.$$
		In the same way as above we observe that: $$\int_0^{\epsilon}\psi_{\mu}^0(s)ds=2n\rho^2<\mu-1<2(n+1)\rho^2=\int_0^{\epsilon}\psi_{\mu}^1(s)ds.$$Since $C^{\infty}$ functions are $C^0$-dense, we can find a $C^{\infty}$ function $\psi_{\mu}$ such that: 
		\begin{enumerate}
			\item supp$(\psi_{\mu})\subseteq [0,\epsilon]$.
			\item $\psi_{\mu}^0(s) \leq \psi_{\mu}(s) \leq \psi_{\mu}^1(s)$, for every $s\in[0,\epsilon]$.
			\item $\int_0^{\epsilon}\psi_{\mu}(s)ds=\mu-1$.
		\end{enumerate} 
		Notice that point (2) implies that $\beta_{\mu}^1(r)\leq \beta_{\mu}(r)\leq \beta_{\mu}^0(r)$, for every $r\in [0,\epsilon]$. 
		
		Recall that we want to prove that the cones $\mathcal{C}^u$ are $Dg$-invariant. We are going to obtained the invariance of the cones for $C^1$ close diffeomorphisms $g^0$ and $g^1$ defined (in the same way as $g$) by Equation \eqref{eqdefg} but using the functions $\beta_{\lambda}^0$, $\beta_{\mu}^0$ and $\beta_{\lambda}^1$, $\beta_{\mu}^1$ respectively, instead of $\beta_{\lambda}$ and $\beta_{\mu}$. Notice that for $i=0,1$ the $C^1$ distance between $g$ and $g^i$ can be taken as small as we want.
		
		Now for $i=0,1$, in order to see the cones $\mathcal{C}^u$ are $Dg^i$-invariant, we have to prove that if $(a,b)\in \cC^u$ then $Dg^i(a,b)=(a_1,b_1)\in \cC^u$, and this happens if and only if $|a_1|\leq |b_1|$. We will obtain this inequality by studying different cases depending on $r$ which is the square of the distance of the point $(x,y)$ to the origin.  
		\newline
		\newline 
		\textbf{Case 1: $r\in [0,\rho]$.} \newline By our definitions above we have:
		$$\beta_{\lambda}^i(r)=1-\lambda -(m+i)\frac{r^2}{2} \ \ \text{and} \ \ \ (\beta_{\lambda}^i)'(r)=-(m+i)r.
		$$
		Then we have that:
		$$\lambda+ \beta_{\lambda}^i(r)+2x^2(\beta_{\lambda}^i) '(r)=1-(m+i)\frac{r^2}{2}-2x^2 (m+i)r
		$$
		and in consequence we obtain that $a_1$ is equal to:
		$$a_1 = a \left( 1-(m+i)\frac{r^2}{2}-2x^2 (m+i)r \right) + b \left(-2xy(m+i)r \right). 
		$$
		Since $|a|\leq|b|$, by taking absolute value and applying triangular inequality we get:
		\begin{eqnarray*}
			|a_1|&\leq &|b|\left( 1-(m+i)\frac{r^2}{2}-2x^2 (m+i)r \right) + |b|\left( 2|xy|(m+i)r \right) \\
			&=& |b|\left(1 -(m+i)\frac{r}{2}\left( r+4x^2 -4|xy| \right)  \right)  \leq |b| 
		\end{eqnarray*}
		where the last inequality holds as long as $r+4x^2-4|xy|=5x^2+y^2-4|xy|>0$. We claim that this is always the case: if $|xy|=xy$ we have to show that $5x^2+y^2-4xy>0$. By solving the second degree equation in $y$ we obtain that 
		$$y=\frac{4x\pm \sqrt{16x^2-20x^2}}{2} 
		$$ and this has no real roots. Since for $y=0$ we have $5x^2\geq 0$, we obtain the desire inequality. The case where $|xy|=-xy$ is completely the same since the discriminant in the equation above is the same. We conclude that $r+4x^2-4|xy|\geq0$, and moreover $|a_1|<|b|$ if $r>0$.
		
		In the same way we have that:
		$$\beta_{\mu}^i(r)=\mu-1 -\left( n+i\right)\frac{r^2}{2} \ \ \text{and} \ \ \ (\beta_{\mu}^i)'(r)=-\left(n+i\right)r
		$$ and then 
		$$\mu -\beta_{\mu}^i(r)-2y^2(\beta_{\mu}^i)'(r)=1+\left(n+i \right)\frac{r^2}{2}+2y^2\left( n+i \right)r.
		$$
		We then have that $b_1$ is equal to
		$$b_1=a \left(2xy\left( n+i \right)r\right)+ b\left(  1+\left(n+i \right)\frac{r^2}{2}+2y^2\left( n+i \right)r   \right).
		$$ Since $|a|\leq|b|$, taking absolute value and by the triangular inequality we have:
		\begin{eqnarray*}
			|b_1|&\geq& |b|\left( 1+\left(n+i \right)\frac{r^2}{2}+2y^2\left( n+i \right)r \right)-|b| \left(2|xy|\left( n+i \right)r\right) \\
			&=& |b|\left( 1+\frac{r}{2}\left(n+i \right)\left(  r+4y^2-4|xy|\right)\right) \geq |b|
		\end{eqnarray*}
		where the last inequality holds as long as: $r+4y^2-4|xy|=x^2+5y^2-4|xy|\geq 0$. This is exactly the same equation we solve above, and thus we conclude that $|b_1|\geq |b|$ and moreover, $|b_1|>|b|$ if $r>0$. Then we conclude that $|a_1|\leq |b|\leq|b_1|$.
		\newline
		\newline
		\textbf{Case 2: $r\in [\rho,2\rho]$.} \newline In this case we obtain that $\beta_{\lambda}^i$  verifies:
		$$\beta_{\lambda}^i(r)=1-\lambda-(m+i)\frac{\rho^2}{2} -(m+i)\rho(r-\rho) \ \ \text{and} \ \ \ (\beta_{\lambda}^i)'(r)=-(m+i)\rho.
		$$
		Then we have that:
		$$\lambda+ \beta_{\lambda}^i(r)+2x^2(\beta_{\lambda}^i)'(r)=1-(m+i)\frac{\rho^2}{2} -(m+i)\rho(r-\rho) -2x^2(m+i)\rho
		$$
		therefore $a_1$ is equal to:
		$$a_1 = a \left( 1-(m+i)\frac{\rho^2}{2} -(m+i)\rho(r-\rho) -2x^2(m+i)\rho  \right) + b \left( -2xy(m+i)\rho  \right). 
		$$
		Since $|a|\leq|b|$, taking absolute value and applying triangular inequality we get:
		\begin{eqnarray*}
			|a_1|&\leq & |b| \left( 1-(m+i)\frac{\rho^2}{2} -(m+i)\rho(r-\rho) -2x^2(m+i)\rho \right)\\ &+& |b| \left( 2|xy|(m+i)\rho \right) \\
			&=& |b|\left( 1-(m+i)\frac{\rho^2}{2} -(m+i)\rho(r-\rho) -2x^2(m+i)\rho + 2|xy|(m+i)\rho \right) \\	
			&=& |b|\left(1-\left(\frac{(m+i)\rho}{2}\right)\left( 6x^2+2y^2-4|xy|-\rho \right) \right) \leq |b|
		\end{eqnarray*}
		where the last inequality holds as long as $6x^2+2y^2-4|xy|-\rho \geq 0$. Notice that 
		$$6x^2+2y^2-4|xy|-\rho=(x^2+y^2-\rho)+(5x^2+y^2-4|xy|).$$
		The first term in the right expression is greater or equal to zero since $\rho \leq r$. The second term $5x^2+y^2-4|xy|$ is exactly the same equation we solve in case 1. 
		
		In the same way we have that:
		$$\beta_{\mu}^i(r)=\mu-1-\frac{\rho^2}{2}\left(n+i \right)-(r-\rho)\rho\left(n+i\right) \ \ \text{and} \ \ \ (\beta_{\mu}^i)'(r)=-\left(n+i \right)\rho
		$$
		and then we obtain:
		$$\mu-\beta_{\mu}^i(r)-2y^2(\beta_{\mu}^i)'(r)=1+\frac{\rho^2}{2}\left(n+i\right)+(r-\rho)\rho\left( n+i \right)+2y^2\left( n+i \right)\rho.
		$$
		We then have that $b_1$ is equal to
		$$b_1=a \left(2xy\left( n+i \right) \rho\right)+ b\left(1+\frac{\rho^2}{2}\left(n+i\right)+(r-\rho)\rho\left( n+i \right)+2y^2\left( n+i \right)\rho \right).
		$$ Since $|a|\leq|b|$, taking absolute value and by the triangular inequality we have:
		\begin{eqnarray*}
			|b_1|&\geq& |b|\left(1+\frac{\rho^2}{2}\left(n+i\right)+(r-\rho)\rho\left( n+i \right)+2y^2\left( n+i \right)\rho\right)\\ &-& |b| \left( 2|xy|\left( n+i \right) \rho \right) \\
			&=& |b|\left( 1+ \frac{\rho^2}{2}\left( n+i\right)+ (r-\rho)\rho(n+i)+2y^2(n+i)\rho-2|xy|(n+i)\rho \right) \\
			&=& |b|\left( 1+\left(\frac{(n+i)\rho}{2}\right)\left(2x^2+6y^2-\rho-4|xy|	\right)\right) \geq |b|
		\end{eqnarray*}
		by the same estimates than above. Moreover we have that $|b_1|>|b|$ if $r>0$. Then we conclude that $|a_1|\leq|b| \leq |b_1|$.
		\newline
		\newline
		\textbf{Case 3: $r\in [2\rho,3\rho]$} \newline This is the simplest case, since we are far enough to zero and so we omit the calculations.

		\vspace{0.3cm}
		To sum up we have proved that for $i=0,1$ the cones $\mathcal{C}^u$ are $Dg^i$-invariant. In fact we have proved that if $(a,b)\in \cC^u$ then $Dg^i(a,b)=(a_1,b_1)$ verifies $|a_1|\leq |b_1|$. Now just notice that the inequality $|a_1|\leq |b_1|$ depends on the values of the entries of $Dg^i$. Since the entries of $Dg$ are bounded by above and below by the entries of $Dg^i$, by taking suitable triangular inequalities, it is easy to see that the inequality $|a_1|\leq|b_1|$ also holds for $Dg$, proving that the cones $\mathcal{C}^u$ are also $Dg$-invariant.
		
		Now if we take the norm $\norm{(a,b)}_1:=\max \{|a|,|b|\}$ in $\R^2$, we have that vectors in $\cC^u$ are expanded for the future: if $v=(a,b)\in \cC^u$ then $|a|\leq|b|$ and thus  $\norm{v}_1=|b|$. Since $Dg(v)=(a_1,b_1) \in \cC^u$, this implies that $\norm{Dg(v)}_1=|b_1|$ and we have just proved that $|b|\leq |b_1|$. This implies that $\norm{Dg(v)}_1 \geq \norm{v}_1$ and moreover  $\norm{Dg_{(x,y)}(v)}_1 > \norm{v}_1$ if $(x,y)\neq (0,0)$. 
		
		Next we define the Lyapunov function $V:\R^2\times \R^2\to \R$ given by
		$$ V((x_1,y_1),(x_2,y_2))=(y_1-y_2)^2-(x_1-x_2)^2.
		$$ 
		
		Let $P=(x_1,y_1)$ and $Q=(x_2,y_2)$ be two different points in $\R^2$. Then $V(P,Q)=(y_1-y_2)^2-(x_1-x_2)^2$. Let us denote by $g(P)=(x_1',y_1')$ and $g(Q)=(x_2',y_2')$, then $V(g(P),g(Q))=(y_1'-y_2')^2-(x_1'-x_2')^2$.
		
		Now take the function $\varphi:[0,1]\to \R$ given by $\varphi(t)=\Pi_2(g(tP+(1-t)Q))$, where $\Pi_2:\R^2\to \R$ is the projection in the second coordinate. Notice that $\varphi(0)=y_2'$, $\varphi(1)=y_1'$ and $\varphi$ is differentiable. Then by the Mean value Theorem we have:
		$$|y_2'-y_1'|=|\varphi(0)-\varphi(1)|=|\varphi'(t_0)|=|\Pi_2 \circ Dg_{z_0}(P-Q)|=|Dg_{z_0}(0,y_1-y_2)|>|y_1-y_2|
		$$ for some $z_0$. In the same way we get that $|x_1'-x_2'|<|x_1-x_2|$, and this implies that $\Delta V(P,Q)>0$. This ends the proof of the lemma.  \qed
	\end{proof}
	
	\begin{cor} \label{corejemplo}
		Let $A\in \textnormal{SL}(2,\Z)$ be a hyperbolic matrix and take $\e>0$ sufficiently small. Then there is a diffeomorphism $g:\T^2 \to \T^2$ such that:
		\begin{itemize}
			\item $g(x,y)=A(x,y)$ for every $(x,y) \in B(0,\e)^c$.
			\item $Dg_{(0,0)}=\text{Id}$. 
			\item $g$ belongs to the $C^1$ boundary of Anosov diffeomorphisms.
			\item $g$ is expansive and conjugated to $A$.
		\end{itemize}
	\end{cor}		
	\begin{proof}
		Let $A\in \textnormal{SL}(2,\Z)$ be as in the hypothesis and suppose the eigenvalues of $A$ are $\lambda$ and $\mu$ with: $0<\lambda<1<\mu$. Let $E^s$ be the eigenspace associated to $\lambda$, and let $E^u$ be the eigenspace associated to $\mu$. Then we have that $\R^2=E^s\oplus E^u$. Now for $\e>0$ and indentifying $\R^2$ with $E^s\oplus E^u$ we take the map $g:\R^2\to \R^2$ given by Lemma \ref{lemmixsub}. We can take $\epsilon$ small enough so we can send the map $g$ to the quotient $\T^2=\R^2/\Z^2$ (notice that we are making an abuse of notation between $g$ and its lift to $\R^2$).  
		
		We know that the map $g$ has a Lyapunov function $V:\R^2\times \R^2\to \R$ given by
		$$ V((x_1,y_1),(x_2,y_2))=(y_1-y_2)^2-(x_1-x_2)^2
		$$ 
		which verifies that $\Delta V((x_1,y_1),(x_2,y_2))>0$ for every pair of different (and sufficiently close) points $(x_1,y_1),(x_2,y_2) \in \R^2$. Then by Proposition \ref{LemaLew} the map $g$ is expansive. Notice that we are only interested in what happens in a fundamental domain, so the non-compactness of $\R^2$ is not a problem.  
		
		Finally since $g$ fails to be hyperbolic only at $(0,0)$ we know that $g$ is in the $C^1$ boundary of Anosov diffeomorphisms. Then by Corollary 6.2 in \cite{L} $g$ is conjugated to $A$. Moreover this implies $g$ is infinite expansive in $\R^2$ and in particular, the semiconjugacy $H_g$ is in fact injective and $\Lambda(g)=0$. 
	\end{proof}

	Now that we have the map $g:\R^2\to\R^2$ constructed in Lemma \ref{lemmixsub}, we can build a partially hyperbolic example in $\T^4$ with a center leaf which behaves exactly like $g$.   
	
	\begin{lem} \label{ejendim4}
		Let $A\in \textnormal{SL}(4,\Z)$ be a matrix with four eigenvalues $\lambda^{ss},\lambda,\mu,\mu^{uu}$ such that $ 0<\lambda^{ss}<\lambda<1<\mu<\mu^{uu}$ and take $\e>0$. Then there is a partially hyperbolic diffeomorphism $f:\T^4\to \T^4$ with a splitting of the form $\R^4=E^{ss}_f \oplus E^c_f \oplus E^{uu}_f$, such that $\textnormal{dim}E^c_f=2$ and verifies the following:
		\begin{itemize}
			\item $f(x)=Ax$ for every $B(0,\e)^c$.
			\item $f$ is expansive and conjugated to $A$.
			\item $Df_0|_{E^c_f}=\text{Id}$.
		\end{itemize} 
	\end{lem}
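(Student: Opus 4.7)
The plan is to promote the two-dimensional example $g$ of Lemma \ref{lemmixsub} to a partially hyperbolic map on $\T^4$ by inserting it on the center factor while keeping the strong directions purely linear. Decompose $\R^4 = E^{ss}_A \oplus E^c_A \oplus E^{uu}_A$ along eigenspaces and write points as $(w,v,z)$ with $w\in E^{ss}_A$, $v\in E^c_A$, $z\in E^{uu}_A$. Let $g:\R^2\to\R^2$ be the map produced by Lemma \ref{lemmixsub} applied to $A|_{E^c_A}$ with parameter $\e/2$, and fix a smooth bump $\phi:[0,\infty)\to[0,1]$ with $\phi(0)=1$ and support in $[0,(\e/2)^2]$. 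Define
\begin{equation*}
f(w,v,z) = \bigl(\lambda^{ss} w,\ A|_{E^c_A} v + \phi(w^2+z^2)\bigl[g(v)-A|_{E^c_A} v\bigr],\ \mu^{uu} z\bigr).
\end{equation*}
The perturbation vanishes unless both $w^2+z^2<(\e/2)^2$ and $|v|^2<(\e/2)^2$, so its support sits inside $B_{\R^4}(0,\e)$ and $f=A$ off this ball; if $\e$ is small enough, $f$ descends to $\T^4$. At the origin $Df_0 = \textnormal{diag}(\lambda^{ss},\textnormal{Id}_{\R^2},\mu^{uu})$, so in particular $Df_0|_{E^c_f}=\textnormal{Id}$.

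For partial hyperbolicity, observe that the $w$-coordinate of $f$ depends only on $w$ and the $z$-coordinate only on $z$, so $Df_{(w,v,z)}$ is block-triangular with diagonal blocks $\lambda^{ss}$, $D_v\bigl(A|_{E^c_A}+\phi(g-A|_{E^c_A})\bigr)$, and $\mu^{uu}$. The estimates from the proof of Lemma \ref{lemmixsub} on $Dg$ keep the norm of the center block below $\mu^{uu}$ and its co-norm above $\lambda^{ss}$ provided $\e$ is small, while the off-diagonal entries (which involve $\phi'$ and $v$) have size $O(\e)$. A standard cone-field argument then yields the $Df$-invariant dominated splitting $E^{ss}_f\oplus E^c_f\oplus E^{uu}_f$ close to the linear one, with $\textnormal{dim}E^c_f=2$.

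The real obstacle is expansivity. By Lemma \ref{lemmixsub}, the center factor admits two complementary cone fields $\cC^u=\{|a|\leq|b|\}$ and $\cC^s=\{|b|\leq|a|\}$ whose union is all of $E^c_A$; $\cC^u$ is $Dg$-invariant and strictly expands every nonzero vector in forward time, and symmetrically $\cC^s$ is $Dg^{-1}$-invariant and strictly expands every nonzero vector in backward time, with strict inequality away from the origin. Transporting these cones to $E^c_f$ via the proximity of $E^c_f$ to $E^c_A$, and combining with the uniform contraction in $E^{ss}_f$ and uniform expansion in $E^{uu}_f$, every nonzero tangent vector lies in a cone that is strictly expanded by some power of $Df^{\pm 1}$. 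A standard Lewowicz-style argument then shows that two orbits satisfying $d(f^n x,f^n y)<\delta$ for all $n\in\Z$ must have vanishing separation in every hyperbolic direction, giving $x=y$.

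The delicate point here is precisely that $Df_0|_{E^c_f}=\textnormal{Id}$, so the forward expansion in $\cC^u$ degenerates at the fixed point $0$. This is controlled by the explicit estimates of Lemma \ref{lemmixsub} guaranteeing strict expansion off the origin, together with the fact that any orbit other than $\{0\}$ itself spends positive frequency outside $B(0,\e)$ (where $f=A$ is linear Anosov), so enough iterates see genuine hyperbolic expansion to preclude coexistence of nearby orbits.
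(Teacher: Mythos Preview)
Your construction is exactly the paper's: insert the planar map $g$ of Lemma \ref{lemmixsub} into the center factor, damped by a bump in the strong coordinates, obtain the same block-triangular differential, and note that the linear strong cones remain $Df$-invariant once $\lambda^{ss},\mu^{uu}$ are pushed far enough from $1$. The paper's expansivity argument is the single line ``$Df$ is hyperbolic outside $0$, hence $f$ is expansive''; your cone-field elaboration is in the same spirit and more informative, but the last step---that every nontrivial orbit spends positive frequency outside $B(0,\e)$---is neither proved nor needed (and is dubious as stated: a point on the center leaf of $0$ lying in the stable set of $0$ for $g$ has forward orbit converging to $0$, hence forward frequency zero outside the ball). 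The cleaner finish is to drop the frequency claim and argue directly: two full orbits that stay $\delta$-close must, by the uniform strong hyperbolicity, differ only in the center direction, and then the strict cone-expansion inequalities of Lemma \ref{lemmixsub} (which hold at every point with $r>0$, hence along any orbit other than the fixed point) give the separation; equivalently, just quote that $g$ itself was already shown to be expansive.
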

	\begin{proof}
		Let $A\in \textnormal{SL}(4,\Z)$ be a matrix with four eigenvalues $\lambda^{ss},\lambda,\mu,\mu^{uu}$ such that:
		$$ 0<\lambda^{ss}<\lambda<1<\mu<\mu^{uu}.$$ 
		We can assume that in the basis given by the eigenspaces associated to the eigenvalues we have that:  
		$A(x,y,z,t)=(\lambda x, \mu y, \lambda^{ss}z,\mu^{uu}t)$. Take the same functions $\beta_{\lambda}$ and $\beta_{\mu}$ as in Lemma \ref{lemmixsub} and define the map  $f:\R^4\to\R^4$ by:
		$$f(x,y,z,t)=(\lambda x,\mu y, \lambda^{ss} z, \mu^{uu} t) + \rho(w)(\beta_{\lambda}(r)x,-\beta_{\mu}(r)y,0,0)
		$$ where $\rho$ is a bump function supported in $[0,\e]$ and $w=z^2+t^2$. We can take $\epsilon$ small enough so we can send the map $f$ to the quotient $\T^4=\R^4/\Z^4$. 
		
		If $\norm{(x,y,z,t)}\geq \e$ we have that $f=A$. For points with $\norm{(x,y,z,t)} < \e$ the differential of $f$ at a point $(x,y,z,t)$ is:
		
		\begin{equation*} \begin{bmatrix}
				\lambda +\rho(w)\left( \beta_{\lambda}(r) +2x^2 \beta_{\lambda}'(r) \right) & \rho(w)\left(2xy\beta_{\lambda}'(r)\right) & 2xz\rho'(w)\beta_{\lambda}(r) & 2tz\rho'(w)\beta_{\lambda}(r) \\
				-\rho(w)\left(2xy\beta_{\mu}'(r)\right) & \mu -\rho(w)\left( \beta_{\mu}(r)-2y^2 \beta_{\mu}'(r) \right) & -2yz\rho'(w)\beta_{\mu}(r) & 2yt\rho'(w)\beta_{\mu}(r) \\
				0 & 0 & \lambda^{ss} & 0 \\
				0 & 0 & 0 & \mu^{uu}
			\end{bmatrix} 
		\end{equation*}
		In this case the subspace $E^c_f=\{(x,y,0,0)\}$ is $Df$ invariant, and it is quite direct to see that $Df|_{E^c_f}$ is basically $Dg$ like above (we have to deal with the function $\rho$ but is not a problem). In particular we have that: 
		\begin{equation*} \label{difenfix} Df_0=
			\begin{bmatrix}
				1 & 0 & 0 & 0 \\
				0 & 1 & 0 & 0 \\
				0 & 0 & \lambda^{ss} & 0 \\
				0 & 0 & 0 & \mu^{uu}
			\end{bmatrix}.
		\end{equation*}	 
		The strong bundles are not going to be the canonical ones, but if we ask to the strong eigenvalues $\lambda^{ss}$ and $\mu^{uu}$ to be sufficiently far away from 1 (and we can do this by iterating the matrix), the same strong cones for the matrix $A$ are going to be $Df$-invariant.

		Finally in the same way as in Lemma \ref{lemmixsub} and Corollary \ref{corejemplo}, by taking the Lyapunov function $V:\R^4 \times \R^4 \to \R$ given by
		$$ V((x_1,y_1,z_1,t_1),(x_2,y_2,z_2,t_2))=(y_1-y_2)^2 -(x_1-x_2)^2 +(t_1-t_2)^2-(z_1-z_2)^2 
		$$
		it is easy to see that $\Delta V((x_1,y_1,z_1,t_1),(x_2,y_2,z_2,t_2))>0$ for every pair of different (and sufficiently close) points $(x_1,y_1,z_1,t_1),(x_2,y_2,z_2,t_2) \in \R^4$. Then by Proposition \ref{LemaLew} we conclude that $f:\T^4\to \T^4$ is an expansive diffeomorphism. Moreover $f$ is in the $C^1$ boundary of Anosov diffeomorphisms and by Corollary 6.2 in \cite{L}, $f$ is conjugated to $A$ (which implies $\Lambda(f)=0$).
	\end{proof}
	
	\begin{rem}
		The construction in Lemma \ref{ejendim4} can be made with no restriction on the dimensions of the strong subbundles (which were one dimensional in the example above). Indeed, the construction only uses the local isotopy in dimension 2 we made in Lemma \ref{lemmixsub}, and the domination of the external strong subbundles.
	\end{rem}

	\subsubsection{Higher dimensional center bundle}
	
	We now deal with the case where the center bundle has dimension bigger than 2. We are going to treat the case where the center bundle $E^c_A$ is contractive (every center eigenvalue has modulus smaller than 1) since it is the only case that matters to our purposes. The case where the center bundle $E^c_A$ is expanding (every center eigenvalue has modulus bigger than 1) is completely analogous. We remark that the proof works as well for the case where the center is two dimensional. 
	
	In Lemma \ref{lemahdc} we construct the example in $\R^k$ and then in Lemma \ref{lemgeneral} we insert the example as the center leaf of a partially hyperbolic diffeomorphism like we did before. 
	
	\begin{lem} \label{lemahdc}
		Let $A\in \textnormal{M}_{k\times k}(\Z)$ be a diagonal matrix with $k$ eigenvalues such that $$ 0<\lambda_1 \leq \dots \leq \lambda_k <1.
		$$ Then for every $\e>0$ there is a diffeomorphism $g:\R^k\to \R^k$ such that: 
		\begin{itemize}
			\item $g(x)=Ax$ for every $x\in B(0,\e)^c$.
			\item $\norm{Dg_x}<1$ for every $x\neq 0$.
			\item $Dg_0=\text{Id}$.
		\end{itemize} 
	\end{lem}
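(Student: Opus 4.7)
I would mimic the construction of Lemma \ref{lemmixsub}, now perturbing all $k$ eigendirections of $A$ simultaneously. In the eigenbasis of $A$, set
\begin{equation*}
	g(x_1,\dots,x_k) = (\lambda_1 x_1 + \beta_1(r)x_1,\,\dots,\,\lambda_k x_k + \beta_k(r)x_k),
\end{equation*}
where $r = x_1^2 + \cdots + x_k^2$ and each $\beta_i$ is given by Lemma \ref{lemauxfunction} with $b = 1 - \lambda_i$ and support in $[0,\e]$. The first and third bullets are immediate: outside $B(0,\e)$ every $\beta_i$ vanishes, so $g = A$, and at the origin $(Dg_0)_{ii} = \lambda_i + \beta_i(0) = 1$ with vanishing off-diagonals, hence $Dg_0 = \textnormal{Id}$.

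The content of the lemma is the expansivity statement, which as in Lemma \ref{lemmixsub} I would establish by verifying that $Dg_x$ is a strict contraction at every $x \neq 0$ (equivalently, $g^{-1}$ uniformly expands every tangent vector, in line with the stable SH-Saddle framework of Definition \ref{defshss}). Differentiating,
\begin{equation*}
	(Dg_x)_{ii} = \lambda_i + \beta_i(r) + 2x_i^2\beta_i'(r),\qquad (Dg_x)_{ij} = 2x_ix_j\beta_i'(r)\ \ (i\neq j).
\end{equation*}
Since $\beta_i$ is strictly decreasing on its support, $\lambda_i+\beta_i(r) < 1$ for every $r > 0$, and adding $2x_i^2\beta_i'(r) \leq 0$ preserves this bound. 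The off-diagonal entries satisfy $|(Dg_x)_{ij}| \leq r|\beta_i'(r)| \leq \e$ using the Lemma \ref{lemauxfunction} estimate $|\beta'(t)t| \leq \e$.

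To close the argument I would replay the three-region analysis from the proof of Lemma \ref{lemmixsub}, choosing each $\psi_i$ piecewise linear (ascending on $[0,\rho]$, constant on $[\rho,\rho+l]$, descending on $[\rho+l,2\rho+l]$) and estimating $\|Dg_x v\|_\infty$ when $v$ achieves its sup norm in some coordinate $i_0$. The same quadratic inequality as in Lemma \ref{lemmixsub} (of the form $r + \alpha x_i^2 - \alpha|x_ix_j| \geq 0$) controls the cross-coupling, yielding $|(Dg_x v)_{i_0}| < |v_{i_0}|$ strictly for $r > 0$. The main obstacle, relative to the two-dimensional hyperbolic case, is the bookkeeping: there are now $k(k-1)$ off-diagonal terms rather than two. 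However, since every direction is contracting, no cancellation between stable and unstable blocks is needed, and choosing $\rho$, $l$, and the $\e$ from Lemma \ref{lemauxfunction} sufficiently small in terms of $k$ and $\min_i(1-\lambda_i) > 0$ keeps $Dg_x$ strictly contracting at every $x \neq 0$. Combined with $g = A$ outside $B(0,\e)$, this yields the expansivity claim.
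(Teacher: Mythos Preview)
Your construction has a genuine gap for $k\geq 3$: the three-region analysis of Lemma~\ref{lemmixsub} does \emph{not} scale, because the single off-diagonal term in dimension two becomes $k-1$ off-diagonal terms, and their total defeats the contraction margin near the origin. Concretely, with your piecewise-linear $\psi_i$ in the region $r\in[0,\rho]$ one computes the row-$i$ absolute sum of $Dg_x$ to be
\[
1-\frac{(1-\lambda_i)r}{\rho(\rho+l)}\Bigl(\tfrac{r}{2}+2x_i^2-2|x_i|\textstyle\sum_{j\neq i}|x_j|\Bigr),
\]
so you need $\tfrac{r}{2}+2x_i^2-2|x_i|\sum_{j\neq i}|x_j|\geq 0$. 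At the point $x=(s,\dots,s)$ this quantity equals $s^2\bigl(4-\tfrac{3k}{2}\bigr)$, which is \emph{negative} for every $k\geq 3$. Hence $\|Dg_x\|_\infty>1$ along the diagonal direction arbitrarily close to the origin, and no choice of $\rho,l,\e$ repairs this: the obstruction is that your diagonal entries are forced to approach $1$ (since $Dg_0=\textit{Id}$) while the off-diagonal coupling is of the same order as $1-(\text{diagonal})$. Your sentence ``yielding $|(Dg_xv)_{i_0}|<|v_{i_0}|$'' is also insufficient as stated, since controlling only the $i_0$-th output coordinate does not bound $\|Dg_xv\|_\infty$.

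The paper avoids exactly this difficulty by a two-step construction. First it uses a \emph{single} $\beta$ to push all eigenvalues to a common value $\lambda$ with $\lambda_k<\lambda<1$; because the target is strictly below $1$, there is room $1-\lambda>0$ to absorb the $k-1$ off-diagonal terms, and one chooses the parameter $c<\tfrac{1-\lambda}{k(\lambda-\lambda_1)}$ in Lemma~\ref{lemauxfunction} precisely for this. This first map $g_1$ is then a strict contraction everywhere, with $g_1(x)=\lambda x$ on a small ball $B(0,\delta)$. Second, on $B(0,\delta)$ one interpolates $g_1$ with the \emph{radial} map $h(x)=(1-\|x\|^2)x$; radiality makes the interpolant send spheres to strictly smaller spheres, so it remains contractive while achieving $Dg_0=\textit{Id}$. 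The key idea you are missing is this intermediate stop at $\lambda<1$ together with the radial finishing move.
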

	\begin{proof}
		Take a matrix $A\in M_{k\times k}(\Z)$ as above. Then the eigenvalues of $A$ verify that:
		$$ 0<\lambda_1 \leq \dots \leq \lambda_k <1.
		$$
		Fix $\e>0$ small, and take $\lambda$ such that $\lambda_k < \lambda < 1$. In particular $\lambda > \lambda_j$ for every $j=1,\dots,k$. Now take a number $c\in (0,\e)$ such that: $c<\frac{1-\lambda}{k(\lambda-\lambda_1)}$.
		Now, for this $c>0$ take the function $\beta$ given by Lemma \ref{lemauxfunction} for $b=1$. In particular, the function $\beta$ verifies:
		\begin{itemize}
			\item $\beta$ is $C^{\infty}$ and decreasing. 
			\item $\beta$ is supported in $[0,\e]$. 
			\item $\beta(0)=1$.
		\end{itemize}
		Moreover, we can ask for $\beta$ to be equal to 1 in a small interval $[0,\delta]$. Now we can define the map $g_1:\R^k \to \R^k$ by
		$$g_1(x_1,\dots,x_k)=A(x_1,\dots,x_k)+\beta(r)((\lambda-\lambda_1)x_1,\dots, (\lambda-\lambda_k)x_k)
		$$ where $r=x_1^2+\dots +x_k^2$. Since $supp(\beta)\subseteq [0,\e]$ we have that if $\norm{x}>\e$ then $g_1=A$. The differential of $g_1$ in a point $x$ is:
		\begin{equation*} D(g_1)_x=
			\begin{bmatrix}
				\lambda_1 & \ & \ & \ \\
				\ & \ddots & \ & \ \\
				\ & \ & \ddots & \ \\
				\ & \ & \ & \lambda_k
			\end{bmatrix}+ \beta(r)
			\begin{bmatrix}
				\lambda-\lambda_1 & \ & \ & \ \\
				\ & \ddots & \ & \ \\
				\ & \ & \ddots & \ \\
				\ & \ & \ & \lambda-\lambda_k
			\end{bmatrix}+M(x)
		\end{equation*} where $M(x)$ is the matrix given by
		\begin{equation*}M(x)=2\beta'(r)
			\begin{bmatrix}
				(\lambda-\lambda_1)x_1^2 & (\lambda-\lambda_1)x_1x_2 & \dots & (\lambda-\lambda_1)x_1x_k \\
				(\lambda-\lambda_2)x_1x_2 & (\lambda-\lambda_2)x_2^2 & \dots & (\lambda-\lambda_2)x_2x_k \\
				\vdots & \vdots & \ddots & \vdots \\
				(\lambda-\lambda_k)x_1x_k & (\lambda-\lambda_k)x_2x_k & \dots & (\lambda-\lambda_k)x_k^2
			\end{bmatrix}.	
		\end{equation*}
		In particular, since $\beta(0)=1$ we have that $D(g_1)_0=A+\beta(0)(\lambda \textit{Id} -A)+M(0)=\lambda \textit{Id}$. Now take a point $x\in \R^k$ and a vector $v\in \R^k$, then we have that:
		$$D(g_1)_x(v)=Av+\beta(r)(\lambda \textit{Id}-A)v+M(x)v. 
		$$
		Assume the vector $v$ is equal to $v=(a,\dots,a)\in \R^k$ for a given $a\in \R$, and denote by $D(g_1)_x(v)=(a_1,\dots,a_k)$. If we prove that $|a_j|<|a|$ for every $j=1,\dots,k$, we obtain that $D(g_1)_x$ is a contraction (by taking the norm of the maximum). Let's take a look at the first coordinate $a_1$: 
		\begin{equation*}
			a_1=a\left(\lambda_1+\beta(r)(\lambda-\lambda_1)+2\beta'(r)(\lambda-\lambda_1)\sum_{j=1}^{k}x_1x_j\right).
		\end{equation*}
		By taking absolute value, and applying the triangular inequality we obtain:
		\begin{equation*}
			|a_1|\leq|a|\left(|\lambda_1+\beta(r)(\lambda-\lambda_1)|+2|\beta'(r)(\lambda-\lambda_1)|\sum_{j=1}^{k}|x_1x_j|\right).
		\end{equation*}
		Notice that $0\leq (x_i+x_j)^2=x_i^2+x_j^2+2x_ix_j$ and this implies $2|x_ix_j|\leq x_i^2+x_j^2\leq r$. Now recall that: $|\beta'(r)|\leq \frac{c}{r} < \frac{1-\lambda}{k(\lambda-\lambda_1)r}
		$ and in particular we have that 
		$$ 2|\beta'(r)(\lambda-\lambda_1)|\sum_{j=1}^{k}|x_1x_j| < 1-\lambda.
		$$
		This implies that:
		\begin{equation*}
			|a_1|<|a|\left(|\lambda_1+\beta(r)(\lambda-\lambda_1)|+ 1-\lambda \right).
		\end{equation*}
		Since $\beta$ is a decreasing function, we have that $1=\beta(0)\geq \beta (r)$ and then: 
		\begin{equation*}
			|a_1|<|a|\left(|\lambda_1+\beta(r)(\lambda-\lambda_1)|+ 1-\lambda \right) \leq |a|\left(|\lambda_1+(\lambda-\lambda_1)|+ 1-\lambda \right)=|a|.
		\end{equation*}
		The exact same calculation shows that $|a_j|<|a|$ for every $j=1,\dots,k$. This shows that $D(g_1)_x$ is a contraction (for the norm of the maximum) for every $x\in \R^k$ and in particular, $g_1$ is expansive. Notice that since $\beta(r)=1$ for every $r\in [0,\delta]$, we have that $g_1(x)=\lambda x$ for every $x\in B(0,\delta)$. Now take the function $h:\R^k\to \R^k$ given by $h(x)=(1-r)x$ where $r=\norm{x}^2$ and consider a bump function $\rho:[0,+\infty) \to \R$ such that:
		\begin{itemize}
			\item $\rho(t)=1$ for every $t\in [0,\delta/2]$. 
			\item $\rho(t)=0$ for every $t\geq \delta$.
		\end{itemize} 
		Now we define $g:\R^k\to \R^k$ given by the equation:
		\begin{equation*}
			g(x)=\rho(r)h(x)+(1-\rho(r))g_1(x)
		\end{equation*} where  $r=\norm{x}^2$. 
		The first observation is that if $\norm{x}\geq \delta$ then $g(x)=g_1(x)=Ax$. On the other hand, if $r=\norm{x}^2\leq \delta$ then we have that $g_1(x)=\lambda x$ and therefore $$g(x)=\rho(r)h(x)+(1-\rho(r))\lambda x=[\rho(r)(1-r)+(1-\rho(r))\lambda]x$$ and the function $g$ is radial. Denote by $\alpha(r):=\rho(r)(1-r)+(1-\rho(r))\lambda$, then it is direct to see that $$ \alpha(r)=\rho(r)(1-r)+(1-\rho(r))\lambda=\rho(r)(1-r-\lambda)+\lambda \leq 1-r.$$
		As a result $g$ sends every sphere of radius $R$ to a sphere of radius $\alpha(R)R$ which is strictly smaller than $R$. This implies that $Dg_x$ is a contraction for every $x\in B(0,\delta)$. To see this, just notice that given $x\in \R^k$ we have that $T_x\R^k=T_x S_{\norm{x}}+ \langle x \rangle$ where $S_{\norm{x}}$ is the sphere centered at 0 of radius $\norm{x}$. The same happens with $g(x)$, i.e. $T_{g(x)}\R^k=T_{g(x)} S_{\alpha(\norm{x})\norm{x}}+ \langle x \rangle$ and the differential of $g$ at $x$ restricted to this subspace is exactly $$Dg_x|_{T_xS_{\norm{x}}}=\alpha(\norm{x})\textit{Id}$$ which is a contraction. The other direction $\langle x \rangle$ is exactly the same, and there therefore $Dg_x$ is a contraction. To finish the proof just observe that if $r=\norm{x}^2<\delta/2$ we have that $g(x)=(1-r)x$ and in particular $Dg_0=\textit{Id}$. 
	\end{proof}
	
	Now by applying the same trick as in Lemma \ref{ejendim4} (with a suitable bump function) we can embed the example above as the center leaf of a higher dimensional manifold. We thus obtain the following result whose proof we omit since it is exactly the same as the one of the lemma we just mentioned.
	
	\begin{lem} \label{lemgeneral}
		Let $A\in \textnormal{SL}(n,\Z)$ be a symmetric matrix with a splitting of the form $\R^n=E^{ss}_A\oplus E^c_A \oplus E^{uu}_A$ s.t. $\textnormal{dim}E^c_A=k$ and $E^c_A$ is the eigenspace associated to the eigenvalues $0<\lambda_1\leq \dots \leq \lambda_k<1$. 
		Then for every $\e>0$ small, there is $f\in \cPH_A(\T^n)$ such that: \begin{itemize}
			\item $f(x)=Ax$ for every $x\in B(0,\e)^c$.
			\item $f$ is expansive and conjugated to $A$.
			\item $Df_0|_{E^c_f}=\text{Id}.$
		\end{itemize}	
	\end{lem}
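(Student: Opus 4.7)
The plan is to embed the map $g:\R^k\to\R^k$ from Lemma \ref{lemahdc} into the center direction on $\R^n$, using a bump function on the strong directions so that the perturbation is supported in a small Euclidean ball and thus descends to $\T^n$. Work in the basis adapted to the splitting $\R^n=E^{ss}_A\oplus E^c_A\oplus E^{uu}_A$, so that $A$ is diagonal and we write points as $(u,v,w)$ with $u\in E^{ss}_A$, $v\in E^c_A$, $w\in E^{uu}_A$. Let $g(v)=Av|_{E^c_A}+R(v)$ be the decomposition of the map from Lemma \ref{lemahdc}, where $R$ is the nonlinear correction supported in the $\e$-ball of $E^c_A$. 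Let $\rho:\R_{\geq 0}\to[0,1]$ be a smooth bump that equals $1$ on $[0,\delta_1]$ and vanishes on $[\delta_2,\infty)$, with $\delta_2<\e$ small. Define
\[
f(u,v,w)=A(u,v,w)+\rho(\|u\|^2+\|w\|^2)\,\bigl(0,R(v),0\bigr).
\]
Outside $B(0,\e)$ we have $f=A$, so $f$ descends to a diffeomorphism of $\T^n$.

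The first step is to check that the central fiber $\{0\}\oplus E^c_A\oplus\{0\}$ is $f$-invariant and that the restriction coincides with $g$, which is immediate from the formula and ensures $Df_0|_{E^c_f}=\mathrm{Id}$. Next, compute $Df_{(u,v,w)}$: it is block lower/upper triangular up to two cross-terms coming from $2\rho'(\cdot)(u,0,w)^t\otimes R(v)$, plus the central block which equals $Dg_v$ modulated by $\rho(\|u\|^2+\|w\|^2)\in[0,1]$. The strong diagonal blocks are exactly the $E^{ss}_A$ and $E^{uu}_A$ parts of $A$, so the only danger is that the cross-terms destroy invariance of the strong cone fields of $A$. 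This is handled by the standard trick already used in Lemma \ref{ejendim4}: replace $A$ by an iterate $A^N$ so that $\lambda^{ss}$ and $\mu^{uu}$ (the strong eigenvalues) dominate both $\|Dg\|$ and the size of the cross-terms (the latter is bounded uniformly in $\e$ since $R$ and $\rho'$ are bounded). Then the standard narrow cones around $E^{ss}_A$ and $E^{uu}_A$ remain $Df^{\pm 1}$-invariant and are uniformly contracted/expanded; by Proposition \ref{fshifffksh} this suffices for partial hyperbolicity of $f$, with $\dim E^*_f=\dim E^*_A$ for $*=ss,c,uu$.

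Expansivity is then argued as follows. Choose the adapted norm $\|(u,v,w)\|_\infty=\max\{\|u\|,\|v\|,\|w\|\}$. Outside $B(0,\e)$ the map is the linear Anosov $A$, hence expansive there with a uniform expansivity constant. Inside $B(0,\e)$, the strong blocks of $Df$ already contract/expand the corresponding coordinates strictly, while the central block is $\rho\cdot Dg_v+(1-\rho)A|_{E^c_A}$, which by Lemma \ref{lemahdc} (and the fact that $A|_{E^c_A}$ is itself a contraction) strictly contracts $v$-coordinates at every nonzero point. Combining these three, for any two orbits that stay close for all times the corresponding strong and center coordinate differences must both go to zero, forcing the orbits to coincide. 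Thus $f$ is expansive.

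The main obstacle I expect is the combination of the cross-terms $\rho'(\|u\|^2+\|w\|^2)$ with the central perturbation $R(v)$: these spoil the strict block structure of $Df$ and could in principle destroy the strong cone invariance or the expansivity on nearby fibers. The remedy, as above, is to take an iterate $A^N$ so that the strong eigenvalues overwhelm the (uniformly bounded) cross-terms, which is permitted because both partial hyperbolicity and expansivity are preserved under taking iterates and roots. All the bookkeeping is then a direct adaptation of the two-dimensional-center argument in Lemma \ref{ejendim4}.
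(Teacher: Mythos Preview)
Your proposal is correct and follows exactly the same route the paper takes: the paper's own proof of this lemma is omitted and simply refers back to the bump-function embedding trick of Lemma~\ref{ejendim4}, which is precisely what you have written out in detail (including the iterate-to-strengthen-the-strong-bundles argument). One minor slip: Proposition~\ref{fshifffksh} concerns the SH-Saddle property rather than partial hyperbolicity, but the fact you need (that partial hyperbolicity passes to and from iterates) is standard and does not affect the argument.
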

	
	We now proceed to finish the proof of Theorem \ref{teoejemixed}. We begin with the case where $n=4$ since it is quite direct for our previous results and illustrates the general ideas. We then prove the general case.
	
	\subsection{Proof for case $n=4$}
	
	Take a matrix $A\in \textnormal{SL}(4,\Z)$ with four eigenvalues $\lambda^{ss},\lambda,\mu,\mu^{uu}$ such that:
	$$ 0<\lambda^{ss}<\lambda<1<\mu<\mu^{uu}.$$ 
	This induces a splitting of the form $\R^4=E^{ss}\oplus E^{ws}\oplus E^{wu}\oplus E^{uu}$ and we take the center bundle as $E^c=E^{ws}\oplus E^{wu}$. We can assume that in the basis given by the eigenspaces associated to the eigenvalues we have that:  
	$$A(x,y,z,t)=(\lambda x, \mu y, \lambda^{ss}z,\mu^{uu}t).$$ Moreover, we can assume that the linear Anosov $A$ has four different fixed points: $Fix(A)=\{p_0,p_1,p_2,p_3\}$ (we are making an abuse of notation here, by calling $A$ instead of $f_A$, the induced map in the torus). We just have to iterate the matrix a few times in order to have four different fixed points.
	
	Now notice that the procedure made in Subsection \ref{ssexpda} works as well. First, for every fixed point $p_j$ (with $j=0,1,2$) take a small neighbourhood $U_j$ (notice that we are not going to perturb $p_3$ since it already has index 2). We can take them small enough to be disjoint. Second, just notice that the isotopy procedure we made in Lemma \ref{lemmixsub} is only local, and therefore it can be applied in different disjoint neighbourhoods. Hence the same proof as in Lemma \ref{ejendim4} shows that we can make an isotopy whose support is contained in $U_0 \cup U_1 \cup U_2$ in order to get a partially hyperbolic diffeomorphism $f_1: \T^4\to \T^4$ such that:
	\begin{itemize}
		\item $f_1(x)=Ax$, for every $x\in (U_0 \cup U_1 \cup U_2)^c$.
		\item $f_1$ is hyperbolic outside $Fix(f_1)=\{p_0,p_1,p_2\}$.
		\item $D(f_1)_{p_j}|_{E^c_{f_1}(p_j)}=\textit{Id}$ for $j=0,1,2$.
	\end{itemize}   
	Since $f_1$ is hyperbolic outside $Fix(f_1)=\{p_0,p_1,p_2\}$, once again we can find an appropriate Lyapunov function in order to apply Proposition \ref{LemaLew} and get that $f_1$ is expansive. Since $f_1$ is in the $C^1$ boundary of Anosov diffeomorphisms, by Corollary 6.2 in \cite{L} we have that $f_1$ is conjugated to $A$ which implies $\Lambda(f_1)=0$. 
	
	The first point shows that $f_1$ is SH-saddle of index (1,1). To see this, we just have to observe that the same proof of Lemma \ref{DAesSH} shows that $f_1$ have the SH-Saddle property as well. In that proof, the only property we use is the fact that, for a point $p$ outside $U$ and given a small $\delta>0$, there is always a point $p_1$ such that $$\cW^{uu}_{f_1}(p_1,\delta)\subset f_1(\cW^{uu}_{f_1}(p,\delta))\cap (U^c)$$ and by an induction argument we find a point whose forward orbit never meets $U$, and the same happens for the past. Hence $f_1$ is SH-Saddle of index (1,1). 
	By the same arguments, by taking the strong bundles $E^{ss}$ and $E^{uu}$ sufficiently contractive and expanding, and taking the neighbourhoods $U_j$ sufficiently small, we also have this property, i.e. for every point $p$ outside $U_0\cup U_1\cup U_2$, and given a small $\delta>0$, there is always a point $p_1$ such that $$\cW^{uu}_{f_1}(p_1,\delta)\subset f_1(\cW^{uu}_{f_1}(p,\delta))\cap (U_0\cup U_1\cup U_2)^c.$$
	Then, we can find a point that never meets $U_0\cup U_1\cup U_2$ for the future, and the same for the past. In short, $f_1$ has the (1,1) SH-Saddle property.  
	
	Now since $f_1$ is SH-Saddle we have that $\rho(f_1)>0$, and by expansiveness we also have $\Lambda(f_1)=0$. Then a direct application of Theorem \ref{teorobtran} shows that $f_1$ is $C^1$ robustly transitive. Let's call $\cU_1$ to the $C^1$ neighbourhood of $f_1$ such that every $h\in \cU_1$ is transitive. 
	
	To end the proof of the theorem, we are going to change the indexes of the fixed points $p_0$ and $p_1$, and to put a complex eigenvalue in $p_2$. First take the two matrixes $B_0$ and $B_1$ given by: 
	
	\begin{equation*} B_0=
		\begin{bmatrix}
			1-\eta & 0 & 0 & 0 \\
			0 & 1-\eta & 0 & 0 \\
			0 & 0 & \lambda^{ss} & 0 \\
			0 & 0 & 0 & \mu^{uu}
		\end{bmatrix}
		\ \ B_1=
		\begin{bmatrix}
			1+\eta & 0 & 0 & 0 \\
			0 & 1+\eta & 0 & 0 \\
			0 & 0 & \lambda^{ss} & 0 \\
			0 & 0 & 0 & \mu^{uu}
		\end{bmatrix}.
	\end{equation*} 
	Then for $\eta$ sufficiently small we have that the matrices $B_0$ and $B_1$ are $\e$ close to $D(f_1)_{p_0}$ and $D(f_1)_{p_1}$ respectively.
	Now in order to mix the two center subbundles, take the matrix $B_2$ with the form:
	\begin{equation*} B_2=
		\begin{bmatrix}
			a & b & 0 & 0 \\
			-b & a & 0 & 0 \\
			0 & 0 & \lambda^{ss} & 0 \\
			0 & 0 & 0 & \mu^{uu}
		\end{bmatrix}
	\end{equation*}  
	where $a\pm ib$ are the complex eigenvalues of $B_2$. It is possible to take $a$ and $b$ such that $a$ is close to 1, $b$ is close to 0 (the modulus of $a\pm ib$ can be smaller, bigger or equal to 1 for our purposes). For suitable values of $a$ and $b$ we can assure that $B_2$ is $\e$ close to $D(f_1)_{p_2}$.  
	Then by Franks Lemma \cite{Fr1}, there is a diffeomorphism $f\in \cU_1$ such that:
	\begin{itemize}
		\item $f(x)=f_1(x)$ for every $x\in (U_0 \cup U_1 \cup U_2)^c$.
		\item $f(p_j)=f_1(p_j)=p_j$ for $j=0,1,2$.
		\item $Df_{p_j}=B_j$ for $j=0,1,2$.
	\end{itemize}
	In particular $index(p_0)=3$ and $index(p_1)=1$ (recall that $index(p_3)=2$). Since $Df_{p_2}$ has a center complex eigenvalue, the center bundle of $f$ can not be decomposed into two 1-dimensional subbundles. To sum up, the map $f:\T^4 \to \T^4$ is a $C^1$ robustly transitive derived from Anosov diffeomorphism, and verifies all the properties of Theorem \ref{teoejemixed}.

	\begin{rem}
		All the examples constructed in Lemmas \ref{lemmixsub}, \ref{ejendim4}, \ref{lemahdc} and \ref{lemgeneral} are not generic, since they are not hyperbolic but conjugated to its linear part which is a hyperbolic matrix. In fact, they are in the $C^1$ boundary of Anosov diffeomorphisms.
	\end{rem}
	
	\subsection{Proof of the general case}
	For the proof of the general case we proceed like we did above. Let $A\in \textnormal{SL}(n,\Z)$ be a hyperbolic symmetric matrix with a splitting of the form:
	\begin{equation*}
		\R^n=E^{ss}_A\oplus E^{ws}_A \oplus E^{wu}_A \oplus E^{uu}_A
	\end{equation*} 
	where we take $E^c_A:=E^{ws}_A \oplus E^{wu}_A$ as the center bundle. Since the matrix $A$ is symmetric we know the subbundles $E^{ws}_A$ and $E^{wu}_A$ can be decomposed into 1-dimensional subbundles, i.e.:
	\begin{equation*}
		\R^n=E^{ss}_A\oplus E^{ws}_1\oplus \dots \oplus E^{ws}_m \oplus E^{wu}_1 \oplus \dots \oplus E^{wu}_l \oplus E^{uu}_A
	\end{equation*} where $E^{w*}_j$ is the eigenspace associated to the eigenvalue $\lambda^{*}_j$ for $*=s,u$. In particular the eigenvalues verify:
	$$\lambda^s_1\leq \dots \leq \lambda^s_m<1<\lambda^u_1 \leq \dots \leq \lambda^u_l.$$
	In short $m=\text{dim}E^{ws}_A$, $l=\text{dim}E^{wu}_A$ and $k=\text{dim}E^c_A=m+l$.

	Notice that 0 is a fixed point of $A$ and $index(0)=\text{dim}E^{ss}_A+m$. Now by iterating the matrix if necessary we can take $k=m+l$ different fixed points of $A$ (here we are making an abuse of notation once again), $Fix(A)=\{p_1,\dots, p_m,q_1,\dots,q_l\}$. For every $j=1,\dots,m$ take a neighbourhood $U_j$ of $p_j$, and for every $j=1,\dots,l$ take a neighbourhood $V_j$ of $q_j$. We can assume that they are small enough to be disjoint. 
	
	Like before, we proceed like in Subsection \ref{ssexpda}. Notice that the isotopies we made in that subsection were only local. Therefore a direct application of Lemma \ref{lemgeneral} implies that there is a partially hyperbolic diffeomorphism $g:\T^n\to \T^n$ with a splitting of the form 
	$$T\T^n=E^{ss}_g\oplus E^{ws}_g\oplus E^{wu}_g \oplus E^{uu}_g$$
	where $\text{dim}E^{*}_A=\text{dim}E^{*}_g$ for $*=ss,ws,wu,uu$, and moreover:
	\begin{itemize}
		\item $g(x)=Ax$ for every $x\in \left( U_1 \cup \dots \cup U_m \cup V_1 \cup \dots \cup V_l\right)^c$.
		\item $g$ is hyperbolic outside $Fix\{g\}$. 
		\item $Dg_{p_j}|_{E^{ws}_g}=\textit{Id}$ for every $j=1,\dots,m$.
		\item $Dg_{q_j}|_{E^{wu}_g}=\textit{Id}$ for every $j=1,\dots,l$.
	\end{itemize}
	Once again, by taking the neighborhoods $U_i$ and $V_j$ sufficiently small, the same argument in Lemma \ref{DAesSH} implies $g$ has the SH-Saddle property of index $(m,l)$. The second point above says that $g$ is expansive and in the $C^1$ boundary of Anosov diffeomorphisms. Then by Corollary 6.2 in \cite{L}, $g$ is conjugated to $A$ which implies $\Lambda(g)=0$. Then by Theorem \ref{teorobtran} (or Corollary \ref{coroSHtransitivo}) we have that $g$ is $C^1$ robustly transitive. Let $\cU$ be the $C^1$ neighbourhood of $g$ such that every $h\in \cU$ is transitive, and let $\e>0$ be such that $B_{C^1}(g,\e)\subset \cU$. 
	
	Now for this $\e$, take $m$ hyperbolic matrices $B_1,\dots,B_m$ which are $\e$ close to $Dg_{p_1},\dots, Dg_{p_m}$, and such that $index(B_j)=\text{dim}E^{ss}_A+j$. Notice that we can always have these matrices since $Dg_{p_j}|_{E^{ws}_g}=\textit{Id}$ for every $j=1,\dots,m$. In the same way we can take $l$ hyperbolic matrices $C_1,\dots,C_l$ which are $\e$ close to $Dg_{q_1},\dots, Dg_{q_l}$, and such that $index(C_j)=\text{dim}E^{ss}_A+m+j$. 
	
	By applying Franks Lemma \cite{Fr1} once again, we know there is a partially hyperbolic diffeomorphism $f \in \cU \cap \cPH_A(\T^n)$ such that:
	\begin{itemize}
		\item $f(x)=g(x)=Ax$ for every $x\in \left( U_1 \cup \dots \cup U_m \cup V_1 \cup \dots \cup V_l\right)^c$.
		\item $f(p_j)=p_j$ for every $j=1,\dots,m$.
		\item $f(q_j)=q_j$ for every $j=1,\dots,l$.
		\item $Df_{p_j}=B_j$ for every $j=1,\dots,m$.
		\item $Df_{q_j}=C_j$ for every $j=1,\dots,l$.
	\end{itemize}
	In particular, we have $k+1$ fixed points (we are including 0 here) with indexes going from $\text{dim}E^{ss}_A$ to $\text{dim}E^{ss}_A+k$. 
	
	To end the proof we have to mix the center subbundles $E^{ws}_m$ and $E^{wu}_1$. To do this, we just have to take another different fixed point $p$ and apply the same isotopy as in Lemma \ref{lemmixsub}. This way the splitting of $f$ is not coherent with the hyperbolic splitting of $A$. If we want to make the entire center undecomposable, we can take extra fixed points and make suitable local perturbations by adding complex eigenvalues. We thus obtain our example and we finish the proof of Theorem \ref{teoejemixed}.

	\subsection{Another kind of examples}
	In this subsection we are going to present two additional examples of $C^1$ robustly transitive partially hyperbolic diffeomorphisms. These examples are in a sense similar to the ones we saw in Theorem \ref{teoejemixed} but with a different flavor.
	
	We begin by introducing an example which originally appears in \cite{L} (see also \cite{CeLe}), that will be used in the construction of both $C^1$ robustly transitive partially hyperbolic diffeomorphisms.
	
	\subsubsection{Example 0} \label{Ex0}
	Let $A$ be the hyperbolic matrix
	$A=\begin{bmatrix} 
		2 & 1 \\ 1 & 1
	\end{bmatrix}.$
	Take $c\in [0,1]$ and consider the family of diffeomorphisms $f_c:\T^2\to \T^2$ given by
	$$f_c(x,y)=\left(2x+y-\frac{c}{2\pi}\sin(2\pi x)\,,\,x+y-\frac{c}{2\pi}\sin(2\pi x)\right).$$
	The differential of $f_c$ at a point $(x,y)$ is equal to
	$$Df_c=\begin{bmatrix} 
		2-c\cos(2\pi x) & 1 \\ 1-c\cos(2\pi x) & 1
	\end{bmatrix}.
	$$
	Notice that $det(Df_c)=1$ for every $c\in [0,1]$ and every $(x,y)\in \T^2$, therefore $f_c$ is a conservative diffeomorphism for every $c\in [0,1]$. We also observe that the trace $tr(Df_c)=3-c\cos(2\pi x)>2$ if $c<1$, which implies $f_c$ is Anosov (notice in particular that $f_0=A$) and by the structural stability of Anosov diffeomorphisms, $f_c$ is conjugated to $A$ . 
	
	When $c=1$ the map $f_1$ is not uniformly hyperbolic since for $x=0$ we have
	\begin{equation}\label{matrixJ}
		Df_1=\begin{bmatrix} 
			1 & 1 \\ 0 & 1
		\end{bmatrix}
	\end{equation}
	and therefore every point of the form $(0,y)$ has a non-hyperbolic differential, and moreover does not admit any invariant subbundle. For points $(x,y)$ with $x\neq 0$, $Df_1$ is a hyperbolic matrix since the trace is bigger than 2. 
	
	To sum up, $f_1$ is a conservative non-Anosov diffeomorphism on $\T^2$. Moreover by taking the Lyapunov function $V:\R^2\times\R^2 \to \R$ given by
	$$ V((x_1,y_1),(x_2,y_2))=-(y_2-y_1)((y_2-x_2)-(y_1-x_1))
	$$ it is easy to see that 
	$$\Delta V((x_1,y_1),(x_2,y_2))=(x_2-x_1)^2+(y_2-y_1)^2-(x_2-x_1) \left( \frac{\sin (2\pi x_2)-\sin (2\pi x_1)}{2\pi} \right)
	$$ which is positive by the mean value theorem, except perhaps at finite points. By taking $\gamma>0$ sufficiently small (to remove these finite points), we have that $\Delta V((x_1,y_1),(x_2,y_2))>0$ if $0<d((x_1,y_1),(x_2,y_2))\leq \gamma$ and by Proposition \ref{LemaLew} we know $f_1$ is $\gamma$-expansive. 
	
	Finally notice that if $c\to 1$ then $f_c \to f_1$ in the $C^1$ topology, and then by Corollary 6.2 in \cite{L} we have that $f_1$ is conjugated to $f_c$ for any $c$ sufficiently close to 1. Since $f_c$ is conjugated to $A$ for any $c<1$, we conclude $f_1$ is conjugated to $A$. 
	
	\subsubsection{Example 1}
	Take an Anosov diffeomorphism with two fixed points, for example $A^2:\T^2\to \T^2$ where $A$ is the matrix in \textit{Example 0}. Denote by $p_1$ and $q_1$ the two fixed points of $A^2$. Now for $i=1,2$ take continuous functions $c_i:\T^2 \to \R$ such that:
	\begin{itemize}
		\item $0\leq c_i(z) \leq 1$ for every $z\in \T^2$.
		\item $c_1(z)=1$ if and only if $z=p_1$.
		\item $c_2(z)=1$ if and only if $z=q_1$.
	\end{itemize} 	
	Then we define the map $F:\T^2 \times \T^2 \times \T^2 \to \T^2 \times \T^2 \times \T^2$ by
	$$F(z_1,z_2,z_3)=(A^2z_1,f_{c_1(z_1)}(z_2),f_{c_2(z_1)}(z_3))
	$$
	where the maps $f_c$ come from \textit{Example 0} in \ref{Ex0}. Then the differential of $F$ at a point $(z_1,z_2,z_3)$ is equal to
	\begin{equation*}
		DF=\begin{bmatrix}
			A^2 & 0 & 0 \\ 
			\frac{\partial}{\partial z_1} f_{c_1^{}(z_1)}  & Df_{c_1^{}(z_1)} & 0 \\ 
			\frac{\partial}{\partial z_1} f_{c_2^{}(z_1)} & 0 & Df_{c_2^{}(z_1)}\\ 
		\end{bmatrix}.
	\end{equation*}
	If we take $\frac{\partial}{\partial z_1} f_{c_1^{}(z_1)}$ and $\frac{\partial}{\partial z_1} f_{c_2^{}(z_1)}$ sufficiently small (by taking $c_1$ and $c_2$ sufficiently small), then $F$ is a conservative partially hyperbolic diffeomorphism with center leaves $\cW^c_F(z_1,z_2,z_3)=\{z_1\}\times \T^2\times \T^2$. 
	The points $P=(p_1,0,0)$ and $Q=(q_1,0,0)$ are fixed by $F$ and they are not hyperbolic since
	$$DF_P^{}=\begin{bmatrix}
		A^2 & 0 & 0 \\ 
		\frac{\partial}{\partial z_1} f_{c_1^{}(p_1)}  & Df_1^{}   & 0 \\ 
		\frac{\partial}{\partial z_1} f_{c_2^{}(p_1)} & 0 &  Df_{c_2^{}(p_1)}\\ 
	\end{bmatrix}\ \ \ \ \text{and} \ \ \ \  
	DF_Q^{}=\begin{bmatrix}
		A^2 & 0 & 0 \\ 
		\frac{\partial}{\partial z_1} f_{c_1^{}(q_1)}  & Df_{c_1^{}(q_1)} & 0 \\ 
		\frac{\partial}{\partial z_1} f_{c_2^{}(q_1)} & 0 & Df_1^{} \\ 
	\end{bmatrix}
	$$ 
	where $Df_1^{}$ is like in \eqref{matrixJ}.	Moreover, the region of the manifold where hyperbolicity fails is contained in $\{p_1,q_1 \} \times \T^2\times \T^2$. Now notice that the strong stable/unstable bundles are almost horizontal. Then every strong stable/unstable leaf of sufficiently large (and uniform) length, will be transversal to the set $\{p_1,q_1 \} \times \T^2\times \T^2$, and then as in the proof of  Lemma \ref{DAesSH}, for small neighborhoods $U_P^{}$ and $U_Q^{}$ of $P$ and $Q$ respectively, we get points in unstable leaves that never enters $U_{P}^{}\cup U_Q^{}$ for the future, and points in stable leaves that never enters $U_{P}^{}\cup U_Q^{}$ to the past. This implies that $F$ has the SH-Saddle property of index $(2,2)$.
	
	Then as we did in the previous examples, we can find a suitable Lyapunov function to see that $F$ is expansive and conjugated to $(A^2,A,A)$ which implies $\Lambda(F)=0$ (see \cite{L} where a similar example is constructed). Then by Theorem \ref{teorobtran} (or Corollary \ref{coroSHtransitivo}) we know $F$ is $C^1$ robustly transitive. If we want to change the indexes of $P$ and $Q$, we can just apply Franks lemma \cite{Fr1} as we already showed. 
	
	
	\subsubsection{Example 2}
	For $v\in \R^2$ let $T_v:\R^2\to \R^2$ be the translation $T_v(w)=w+v$. Then for every $v\in \R^2$ we define $G_v:\R^2\to \R^2$ by $G_v=T_v \circ f_1 \circ T_v^{-1}$ where $f_1$ is like the \textit{Example 0} we built in Subsection \ref{Ex0}. It is easy to see that $G_v$ is $\Z^2$-invariant, and then induces a diffeomorphism $g_v:\T^2\to\T^2$. By the chain rule we have $D(g_v)_w=D(f_1)_{w-v}$ and therefore $g_v$ is conjugated to $A$. 
	
	Now we define $f:\T^2\times \T^2 \to \T^2\times \T^2$ by $f(v,w)=(A^2v,g_v(w))$.
	Then it is easy to see that $f$ is a partially hyperbolic diffeomorphism where the center leaves are the fibers $\cWc_f(v,w)=\{v\}\times \T^2$. Observe that we define $f$ by $A^2$ in the base, in order to get domination. It is not difficult to see that $f$ is conjugated to $A^2\times A$. Now we want to see that $f$ has the SH-saddle property of index $(1,1)$. To see this, observe that since $D(g_v)_w=D(f_1)_{w-v}$, the points with non-hyperbolic behavior lay on the set $$\Delta=\{(x,y,x,t)\in \T^2\times \T^2\}.$$ Then as in \textit{Example 1}, we observe that strong stable/unstable bundles are almost horizontal, and therefore, every strong stable/unstable leaf of sufficiently large (and uniform) length, will be transversal to the set $\Delta$. Then by applying the same techniques of Lemma \ref{DAesSH} we get points that never enters a small neighborhood of $\Delta$ for the past and the future respectively, and this implies the SH-Saddle property of index $(1,1)$.
	
	To sum up, $f:\T^2\times \T^2 \to \T^2\times \T^2$ is a partially hyperbolic diffeomorphism, not Anosov, expansive, conjugated to $A^2\times A$ (which implies $\Lambda(f)=0$), and with the SH-saddle property of index $(1,1)$. Then by Theorem \ref{teorobtran} (Corollary \ref{coroSHtransitivo}) $f$ is $C^1$ robustly transitive. From here, we can take any number of fixed points and to perform the same perturbations and apply Franks lemma \cite{Fr1} in order to change the indexes of fixed points. We remark that this example is different to the ones we built in Theorem \ref{teoejemixed} and \textit{Example 1}, since the points of the manifolds where hyperbolicity fails is not localized in small neighborhoods of fixed points.

		

\small
\vspace{0.5cm}
\begin{flushleft}
	\textsc{Luis Pedro Pi\~neyr\'ua}\\
	IMERL, Facultad de Ingenier\'ia\\
	Universidad de la Rep\'ublica, Montevideo, Uruguay\\
	email: \texttt{lpineyrua@fing.edu.uy} 
\end{flushleft}


\begin{thebibliography}{9}
			\bibitem{An} D. V. Anosov, Geodesic flows on closed Riemann manifolds with negative curvature, \textit{Proc. Steklov Inst. Math.}, \textbf{90} (1967), Note={English translation. Providence, R.I.:Amer. Math. Soc. 1969}
			
			\bibitem{BC} P. Berger and P. Carrasco, Non-uniformly hyperbolic diffeomorphisms derived from the standard map, \textit{Comm. Math. Phys.}, \textbf{329} (2014), 239-262.
			
			
			\bibitem{BD} C. Bonatti and L. J. D\'iaz, Persistent nonhyperbolic transitive diffeomorphisms, \textit{Annals of Mathematics}, \textbf{143} (1996), 357-396.
			
			\bibitem{BV} C. Bonatti and M. Viana, SRB measures for partially hyperbolic systems whose central direction is mostly contracting, \textit{Israel Journal of Mathematics}, \textbf{115} (2000), 157-193.
			
			\bibitem{BK} M. Bonk and B. Kleiner, Rigidity for quasi-M$\ddot{\textnormal{o}}$bius group actions, \textit{J. Diff. Geom.}, \textbf{61} (2002), 81-106.
			
			\bibitem{CaOb} P. Carrasco and D. Obata, A new example of robustly transitive diffeomorphism, \textit{Math. Research Letters}, \textbf{28} 3 (2021), 665-679.
			
			\bibitem{CeLe} M. Cerminara and J. Lewowicz, Some open problems concerning expansive systems, \textit{Rend. Istit. Mat. Univ. Trieste}, \textbf{42} (2010), 129-141.
			
			
			\bibitem{FPS} T. Fisher, R. Potrie and M. Sambarino, Dynamical coherence of partially hyperbolic diffeomorphisms of tori isotopic to Anosov, \textit{Math.Z}, \textbf{278} (2014), 149-168.
			
			\bibitem{Fr} J. Franks, Anosov diffeomorphisms, \textit{Proc. Sympos. Pure Math.}, \textbf{14} (1970), 61-93.
			
			\bibitem{Fr1} J. Franks, Necessary conditions for stability of diffeomorphisms, \textit{Transactions of the A.M.S.}, \textbf{158} 2 (1971), 301-308.
			
			\bibitem{HPS} M. Hirsch, C. Pugh and M. Shub, Invariant manifolds, \textit{Springer Lecture Notes in Math.}, \textbf{583} (1977).
			
			\bibitem{LZ} M. Leguil and Z. Zhang, $C^{r}$-prevalence of stable ergodicity for a class of partially hyperbolic systems, \textit{J. Eur. Math. Soc. (JEMS)}, \textbf{24} 9 (2022), 3379-3438.
			
			\bibitem{L} J. Lewowicz, Lyapunov functions and topological stability, \textit{Journal of Differential Equations}, \textbf{38} (1980), 192-209.
			
			\bibitem{Ma1} R. Ma\~{n}\'e, Contributions to the stability conjecture, \textit{Topology}, \textbf{17} (4) (1978), 383-396.	
			
			\bibitem{Po} R. Potrie, Partially hyperbolicity and attracting regions in 3-dimensional manifolds, PhD Thesis (2012), \url{https://arxiv.org/abs/1207.1822}
			
			\bibitem{PuSa} E. Pujals and M. Sambarino, A sufficient condition for robustly minimal foliations, \textit{Ergodic Theory $\&$  Dynamical Systems}, \textbf{26} (1) (2006), 281-289.
			
			\bibitem{Sh} M. Shub, Topological transitive diffeomorphism on {$\mathbb{T}^4$}, \textit{Lecture notes in Mathematics} (1971).
			
			
		\end{thebibliography}
\end{document}